\newtheorem{defi}{Definition}[section]
\newtheorem{thm}[defi]{Theorem}
\newtheorem{prop}[defi]{Proposition}
\newtheorem{lemma}[defi]{Lemma}
\newtheorem{ass}[defi]{Assumption}
\theoremstyle{definition}
\newtheorem{rem}[defi]{Remark}
\numberwithin{equation}{section}
\renewcommand{\div}{\operatorname{div}}
\newcommand{\one}[1]{{\bf{1}}_{#1}}
\newcommand{\1}{{\mathbbm{1}}}
\newcommand{\Lin}{\mathcal{L}}
\newcommand{\CVH}{C_{V \hookrightarrow H}}
\newcommand{\init}{\mathrm{in}}
\newcommand{\N}{\mathbb{N}}
\newcommand{\Z}{\mathbb{Z}}
\newcommand{\R}{\mathbb{R}}
\newcommand{\Oe}{\Omega_\e}
\newcommand{\Oes}{\Omega_\e^\mathrm{s}}
\newcommand{\Ge}{{\Gamma_\e}}
\newcommand{\Le}{\Lambda_\e}
\newcommand{\Yp}{{Y^*}}
\newcommand{\Ypp}{{Y^*_\#}}
\newcommand{\Yptx}{Y^*(t,x)}
\newcommand{\Ys}{{Y^\mathrm{s}}}
\newcommand{\Ysp}{{Y^\mathrm{s}_\#}}
\newcommand{\Ystx}{Y^\mathrm{s}(t,x)}
\newcommand{\Gtx}{\Gamma\tx}
\newcommand{\dd}{\, \mathrm{d}}
\newcommand{\dt}{\dd t}
\newcommand{\ds}{\dd s}
\newcommand{\dx}{\dd x}
\newcommand{\dy}{\dd y}
\newcommand{\dyx}{\dy\dx}
\newcommand{\dxt}{\dx\dt}
\newcommand{\dyxt}{\dy \dx\dt}
\newcommand{\tx}{(t,x)}
\newcommand{\xy}{(x,y)}
\newcommand{\sx}{(s,x)}
\newcommand{\txy}{(t,x,y)}
\newcommand{\stx}{(s,t,x)}
\newcommand{\stxy}{(s,t,x,y)}
\newcommand{\e}{\varepsilon}
\newcommand{\ue}{u_\e}
\newcommand{\ve}{v_\e}
\newcommand{\pe}{p_\e}
\newcommand{\we}{w_\e}
\newcommand{\qe}{q_\e}
\newcommand{\un}{u_0}
\newcommand{\vn}{v_0}
\newcommand{\qO}{q_1}
\newcommand{\hue}{\hat{u}_\e}
\newcommand{\hve}{\hat{v}_\e}
\newcommand{\hpe}{\hat{p}_\e}
\newcommand{\hwe}{\hat{w}_\e}
\newcommand{\hqe}{\hat{q}_\e}
\newcommand{\hQe}{\hat{Q}_\e}
\newcommand{\Qe}{Q_\e}
\newcommand{\fe}{f_\e}
\newcommand{\pbe}{p_{\mathrm{b},\e}}
\newcommand{\vGe}{v_{\Gamma_\e}}
\newcommand{\vein}{v_\e^\init}
\newcommand{\wein}{w_\e^\init}
\newcommand{\hfe}{\hat{f}_\e}
\newcommand{\hpbe}{\hat{p}_{\mathrm{b},\e}}
\newcommand{\hvGe}{\hat{v}_{\Gamma_\e}}
\newcommand{\hvein}{\hat{v}_\e^\init}
\newcommand{\hwein}{\hat{w}_\e^\init}
\newcommand{\hvGn}{\hat{v}_{\Gamma}}
\newcommand{\hun}{\hat{u}_0}
\newcommand{\hvn}{\hat{v}_0}
\newcommand{\hv}{\hat{v}}
\newcommand{\hq}{\hat{q}}
\newcommand{\hqO}{\hat{q}_1}
\newcommand{\hpO}{\hat{p}_1}
\newcommand{\hQ}{\hat{Q}}
\newcommand{\hqb}{\hat{q}_b}
\newcommand{\vnin}{v_0^\init}
\newcommand{\vGn}{v_{\Gamma}}
\newcommand{\pb}{p_b}
\newcommand{\pbO}{p_{b,1}}
\newcommand{\hpb}{\hat{p}_b}
\newcommand{\hpbO}{\hat{p}_{b,1}}
\newcommand{\hp}{\hat{p}}
\newcommand{\psie}{\psi_\e}
\newcommand{\psiem}{\psi_\e^{-1}}
\newcommand{\Pe}{\Psi_\e}
\newcommand{\Ae}{A_\e}
\newcommand{\Je}{J_\e}
\newcommand{\Pem}{\Psi_\e^{-1}}
\newcommand{\PemT}{\Psi_\e^{-\top}}
\newcommand{\PeT}{\Psi_\e^{\top}}
\newcommand{\Jem}{J_\e^{-1}}
\newcommand{\AeT}{A_\e^{\top}}
\newcommand{\Aem}{A_\e^{-1}}
\newcommand{\AemT}{A_\e^{-\top}}
\newcommand{\psin}{\psi_0}
\newcommand{\psinm}{\psi_0^{-1}}
\newcommand{\Pn}{\Psi_0}
\newcommand{\An}{A_0}
\newcommand{\Jn}{J_0}
\newcommand{\Pnm}{\Psi_0^{-1}}
\newcommand{\PnmT}{\Psi_0^{-\top}}
\newcommand{\PnT}{\Psi_0^{\top}}
\newcommand{\Anm}{A_0^{-1}}
\newcommand{\Jnm}{J_0^{-1}}
\newcommand{\intT}{\int\limits_{(0,T)}}
\newcommand{\intO}{\int\limits_{\Omega}}
\newcommand{\intOe}{\int\limits_{\Oe}}
\newcommand{\intY}{\int\limits_{Y}}
\newcommand{\intYp}{\int\limits_{\Yp}}
\newcommand{\intYptx}{\int\limits_{\Yptx}}
\newcommand{\intOYp}{\intO \intYp}
\newcommand{\intTOY}{\intT \intO \intY}
\newcommand{\intTOYp}{\intT \intO \intYp}
\newcommand{\intTOYptx}{\intT \intO \intYptx}
\newcommand{\intTOe}{\intT \intOe}
\newcommand{\intTO}{\intT \intO}
\newcommand{\hzi}{\hat{\zeta_i}}
\newcommand{\Ke}{K_\e}
\newcommand{\tss}[1]{\xrightarrow[]{\makebox[0.5cm]{$#1$}}	\hspace{-0.7cm}\xrightarrow[]{\makebox[0.6cm]{}}}
\newcommand{\tsw}[1]{{\xrightharpoonup[]{\makebox[0.5cm]{$#1$}}\hspace{-0.7cm}\xrightharpoonup[]{\makebox[0.6cm]{}}}}\usepackage{enumitem}
\title[Darcy law with memory for evolving microstructure]{A Darcy law with memory by homogenisation for evolving microstructure}
\date{February 2024}
\author{David Wiedemann}
\address{Department of Mathematics\\
Technical University of Dortmund\\
Germany}
\email{david.wiedemann@math.tu-dortmund.de}
\address{(D.W.) 
	Institute of Mathematics, University of Augsburg,
	86135 Augsburg, Germany.
	Current address: Department of Mathematics\\
	Technical University of Dortmund, Vogelpothsweg 87, 44227 Dortmund, Germany.}
	\email{david.wiedemann@math.tu-dortmund.de}
\thanks{The research of D.W. was partially supported by a doctoral scholarship provided by the
	Studienstiftung des deutschen Volkes}
\thanks{We would like to thank Dr.~Christoph Zimmer for helpful discussions and comments on differential--algebraic equations.}
\author{Malte A. Peter}
\address{(M.A.P.) Institute of Mathematics, University of Augsburg and
	Centre for Advanced Analytics and Predictive Sciences, University of Augsburg,
	86135 Augsburg, Germany.}
\email{malte.peter@math.uni-augsburg.de}
\thanks{}
\keywords{Homogenisation, Stokes equations, evolving microstructure, Darcy law with memory, two-scale transformation method}
\subjclass[2020]{76M50, 35B27, 76S05, 35R37, 76D07}
\begin{document}
\bibliographystyle{amsplain}
\begin{abstract}
We consider the homogenisation of the instationary Stokes equations in a porous medium with an a-priori given evolving microstructure.
In order to pass to the homogenisation limit, we transform the Stokes equations to a domain with a fixed periodic microstructure.
The homogenisation result is a Darcy-type equation with memory term and has the form of an integro-differential equation. The evolving microstructure leads to a time and space dependent permeability coefficient and the local change of the porosity causes an additional source term for the pressure.
\end{abstract}
\maketitle
\section{Introduction}
Understanding the behaviour of fluid flow in complex porous media or heterogeneous materials is crucial in various scientific and engineering disciplines such as materials science, chemical engineering and geophysics. In many practical scenarios, the porous medium exhibits a heterogeneous microstructure that evolves over time due to processes such as phase transitions, chemical reactions or mechanical deformation. The prediction of flow properties in such evolving microstructures poses significant challenges, necessitating advanced mathematical models.

The Stokes equations govern the motion of a viscous fluid. They have been extensively studied in the context of flow through porous media by means of homogenisation. So far most of the homogenisation results are derived for fixed microstructure. 

\subsection*{Goal of this work}
In this work, we consider the homogenisation of the instationary Stokes equations in a porous medium with evolving microstructure at small Reynolds number.
We consider a time interval $(0,T)$ for $T >0$. Let $d \in \N$, we denote the $\e$-scaled pore space at time $t\in (0,T)$ by $\Oe(t) \subset \R^d$. We denote the interface of the pore space with the solid matrix domain at $t \in (0,T)$ by $\Ge(t)$ and the boundary of the pore space at the outer boundary at $t \in (0,T)$ by $\Lambda_\e(t)$.
Such evolving geometry is illustrated in Figure \ref{fig:GeometryTwoDifferentTimePoints}.
\begin{figure}[h]	
	\centering
	\includegraphics[width=0.8\linewidth]{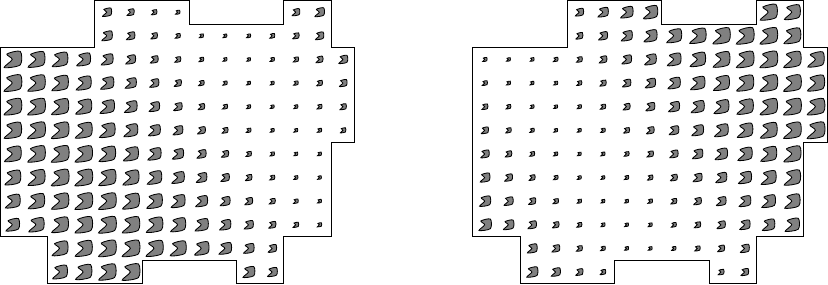} 
	\caption{Illustration of amicroscopically evolving geometry at two different points in time}
	\label{fig:GeometryTwoDifferentTimePoints}
\end{figure}

At the interface $\Ge(t)$, we assume a non-homogeneous Dirichlet boundary condition with given boundary values $\vGe$, which can model a no-slip boundary condition for the evolving domain.
At the outer boundary $\Le(t)$ of the porous medium, we assume a normal stress boundary condition with normal stress $\pbe$, which models fluid in- and outflow.

Let $\mu>0$ be the fluid viscosity, $\fe$ the density of the bulk force and $\nu$ the unit outer normal vector of $\Oe$. 
We consider the fluid velocity $\ve$ and the pressure $\pe$ as the solution of:
\begin{subequations}\label{eq:Strong:Stokes:Eps}
\begin{align}\label{eq:Strong:Stokes:Eps:1}
\partial_t \ve - \mu \e^2 \div( \nabla \ve + (\nabla \ve)^\top) + \nabla \pe &= \fe && \textrm{in } \Oe(t), t\in (0,T)\, ,
\\\label{eq:Strong:Stokes:Eps:2}
\div \ve &= 0 && \textrm{in } \Oe(t), t\in (0,T) \, ,
\\\label{eq:Strong:Stokes:Eps:3}
\ve &= \vGe && \textrm{on } \Ge(t), t\in (0,T) \, ,
\\\label{eq:Strong:Stokes:Eps:4}
\left( -\mu \e^2 \div( \nabla \ve + (\nabla \ve)^\top) + \pe \1 \right) \nu &= \pbe && \textrm{on } \Le(t), t\in (0,T) \, ,
\\\label{eq:Strong:Stokes:Eps:5}
\ve(0) &= \vein && \textrm{in } \Oe(0) \, .
\end{align}
\end{subequations}

We show that the extension of the fluid velocity $\ve$ by zero and some extension of the pressure $\pe$ converges weakly as $\e \to 0$ to the solution $(v,p)$ of the Darcy-type law with memory \eqref{eq:Strong:DarcyMemory}.
The effective equations are defined on the macroscopic limit domain $\Omega$. The domain $\Omega$ is approximated by $\Oe(t)$ in the sense that $\Omega$ is the interior of the support of the weak limit of the characteristic function of $\overline{\Oe(t)}$, i.e.~$\Omega = \operatorname{int}\left(\operatorname{supp}\left( \one{\overline{\Oe(t)}} \right)\right)$, where $\one{U}$ denotes the characteristic function for a measurable set $U$. A precise definition of $\Omega$ is given below.

\subsection*{Homogenisation result}
The homogenisation result is a Darcy-type law with memory and is given by the following integro--differential equation:
\begin{subequations}\label{eq:Strong:DarcyMemory}
\begin{align}\label{eq:Strong:DarcyMemory:1}
v\tx &=a^\init\tx + \frac{1}{\mu}\int\limits_0^t K\stx (f\sx -\nabla p\sx) \ds && \textrm{in } (0,T) \times \Omega \,,
\\\label{eq:Strong:DarcyMemory:2}
\div (v) &= -\frac{\dd}{\dt} \Theta && \textrm{in } (0,T) \times \Omega\, ,
\\\label{eq:Strong:DarcyMemory:3}
p &= \pb && \textrm{on } (0,T) \times \partial \Omega\, ,
\end{align}
\end{subequations}
The permeability-type coefficient $K$ and the initial velocity $a^\init$ can be computed by means of the solutions of the cell problems \eqref{eq:Strong:CellProblem:d} and \eqref{eq:Strong:CellProblem:init}.
In \eqref{eq:Strong:DarcyMemory:2}, the right-hand side of the divergence condition is formulated for the case of a no-slip boundary condition at the fluid--solid interface in the microscopic model, i.e.~for the case that the velocity $\vGe$ and thus the fluid velocity at the interface $\Oe(t)$ is equal the velocity of the interface. For this model, we can simplify the right-hand side of \eqref{eq:Strong:DarcyMemory:2} to $ -\frac{\dd}{\dt} \Theta$, where $\Theta$ is the porosity of the local reference cell $\Yptx$ at the macroscopic porsition $x \in \Omega$ at time $t\in (0,T)$. For a general velocity field $\hvGe$, the right-hand side depends on its two-scale limit and is formulated in \eqref{eq:Strong:DarcyMemory:GenDirich}.

The cell problems are defined on the local evolving reference cells $\Yptx$, where $\Yptx\subset (0,1)^d$ is given by the two-scale limit of $\Oe(t)$ in the sense that $\one{\Oe(t)}(x)$ two-scale converges to $\one{\Yptx}(y)$, where the periodic extension of $\Yptx$ is for a.e.~$\tx \in \Omega \times (0,T)$ a Lipschitz set.

The permeability tensor $A\stx$ is defined for a.e.~$\tx \in (0,T) \times \Omega$ and every $s \in(0,t)$ and $i,j \in\{1,\dots, d\}$ by
\begin{align}\label{eq:def:K}
K_{ji} \stx \coloneqq \int\limits_{\Yptx} \zeta_i\stxy\cdot e_j \dy \,,
\end{align}
where $(\zeta_i(s,x,t,y),\pi_i(s,x,t,y))$ for $i \in\{1, \dots, d\}$ are the solutions of the cell problems \eqref{eq:Strong:CellProblem:d}. The parameters $(s,x) \in (0,T) \times \Omega$ denote the initial time for the cell problem and the macroscopic position, respectively. 
\begin{subequations}\label{eq:Strong:CellProblem:d}
\begin{align}
	\partial_t \zeta_i - \Delta_{yy} \zeta_i + \nabla_y \pi_i &= 0 && \textrm{in } \Yptx, \, t \in (s,T) \, ,
	\\
	\div_y \zeta_i &= 0 && \textrm{in } \Yptx, \, t \in (s,T) \, ,
 \\
 \zeta_i &= 0 && \textrm{on } \Gtx, \, t \in (s,T) \, ,
	\\
	\zeta_i &= e_i && \textrm{in } Y^*(s,x)\, .
\end{align}
\end{subequations}
The initial value $a^\init$ is given by
\begin{align}\label{eq:initialValue:Darcy}
a^\init\tx \coloneqq \int\limits_{\Yptx} \zeta_0\tx \dy,
\end{align}
where $(\zeta_0(x,t,y),\pi_0(x,t,y))$ is the solution of the following cell problem \eqref{eq:Strong:CellProblem:init}:
\begin{subequations}\label{eq:Strong:CellProblem:init}
\begin{align}
	\partial_t \zeta_0 - \mu\Delta_{yy} \zeta_0 + \nabla_y \pi_0 &= 0 && \textrm{in } \Yptx, \, t \in (0,T) \, ,
	\\
	\div_y \zeta_0 &= 0 && \textrm{in } \Yptx, \, t \in (0,T) \, ,
    \\
    \zeta_0 &= 0 && \textrm{on } \Gtx, \, t \in (0,T) \, ,
	\\
	\zeta_0 &= v_0^\init && \textrm{in } Y^*(0,x) \, 
\end{align}
\end{subequations}
and $v_0^\init$ is the two-scale limit of the initial values $\vein$ of the Stokes problem.

\subsection*{Homogenisation approach}
In order to homogenise \eqref{eq:Strong:Stokes:Eps}, we transform the evolving domain on a periodically perforated fixed reference domain.
We homogenise the resulting substitute equations on this substitute domain. This leads to two-pressure Stokes equations in the (time cylindrical) two-scale substitute domain. Separating the microscopic and macroscopic spatial variable leads to a Darcy law with memory for evolving microstructure complemented by cell problems. 
We transform the two-pressure Stokes equations, the Darcy law with memory for evolving microstructure and the associated cell problems back to the evolving local reference cell. This leads in particular to the transformation-independent homogenisation result \eqref{eq:Strong:DarcyMemory}.
This approach is illustrated in \ref{fig:TwoScaleTrafoMethod}.
\begin{figure}[h]	
	\centering
	\includegraphics[width=1.\linewidth]{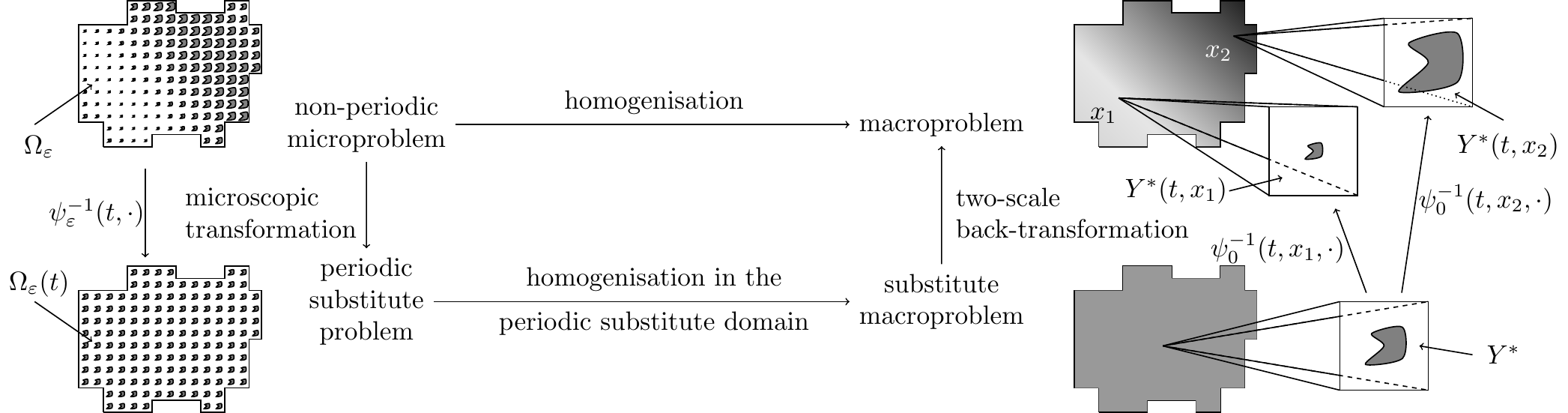} 
	\caption{Two-scale transformation method}
	\label{fig:TwoScaleTrafoMethod}
\end{figure}

In \cite{AA23} it is shown that the transformation and homogenisation commute and, thus, the Darcy law with memory for evolving microstructure \eqref{eq:Strong:DarcyMemory} is also the limit result for the Stokes equations \eqref{eq:Strong:Stokes:Eps}.

\subsection*{Literature overview}
Based on the results of experiments, Darcy presented a fundamental principle of fluid mechanics in porous media \cite{Dar56}. Darcy's law states that the rate of flow through porous media is directly proportional to the negative hydraulic gradient and inversely proportional to the viscosity of the fluid with the permeability coefficient as proportionality factor.
It can be derived mathematically by means of homogenising the (Navier--)Stokes equations in a perforated domain. In particular, this mathematical approach provides a better understanding of the effects of the microscopic geometry on the permeability coefficient. First upscaling approaches used formal two-scale asymptotic expansion and are presented in \cite{Kel80, Lio81, San80}.

The main difficulty in the rigorous homogenisation of the Stokes equations lies in the uniform a-priori estimate of the pressure. Tartar overcame this problem by constructing a restriction operator \cite{Tar80} and provided a rigorous proof of the homogenisation procedure. This operator was extended by Allaire to allow the homogenisation in the case where the solid space of the porous medium is also connected \cite{All89}. A modification of this restriction operator \cite{LA90} allowed the consideration of different boundary conditions at the pore interfaces. Furthermore, an extension of the restriction operator from $H^1$ to $W^{1,p}$ integrability enables the homogenisation of the Navier--Stokes equations \cite{Mik91}.
A different approach for the derivation of the a-priori estimates was presented by Zhikov in \cite{Zhi94}, who constructed a family of $\e$-scaled operators, which are right-inverses of the divergence operator. In particular, these operators enable a construction of a restriction operator in the sense of \cite{Tar80} with weaker estimates, which are still sufficient in order to show the strong convergence of the pressure \cite{Mik98}. For the construction of these right-inverse divergence operators, the extension operators of \cite{ACM+92} are used. In particular, such $\e$-scaled right-inverse operators become useful for the homogenisation of the compressible (Navier--)Stokes equations \cite{Mas02} or in our case, where the domain evolution motivates inhomogeneous Dirichlet boundary conditions leading to an inhomogeneous divergence condition.
While these works considered Dirichlet or periodic boundary conditions at the boundary of the macroscopic domain, the case of normal stress boundary conditions is considered in \cite{FMW17}.

The upscaling of the instationary Stokes equations was first studied by formal two-scale asymptotic expansion in \cite{Lio81} and rigorous homogenisation results are proven in \cite{All92a} and \cite{Mik94}. The result is a Darcy law with memory, which is an integro-differential equation and can be approximated for large times and constant force by the classical Darcy law \cite{Mik94}.
However, the $\e$-scaling of the viscosity becomes crucial and, for different scaling, the time derivative can vanish in the homogenisation limit leading directly to the stationary Darcy equation \cite{Mik91}.

The above-mentioned works considered the case where the porosity remains constant for $\e \to 0$. For the case of isolated obstacles it is possible to scale the obstacles asymptotically smaller than the periodicity size $\e$, i.e.~the obstacles are of size $\e^{\alpha}$ for $\alpha>1$ \cite{All91a, All91b}. The homogenisation result depends on the exact value of $\alpha$ and leads for asymptotically small obstacles to the Stokes equations itself, for critically scaled obstacles to the Brinkman equation and for asymptotically large obstacles to a Darcy law. The permeability tensor for the Darcy law differs from the case of obstacles of size $\e$, see \cite{All91d}.

The above mentioned homogenisation results deal with the case of a fixed microstructure. For an evolving domain, the quasi-stationary Stokes equations are homogenised in \cite{JDE24}. There the geometrical setting is the same as in the work presented here, but the Stokes equations are considered without the time-derivative term.

The consideration of an evolving microstructure is motivated by many different physical, chemical and biological applications. For example, for dissolution and precipitation in a porous medium, a precipitate layer may be added to or be dissolved from the pore walls, implying that the overall solid part (and, implicitly, the void space) is evolving.
In \cite{Noo08, RNF12, REK15, BBPF16, RRP16, SK17, SK17a, BVP20}, such processes are modelled as free boundaries by means of a level-set function or phase-field approaches.
However, these models are only formally upscaled by asymptotic two-scale expansions.
For fixed microstructure, related (advection–) diffusion problems are homogenised rigorously in \cite{CDT03, MP05, MZ11}.

For given evolving microstructure evolution, rigorous homogenisation results are presented in \cite{Pet07, Pet07a, Pet09, EM17, GNP21, JDE24}. There, the equations are transformed to a fixed microstructure and the resulting substitute problems are homogenised. 
For a general class of transformations, it was shown in \cite{AA23} that the homogenisation and the transformation commutes, which justifies this transformation approach. Moreover, it was shown how the two-scale limit equations and the cell problems can be transformed back into a transformation-independent limit result.
In \cite{JDE24}, the quasi-stationary Stokes equations for evolving microstructure are homogenised. There, as also in this work, the transformation to the fixed periodically perforated substitute domain leads to transformation matrices in the symmetrised gradient in the substitute equations. A uniform Korn-type inequality for those two-scale transformed symmetric gradients is derived in \cite{JDE24}. We use this Korn-type inequality  also for the derivation of the a-priori estimates here.
This two-scale transformation approach was also used in \cite{GP23, NA23} for the rigorous homogenisation of an reaction--diffusion problem with free boundary where the evolution of the domain is coupled with the unknown concentration.

The homogenisation of fluid flow in evolving porous media is also important for problems in poroelasticity. The first linear theory was developed by Biot (cf.~\cite{B55,B56}). Starting with a description of the microporomechanics by equations of elasticity and fluid flow, effective equations can also be derived by means of homogenisation (cf.~\cite{L79}, \cite{BK81}). However, in order to homogenise rigorously, the Stokes problem was linearised by assuming that the fluid domain is constant in time (cf.~\cite{Mei14}). 
Recently, the corresponding non-linear model received considerable attention (cf.~\cite{BPE14, CBH17, MP21}). However, these works passed to the homogenisation limit only formally. In this paper, we provide a rigorous homogenisation result for the decoupled Stokes-problem, which is a step towards the homogenisation of the fully coupled fluid--structure interaction problem.

\subsection*{Organisation of this paper}
This paper is organised as follows:
In Section \ref{sec:Eps-scaled-Problem}, we formulate the $\e$-scaled problem, the instationary Stokes equations in the evolving domain. We present the assumptions on the domain and its evolution by means of the periodically perforated reference domain and the existence of transformation mappings.
In Section \ref{sec:Trafo-eps-scaled}, we transform the Stokes equations to the substitute domain.
For the resulting substitute problem, we show the existence and uniqueness of a solution as well as uniform a-priori estimates in Section \ref{sec:Existence:eps-scaled}.
Having the a-priori estimates, we can pass to the homogenisation limit $\e \to 0$ for the substitute equations in Section~\ref{sec:Limit-process}. This leads to a system of two-pressure Stokes equations in the in time cylindrical two-scale domain.
In Section \ref{sec:DarcySubst}, we separate the micro- and macroscopic spatial variable in the limit equations and derive a Darcy law with memory with cell problems defined on the fixed substitute cell but with transformation coefficient.
We transform the two-pressure Stokes equations and the Darcy law with it cell problems back to the actual evolving domain in Section \ref{sec:BackTrafo}. The result is the Darcy law with memory for evolving microstructure \eqref{eq:Strong:DarcyMemory}. This homogenised equation as well as the cell problems are formulated without transformation quantities in the evolving domain and, hence, are transformation-independent. 

\subsection*{Notations}
Let $d, n,m \in \N$ and $U\subset \R^d$. For a function $u \colon U \to \R$, a vector field $v \colon U\subset \R^{n}$ and a matrix-valued function $M \colon \R^{m \times n}$, we use the following notation for its derivatives. For $x \in U$, we write $\nabla u(x) \in \R^d$ for the gradient of $u$ at $x\in U$, i.e.~$(\nabla u)_i(x) \coloneqq \partial_{x_i} u(x)$, and $\partial_x u(x) \coloneqq \nabla u^\top(x)\in \R^{1 \times d}$ for its transposed. We denote the Jacobian matrix of $v$ at $x \in U$ by $\nabla^\top v(x) \coloneqq \partial_x v(x) \in \R^{n \times d}$ i.e.~$\partial_x v(x)_{ij} \coloneqq \partial_{x_j} v_i(x)$ and its transposed by $\nabla v(x) = \partial_x v^\top(x)$. 
Moreover, for $v \colon U\subset \R^{d}$, we define the divergence $\div v(x) = \sum_{i =1}^d\partial_{x_i} v_i(x)$.
For a matrix-valued function $M$, we write $\partial_x M(x) \in \R^{(m \times n) \times d}$ for its derivative at $x \in U$, i.e.~$\partial_x A(x)_{jki} \coloneqq \partial_{x_i} A_{jk}(x)$ and $\nabla A(x) \coloneqq (\partial_x A(x))^\top \in \R^{d \times (m \times n)}$, where the transposed is defined by $\nabla A(x)_{ijk} \coloneqq \partial_x A(x)_{jki} = \partial_{x_i} A_{jk}(x)$.
Moreover, for a matrix-valued function $M \colon U \to \R^{d\times n}$, we define the divergence by its columns, i.e.~$\div(A(x)) \in \R^n$ with $\div(A(x))_j \coloneqq \div((A(x)_{ij})_{i=1}^d)$. 
Having the above notations, we can define the scalar- and vector-valued Laplace operator, i.e.~for $u \colon U \to \R$ and $v \colon U \to \R^n$, we define $\Delta u \coloneqq \div \nabla u(x) = \sum\limits_{i=1}^d\partial_{x_i} \partial_{x_i} u(x)$ and $\Delta v(x) \coloneqq \div \nabla v(x) = (\sum_{i=1}^d\partial_{x_i} \partial_{x_i} v_j(x))_{j=1}^n$ for $x \in U$, which gives $(\Delta v(x))_j = \Delta v_j(x)$.

For these notations, we have the following product rules $\partial_x(uv) = v \partial_x u + u \partial_x v$, $\partial_x (uA) = a \partial_x u + u \partial_x A$, $\partial_x (Av) = v^\top \partial_x A + A \partial_x v$, $\div(uv) = u \div(v) + \nabla u \cdot v$, $\div(uA) = u \div(A) + A^\top : \nabla u$, $\div(Av) = \div(A) \cdot v + A :\nabla v$.

We write $\1$ for the identity matrix and $\operatorname{Adj}(A)$ for the adjugate matrix of $A$, i.e.~$\operatorname{Adj}(A) A = \det(A) \1$.
With the above notation for derivatives, the Piola identity is written as $\div(\operatorname{Adj}(\partial_x v)) = 0$.

We use the index $\#$ to denote the periodicity of a function space, i.e.~for a domain $U \subset (0,1)^d$, $C_\#(U)$ denotes the subset of continuous functions on $\R^n$, which are $Y$-periodic. Similarly, we write $H^1_{\#}(U)$ to indicate the periodicity. Moreover, for a $V \subset \partial U$, we write $C_V(U)$ and $H^1_V(U)$ for the restriction of functions which are zero on $V$ or have zero trace on $V$, respectively. We combine these subscripts in order to indicate the restriction to the intersection of the corresponding subsets, i.e.~$H^1_{\# V}(U) \coloneqq H^1_{\#}(U) \cap H^1_{V}(U)$. We denote by $L^2_0(U)$ the subset of $L^2(U)$ with zero mean.

We use $C>0$ as generic constant which can change during estimates but is independent of $\e$. 

\section{The $\e$-scaled problem}\label{sec:Eps-scaled-Problem}
\subsection*{Geometry}
We describe the evolution of the geometry by means of a family of time-dependent and $\e$-scaled diffeomorphisms $\psie$, which map a periodically perforated reference domain $\Oe$ onto the actual domain $\Oe(t)$ at time $t \in [0,T]$. 
We formulate the assumptions on the domains $\Oe(t)$ indirectly by means of assumptions on the reference domain $\Oe$ and the diffeomorphisms $\psi_\e$.

Let $(\e_n)_{n \in \N}$ be a positive sequence converging to zero, as for instance $\e_n = n^{-1}$. In the following we write $\e = (\e_n)_{n \in \N}$.
\subsubsection{Reference structure}
\paragraph{Macroscopic domain:}
We assume that the macroscopic domain $\Omega \subset \R^d$ is open and bounded and consists of entire $\e$-scaled cells $Y = (0,1)^d$, i.e.~let $\Ke \coloneqq \{ k \in \Z^d \mid \e (k +Y) \subset \Omega \}$, we assume that $\Omega = \operatorname{int} \left(\bigcup\limits_{k \in \Ke} \e( k + \overline{Y}) \right)$.

\paragraph{Reference pore geometry:}
We denote the open reference pore space in the periodicity cell by $\Yp \subset Y$ and its complementary solid part by $\Ys \coloneqq Y \setminus \overline{\Yp}$. We denote the periodic extensions of $\Yp$ and $\Ysp$ by $\Ypp \coloneqq \operatorname{int} \left(\bigcup\limits_{k \in \Z^d} k + \overline{\Yp} \right)$ and $\Ysp \coloneqq \operatorname{int} \left(\bigcup\limits_{k \in \Z^d} k + \overline{\Ys} \right)$, respectively. We denote the interface of the pore and solid domain by $\Gamma\coloneqq \partial \Ypp \cap \partial \Ysp \cap [0,1]^d$.

 We assume that:
\begin{itemize}
	\item $0 < |\Yp|, \, |\Ys| < 1$,
	\item $\Ypp$ and $\Ysp$ are open sets with $C^1$-boundary, which are locally located on one side of their boundary,
	\item $\Ypp$ is connected.
\end{itemize}
For a detailed discussion of these assumption see \cite{All89}.
\paragraph{The $\e$-scaled reference domains:}
The $\e$-scaled reference pore space $\Oe$, the $\e$-scaled reference solid space $\Oes$, their interface $\Ge$ and the reference outer boundary $\Le$ are given by
\begin{align*}
\Oe \coloneqq \Omega \cap \e \Ypp \, , 
\qquad 
\Oes \coloneqq \Omega \cap \e \Ysp \, ,
\qquad 
\Ge \coloneqq \Omega \cap \partial \Oe \, ,
\qquad 
\Le \coloneqq \partial \Omega \cap \partial \Oe \, .
\end{align*}

\subsubsection{Evolving microdomain}
In order to define the domains $\Oe(t)$, we use a family of mappings $\psie : [0,T] \times \Omega \to \Omega$. 
At time $t \in [0,T]$, we define the pore space $\Oe(t)$, the solid space $\Oes(t)$, their interface $\Ge(t)$ and the outer boundary $\Le(t)$ by
\begin{align*}
\Oe(t) \coloneqq \psie(t, \Oe) \, , \quad 
\Oes(t) \coloneqq \psie(t, \Oes) \, , \quad 
\Ge(t) \coloneqq \psie(t, \Ge) \, , \quad 
\Le(t) \coloneqq \psie(t, \Le) \, .
\end{align*}
We define the time--space sets by
\begin{align*}
	&\Oe^T \coloneqq \bigcup\limits_{t \in [0,T]} \{t\} \times \Oe(t) \, , \quad 
	&&\Omega_\e^{\mathrm{s}T} \coloneqq \bigcup\limits_{t \in [0,T]} \{t\} \times \Oes(t) \, , \\ 
	&\Gamma_\e^T \coloneqq \bigcup\limits_{t \in [0,T]} \{t\} \times \Ge(t) \, , \quad 
	&&\Le^T \coloneqq \bigcup\limits_{t \in [0,T]} \{t\} \times \Le(t) \, .
\end{align*}
\paragraph{Assumptions on the transformations:}

\begin{ass}[Assumptions on the transformations]\label{ass:psie}
We assume that $\psie$ has the following regularity:
\begin{enumerate}
\item[{\crtcrossreflabel{(R1)}[item:R1]}] $\psie \in C^{1}([0,T];C^2(\overline{\Omega};\R^d))$\,,
\item[{\crtcrossreflabel{(R2)}[item:R2]}] $\psie(t, \, \cdot \,)$ is a $C^2$-diffeomorphism from $\overline{\Omega}$ onto $\overline{\Omega}$ for every $t \in [0,T]$\,.
\end{enumerate}
We assume that $\psie$ satisfies the following uniform bounds:
\begin{itemize}
\item[{\crtcrossreflabel{(B1)}[item:B1]}] $\e^{l-1} \|\psie -x\|_{C^{1}([0,T]; C^l(\overline{\Omega}))} \leq C$ for $l \in \{0,1,2\}$\,,
\item[{\crtcrossreflabel{(B2)}[item:B2]}] $\det(\partial_x\psie \tx ) \geq c_J$ for all $\tx \in [0,T] \times \Omega$ and some constant $c_J >0$\,. 
\end{itemize}
For the asymptotic behaviour of $\psie$, we assume that there exists a limit function $\psin$, which satisfies the following regularity 
\begin{itemize}
\item[{\crtcrossreflabel{(L1)}[item:L1]}] $\psin \in L^\infty(\Omega; C^1([0,T];C^2(\overline{Y}; \R^d)))$\,,
\item[{\crtcrossreflabel{(L2)}[item:L2]}] $\psin(t,x,\cdot ) : \overline{Y} \to \overline{Y}$ is for every $t \in[0,T]$ and a.e.~$x\in \Omega$ a $C^2$-diffeomorphism,
\item[{\crtcrossreflabel{(L3)}[item:L3]}] the displacement mapping $y \mapsto \psin(t,x,y) -y$ can be extended $Y$-periodically i.e.~$(y \mapsto \psin(t,x,y) -y) \in L^\infty(\Omega; C^1([0,T];C^2_\#(\overline{Y}; \R^d)))$
\end{itemize}
and we assume that the following strong two-scale convergences hold
\begin{itemize}
\item[{\crtcrossreflabel{(A1)}[item:A1]}] $\e^{-1} (\psie\tx-x) \tss{2,2} \psin\txy -y$\,,
\item[{\crtcrossreflabel{(A2)}[item:A2]}] $\partial_x \psie \tss{2,2} \partial_y \psin$\,,
\item[{\crtcrossreflabel{(A3)}[item:A3]}] $\e \partial_x \partial_x \psie \tss{2,2} \partial_y \partial_y \psin$\,
\item[{\crtcrossreflabel{(A4)}[item:A4]}] $\e^{-1} \partial_t \psie \tss{2,2} \partial_t \psin$\,,
\item[{\crtcrossreflabel{(A5)}[item:A5]}] $\partial_x \partial_t \psie \tss{2,2} \partial_y \partial_t \psin$\,,
\item[{\crtcrossreflabel{(A6)}[item:A6]}] $\e\partial_x \partial_x \partial_t \psie \tss{2,2} \partial_y \partial_y \partial_t \psin$\,.
\end{itemize}
\end{ass}

The convergences in Assumption~\ref{ass:psie}\ref{item:A1}--\ref{item:A6} denote the strong two-scale convergence (see Definition \ref{def:two-scale}). Due to the uniform essential boundedness, which is given by Assumption~\ref{ass:psie}\ref{item:B1}, the strong two-scale convergences in Assumption~\ref{ass:psie}\ref{item:A1}--\ref{item:A6} hold also for arbitrary $p \in (1,\infty)$ instead of $2$.

We use the following notation for the transformation quantities:
\begin{align*}
\Pe \coloneqq \partial_x \psie \, \qquad \Je\coloneq \det(\Pe) \, \qquad \Ae \coloneqq \operatorname{Adj}(\Pe) \, ,
\\
\Pn \coloneqq \partial_y \psin \, \qquad \Jn \coloneq \det(\Pn) \, \qquad \An \coloneqq \operatorname{Adj}(\Pn) \,.
\end{align*}
We note that the above assumption ensure that $\Je \geq C_J$ and, thus, $\Pe$ is invertible and it holds $\Ae = \Je \Pem$. The uniform bound of $\Je$ from below can be transferred to $\Jn$ via the strong two-scale convergence of $\partial_x \psie$ and one gets $\Jn \geq cJ$ and, thus, also $\An = \Jn \Pnm$. 

For clarification, we note that the uniform bounds in Assumption \ref{ass:psie}\ref{item:B1}  give
\begin{align*}
\e^{-1} \| \psie -x \|_{L^\infty((0,T) \times \Omega)} + 
\|\partial_x \psie \|_{L^\infty((0,T) \times \Omega)} + 
\e \|\partial_x \partial_x \psie \|_{L^\infty((0,T) \times \Omega)} &\leq C \, ,
\\
\e^{-1} \|\partial_t \psie \|_{L^\infty((0,T) \times \Omega)} + 
\|\partial_x \partial_t\psie \|_{L^\infty((0,T) \times \Omega)} + 
\e \|\partial_x \partial_x \partial_t \psie \|_{L^\infty((0,T) \times \Omega)}& \leq C \,.
\end{align*}
\begin{rem}
The regularity assumptions on $\psie$ allow us to transform the Stokes equations in the reference domain. 
The uniform estimates on $\psie$ and its derivatives are crucial for the derivation of the uniform a-priori estimates. The asymptotic behaviour of $\psie$ ensures that the coefficients in the transformed Stokes equations strongly two-scale converge and, hence, we can pass to the homogenisation limit. Moreover, it guarantees that the homogenisation of the actual problem is equivalent to the homogenisation of the transformed problem (see \cite{AA23}). 
\end{rem}

\paragraph{Two-scale limit domain:}
The two-scale limit domain of $\Oe^T$ should not be understood as a domain in $(0,T) \times \Omega \times Y$ but rather as family of domains $\Yptx \subset Y$ with parameters $\tx \in (0,T) \times \Omega$. In particular, for our homogenisation task it is not even necessary that $\Yptx$ is defined for every $x \in \Omega$. Indeed, it suffices that it is defined for a.e.~$x\in \Omega$, where the null-set has to be chosen independent of the time $t\in [0,T]$. Nevertheless, at some points it simplifies the notation if one defines $\Yptx$ for every $x \in \Omega$ and defines the measurable set $\Omega_0^T$ as:
\begin{align*}
\Omega_0^T \coloneqq \bigcup\limits_{\tx \in (0,T) \times \Omega} \{t\} \times \{x\} \times \Yptx \,.
\end{align*}
The set $\Omega_0^T$ and the domains $\Yptx$ can be obtained by means of the two-scale convergence of the characteristic function of $\Oe^T$.
In this sense, we obtain for the reference domain $\Oe$ as two-scale limit $\Yp$ for every $t \in [0,T]$ and a.e.~$x\in \Omega$.
The two-scale limit of the characteristic function $\one{\Oe}$ is given by the function $\one{\Omega \times \Yp}$, which is an element of $L^p(\Omega \times Y)$ and, thus, it does not define the domain uniquely. 
Indeed, for a.e.~$x\in \Omega$, $\one{\Omega \times \Yp}(x, \cdot)$ provides only the domain $\Yp \setminus N_1(x) \cup N_2(x)$ up to null sets $N_1(x), N_2(x) \subset Y$.
The non-uniqueness can be addressed by requiring that for a.e.~$x \in \Omega$ the periodic extension of the domain is a Lipschitz domain and we get $\Yp = \Yp \setminus N_1(x) \cup N_2(x)$. We
address the non-uniqueness of the two-scale limit representative of $\one{\Oe(t)}$ in the same way. This provides the sets $\Yptx$ for every $t\in [0,T]$ and a.e.~$x\in \Omega$. Lemma \ref{lem:TwoScaleEquiv} shows that
$\one{(0,T) \times \Oe}\tx \tss{2,2} \one{(0,T) \times \Omega \times \Yp}\txy$ if and only if $\one{[0,T] \times \Oe}(t,x,\psiem\tx) \tss{2,2} \one{\Omega \times \Yp}(t,x,\psinm\txy)$. Thus, we can determine the two-scale limit for $\Oe(t)$ and $\Oes(t)$ by
\begin{align*} 
\Yptx &= \psin(t,x,\Yp) &&\textrm{ for every } t\in [0,T] \textrm{ and a.e.~}x\in \Omega \, ,
\\
\Ystx &= \psin(t,x,\Ys) &&\textrm{ for every } t\in [0,T] \textrm{ and a.e.~}x\in \Omega\, .
\end{align*}
Their interface, is given by
\begin{align*}
\Gtx &= \psin(t,x,\Gamma) &&\textrm{ for every } t\in [0,T] \textrm{ and a.e.~}x\in \Omega \,
\end{align*}
and we define analogously to $\Omega_0^T$ the solid region $\Omega_0^{\mathrm{s}T}$ by
\begin{align*}
\Omega_0^{\mathrm{s}T} \coloneqq \bigcup\limits_{\tx \in (0,T) \times \Omega} \{t\} \times \{x\} \times \Ystx \,.
\end{align*}

\subsection*{Weak formulation of the $\e$-scaled problem}
Having defined the geometry, we can define the weak formulation for \eqref{eq:Strong:Stokes:Eps}.

We assume that the Dirichlet boundary values $\vGe$ and the normal pressure at the outer boundary can be extended to $\Oe(t)$. We subtract these extensions from the unknowns $\ve$ and $\pe$ and define
\begin{align*}
	\we \coloneqq \ve - \vGe \, , \qquad \wein \coloneqq \vein- \vGe(0)\, ,\qquad \qe \coloneqq \pe - \pbe \, .
\end{align*}

We use this substitution in \eqref{eq:Strong:Stokes:Eps}, multiply the resulting equation by $\varphi$ and integrate it over $\Oe(t)$ and $(0,T)$. Integrating the divergence terms as well as the term with $\qe$ by parts and using the normal stress boundary condition \eqref{eq:Strong:Stokes:Eps:4} leads to the following weak form \eqref{eq:Weak:Stokes:Eps}:
Find $(\we,\qe) \in L^2(0,T;H^1_{\Ge(t)}(\Oe(t); \R^d)) \times L^2(\Oe^T)$ with $\partial_t \we \in L^2(\Oe^T;\R^d)$ such that
\begin{subequations}\label{eq:Weak:Stokes:Eps}
	\begin{align}
		\begin{aligned}\label{eq:Weak:Stokes:Eps:1}
			\int\limits_0^T &\int\limits_{\Oe(t)} \partial_t \ve \cdot \varphi + \e^2 \mu 2 \nabla^\textrm{s}\we : \nabla \varphi - \qe \div(\varphi) \dxt 
			\\
			&=
			\int\limits_0^T \int\limits_{\Oe(t)} \fe \cdot \varphi -\partial_t \hvGe \cdot \varphi - \e^2 \mu 2 \nabla^\textrm{s}\hvGe : \nabla \varphi - \nabla \pbe \cdot \varphi \dxt \, ,
		\end{aligned}
		\\
		\begin{aligned}\label{eq:Weak:Stokes:Eps:2}
		\int\limits_0^T &\int\limits_{\Oe(t)} \div (\we) \phi \dxt = - \int\limits_0^T \int\limits_{\Oe(t)} \div (\vGe) \phi \dxt \, ,
		\end{aligned}
\\
\begin{aligned}\label{eq:Weak:Stokes:Eps:3}
\we(0) &= \wein && \textrm{ in }\Oe(0) 
\end{aligned}
	\end{align}
\end{subequations}
holds for a.e.~$(\varphi, \phi) \in L^2(0,T;H^1_{\Ge(t)} (\Oe(t); \R^d)) \times L^2(\Oe^T)$.

The space $L^2(0,T;H^1_{\Ge(t)}(\Oe(t); \R^d))$ has to be understood as the subset of \linebreak $L^2(0,T;H^1(\Omega; \R^d))$
of functions which are zero in $((0,T) \times \Omega) \setminus \Oe^T$. The time-derivative in $L^2(\Oe^T)$ has to be understood in the sense that the extension of $\ve$ by $0$ to $\Omega$ is in $H^1(0,T;L^2(\Omega; \R^d))$ and $\partial_t \ve$ is zero in $((0,T) \times \Omega) \setminus \Oe^T$.

\subsection*{Assumptions on the data}
Let $\fe \in L^2(\Oe^T;\R^d)$, $\vGe \in H^1(0,T;H^1(\Omega);\R^d)$, 
$\pbe \in L^2(\Oe^T)$ with $\nabla_x \pbe \in L^2(\Oe^T;\R^d)$ and $\vein \in H^1(\Oe(0))$. We assume that the initial values $\vein$ and boundary values $\vGe$ are compatible, i.e.~${\div(\wein)) = -\div(\vGe(0))}$ and $\wein|_{\Ge(0)} =0$ for $\wein = \vein - \vGe(0)$.

We assume that the data satisfy the following uniform bounds:
\begin{subequations}\label{eq:BoundsData}
\begin{align}
	\|\fe\|_{L^2(\Oe^T)} + \|\vein\|_{L^2(\Oe(0))} + \e\|\nabla \vein\|_{L^2(\Oe(0))} &\leq C \, ,
	\\
	\e^{-1}\|\vGe\|_{L^2((0,T) \times \Omega)} + \|\partial_x \vGe\|_{L^2((0,T) \times \Omega)} + \e \|\partial_x \partial_x \vGe\|_{L^2((0,T) \times \Omega)} &\leq C \, ,
	\\
	\e^{-1}\|\partial_t \vGe\|_{L^2((0,T) \times \Omega)} + \| \partial_x \partial_t\vGe\|_{L^2((0,T) \times \Omega)} + \e \|\partial_x \partial_x \partial_t \vGe\|_{L^2((0,T) \times \Omega)} &\leq C \, .	
\end{align}
\end{subequations}
The uniform estimates for $\vGe$ and its derivatives give the uniform estimate for the trace at $t= 0$ 
\begin{align*}
\e^{-1}	\|\hvGe(0)\|_{L^2(\Omega)} + \|\nabla \hvGe(0)\|_{L^2(\Omega)} \leq C 
\end{align*}
and, thus,
\begin{align*}
\|\wein\|_{L^2(\Oe(0))} + \e\|\nabla \wein\|_{L^2(\Oe(0))} &\leq C \, .
\end{align*}

In order to state the assumptions on their asymptotic behaviour, we extend $\fe$, $\pbe$ and $\vein$ as well as their derivatives by zero to $(0,T) \times \Omega$ and $\Omega$, respectively, which we denote by $\widetilde{\cdot}$.
We assume that there exists
$f\in L^2((0,T) \times \Omega; \R^d) $, ${\vGn \in L^2(\Omega; H^1(0,T;H^2_\#(Y;\R^d)))}$, $\pb \in L^2(0,T;H^1(\Omega))$, $\pbO \in L^2((0,T) \times \Omega; H^1_\#(Y))$, $\vnin \in L^2(\Omega;H^1_{\#}(Y;\R^d)))$ with $\vnin\xy = 0$ for $y \in \Ys(0,x)$ such that
\begin{subequations}\label{eq:Two-scale-conv:data}
\begin{align}
\begin{aligned}
&\widetilde{\fe} \tsw{2,2} \one{\Omega_0^T}f \, ,
	\qquad
	\e^{-1} {\vGe} \tsw{2,2}\vGn \, ,
	\qquad
	 {\partial_x \vGe} \tsw{2,2} \partial_y \vGn \, ,
	\qquad 
	\e \partial_x \partial_x \vGe \tsw{2,2} \partial_y \partial_y \vGn \, ,
	\qquad
	\\
 &\e^{-1} \partial_t \vGe \tsw{2,2}\partial_t \vGn \, ,
	\qquad
	\partial_x \partial_t\vGe \tsw{2,2} \partial_y \partial_t \vGn \, ,
	\qquad
	 \e \partial_x \partial_x \partial_t \vGe \tsw{2,2} \partial_y \partial_y \partial_t \vGn \, ,
	\\
	&\widetilde{\pbe} \tsw{2,2} \one{\Omega_0^T} \pb\, , 
	\qquad
	\widetilde{\nabla \pbe} \tsw{2,2} \one{\Omega_0^T} (\nabla_x \pb + \nabla_y \pbO)\, , 
	\\
	&\widetilde{\vein}(x) \tsw{2,2} \one{\Yp(0,x)}(y) \vnin \xy \, ,
	\qquad
	\e \widetilde{\partial_x \vein}(x) \tsw{2,2} \one{\Yp(0,x)}(y) \partial_y \vnin \xy \, .
\end{aligned}
\end{align}

We note that $\vGe(0)$ is of order $\e$ and $\nabla \vGe(0)$ is of order $1$. Thus, their contribution in the limit of $\wein$ vanishes and we get
\begin{align*}
&\widetilde{\wein}(x) \tsw{2,2} \one{\Yp(0,x)}(y) \hat{w}^\init_0 \xy \, ,
&
&\e \widetilde{\partial_x \wein}(x) \tsw{2,2} \one{\Yp(0,x)}(y) \partial_y \hat{w}^\init_0 \xy \, 
\end{align*}
for $\hat{w}_0^\init = \hat{v}_0^\init$.
\end{subequations}

\section{Transformation of the micromodel}\label{sec:Trafo-eps-scaled}
In this section, we transform the Stokes equations to the reference domain $(0,T) \times \Oe$ by means of the Piola transformation and $\psie$. Moreover, we transform the data and the assumptions on their uniform bounds and convergence to the reference domain.
We denote the transformed quantities by $\hat{\cdot}$, i.e.~we have the transformed unknowns
\begin{align*}
&\hve\tx \coloneqq \Ae\tx\ve(t,\psie\tx) \, , 
&&\hwe\tx \coloneqq \Ae\tx \we(t,\psie\tx) \, ,
\\
&\hpe\tx \coloneqq \pe(t,\psie\tx) \, , &&\hqe\tx \coloneqq \qe(t,\psie\tx)
\end{align*}
and the transformed data
\begin{align*}
&\hfe\tx \coloneqq \fe(t,\psie\tx) \, , 
\\
&\hvGe\tx \coloneqq \Ae\tx\vGe(t,\psie\tx) \, , 
&&\hpbe\tx \coloneqq \pbe(t,\psie\tx) \, ,
\\
&\hvein (x) \coloneqq \vein(\psie(0,x)) \, ,
&&\hwein (x) \coloneqq \wein(\psie(0,x)) \, . 
\end{align*}
The multiplication by $\Aem$ becomes useful for the derivation of the existence results of the microscopic transformed problem since it avoids time-dependent coefficients in the divergence condition. Moreover, for the limit process it becomes useful since it avoids microscopically oscillating coefficients in the divergence condition.

For the transformation of the normal stress boundary condition at $\Lambda(t)$, we note the following relation between the outer unit normal vector $\nu(\psie\tx)$ of $\Oe(t)$ and the outer unit normal vector $\hat{\nu}(x)$ of the reference coordinates $\Oe$.
\begin{align*}
\| \PemT\tx \hat{\nu}(x)\|^{-1} \PemT\tx \hat\nu(x) &= \nu(\psie\tx) && \textrm{ for every } \tx \in [0,T] \times \partial \Oe \,.
\end{align*}
Transforming \eqref{eq:Strong:Stokes:Eps:1} to the reference coordinates gives \eqref{eq:Strong:Trafo:Stokes:Eps:1}--\eqref{eq:Strong:Trafo:Stokes:Eps:Def:SymGradient}.
\begin{subequations}\label{eq:Strong:Trafo:Stokes:Eps}
	\begin{align}\label{eq:Strong:Trafo:Stokes:Eps:1}
				\partial_t (\Aem\hve) - \nabla^\top(\Aem\hve) \Pem\partial_t \psie - \Jem \mu \e^2 \div(\Ae 2\hat{\nabla}^\textrm{s}_\e \hve) + \PemT\nabla \hpe &= \hfe && \textrm{in } (0,T) \times \Oe \, ,
		\\\label{eq:Strong:Trafo:Stokes:Eps:2}
		\Jem\div(\hve) &= 0 && \textrm{in } (0,T) \times \Oe \, ,
		\\\label{eq:Strong:Trafo:Stokes:Eps:3}
		\hve &= \hvGe && \textrm{on } (0,T) \times\Ge \, ,
		\\\label{eq:Strong:Trafo:Stokes:Eps:4}
		\left( -\mu \e^2 2 \hat{\nabla}^\textrm{s}_\e \hve + \hpe \1 \right)\| \PemT\tx \hat{\nu}(x)\|^{-1} \PemT \hat{\nu} = \pbe \| \PemT\tx \hat{\nu}(x)\|^{-1} &\PemT \hat{\nu} && \textrm{on } (0,T) \times \Le \, ,
		\\\label{eq:Strong:Trafo:Stokes:Eps:5}
		\hve(0) &= \hvein && \textrm{in } \Oe \, ,
		\\\label{eq:Strong:Trafo:Stokes:Eps:Def:SymGradient}
		\hat{\nabla}^\textrm{s}_\e \hve \coloneqq (\PemT \nabla (\Aem\hve) + (\PemT \nabla(\Aem\hve) )^\top)/2& && \textrm{in } (0,T) \times \overline{\Oe} \, .
	\end{align}
\end{subequations}

In order to derive the weak form, we multiply \eqref{eq:Strong:Trafo:Stokes:Eps:1} by $\PeT$, \eqref{eq:Strong:Trafo:Stokes:Eps:2} by $\Je$ and \eqref{eq:Strong:Trafo:Stokes:Eps:3} by $\| \PemT\tx \hat{\nu}(x)\|$. 
We rewrite the resulting first term of $(\PeT\eqref{eq:Strong:Trafo:Stokes:Eps:1})$ by
\begin{align*}
	\PeT(\partial_t (\Aem\hve) = \partial_t (\PeT \Aem\hve) - \partial_t \PeT (\Aem\hve)
\end{align*}
and we get
\begin{align*}
\partial_t (\PeT \Aem\hve) - \partial_t \PeT (\Aem\hve) - \PeT\nabla^\top(\Aem\hve) \Pem\partial_t \psie&
\\
 - \AeT \mu \e^2 \div(\Ae 2\hat{\nabla}^\textrm{s}_\e \hve) + \nabla \hpe &= \PeT \hfe && \textrm{in } (0,T) \times \Oe \, ,
\end{align*}
Proceeding as in the derivation of the weak form for the untransformed equation, we obtain the following weak form \eqref{eq:WeakFormEpsOperator}:

Find $(\hwe, \hqe) \in L^2(0,T;H^1_{\Ge}(\Oe; \R^d)) \times L^2((0,T) \times \Oe)$ with $\partial_t (\PemT \Aem \hwe), \partial_t \hwe \in L^2((0,T) \times \Oe; \R^d)$ such that
\begin{subequations}\label{eq:WeakForm:Eps:Trafo}
\begin{align}
\begin{aligned}\label{eq:WeakForm:Eps:Trafo:1}
\intTOe &\partial_t (\PeT \Aem \hwe) \cdot \varphi - \left( \partial_t \PeT \Aem \hwe -\PeT \nabla^\top (\Aem\hwe)\Pem \partial_t \psie \right) \cdot \varphi 
\\
&
+ \mu \e^2 \Ae 2\hat{\nabla}^\textrm{s}_\e \hwe : \nabla \varphi 
- \hqe \div(\varphi) \dxt \, 
\\
=
\intTOe& \PeT \hfe \cdot \varphi - \partial_t (\PeT \Aem \hvGe) \cdot \varphi -
 \nabla \hpbe \cdot \varphi 
\\
&
+ \left( \partial_t \PeT \Aem \hvGe -\PeT \nabla^\top (\Aem\hvGe)\Pem \partial_t \psie \right) \cdot \varphi -
 \mu \e^2 \Ae 2\hat{\nabla}^\textrm{s}_\e :\hvGe\nabla \varphi \dxt \,  ,
\end{aligned}
\\
\begin{aligned}\label{eq:WeakForm:Eps:Trafo:2}
\intTOe \phi \div(\hwe) \dxt &= - \intTOe \phi \div(\hvGe) \dxt \, ,
\end{aligned}
\\
\begin{aligned}\label{eq:WeakForm:Eps:Trafo:3}
\hwe(0) &= \hwein
\end{aligned}
\end{align}
\end{subequations}
for all $(\varphi, \phi) \in L^2(0,T;H^1_{\Ge}(\Oe;\R^d)) \times L^2(0,T;L^2(\Oe))$.

\paragraph{Transformation of the data:}
For the transformed data $\hfe$, $\hvGe$, $\hpbe$ and $\hvein$, we can transfer the uniform bounds with Lemma \ref{lem:BoundsUnderTransformation} and obtain:
\begin{lemma}[Uniform bounds of the transformed data]\label{lem:Est:Data:Trafo}
Let $\hfe$, $\hvGe$, $\hpb$ and $\wein$ by given as above. Then there exists a constant $C>0$ such that
\begin{subequations}\label{eq:BoundsData:Trafo}
	\begin{align}\label{eq:BoundsData:f+win:Trafo}
		\|\hfe\|_{L^2((0,T) \times \Oe)} + \|\hwein\|_{L^2(\Oe)} + \e\|\nabla \hwein\|_{L^2(\Oe)} &\leq C \, ,
		\\\label{eq:BoundsData:vGe:Trafo}
		\e^{-1}\|\hvGe\|_{L^2((0,T) \times \Oe)} + \|\partial_x \hvGe\|_{L^2((0,T) \times \Oe)} + \e \|\partial_x \partial_x \hvGe\|_{L^2((0,T) \times \Oe)} &\leq C \, ,
		\\\label{eq:BoundsData:dt-vGe:Trafo}
		\e^{-1}\|\partial_t \hvGe\|_{L^2((0,T) \times \Oe)} + \|\partial_x \partial_t\hvGe\|_{L^2((0,T) \times \Oe)} &\leq C \, .
	\end{align}
\end{subequations}
Moreover, $\hwein$ is compatible i.e.~$\hwein \in H^1_{\Ge}(\Oe;\R^d)$ and $\div(\hwein) = -\div(\hvGe)$. 
\end{lemma}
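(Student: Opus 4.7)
The claim has two independent parts: the uniform $L^2$-type bounds \eqref{eq:BoundsData:Trafo} on the transformed data, and the compatibility of $\hwein$. The bounds are translated from \eqref{eq:BoundsData} through the change of variables $y=\psie\tx$ and the Piola transformation using (B1)--(B2); this is essentially the content of the previously established Lemma \ref{lem:BoundsUnderTransformation}, so the plan is to identify the correct $\e$-scaling of each factor and then to invoke that lemma term by term. Compatibility rests on the Piola identity $\div(\operatorname{Adj}(\partial_x\psi)\,u\circ\psi)=\det(\partial_x\psi)\,(\div u)\circ\psi$, which transports the compatibility assumption from $\Oe(0)$ back to $\Oe$.

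\textbf{Bounds.} By (B1) with $l=1$ and (B2), $\Je$ and $\Je^{-1}$ are uniformly bounded in $L^\infty$, so $u\mapsto u\circ\psie$ is $L^2$-stable uniformly in $\e$; since also $\|\Ae\|_\infty\le C$, the zero-derivative parts of \eqref{eq:BoundsData:Trafo} follow by a direct change of variables from \eqref{eq:BoundsData}, with the $\e^{-1}$-prefactors carried over verbatim. For $\partial_x\hvGe$, the chain and product rules decompose it into terms where either a derivative hits $\Ae$ (producing $\|\partial_x\Ae\|_\infty\le C\e^{-1}$ by (B1) with $l=2$, since $\partial_x\Ae$ is linear in $\partial_x^2\psie$) or the composition $\vGe\circ\psie$ (producing at most $\|\partial_x\vGe\|_{L^2}\cdot\|\partial_x\psie\|_\infty$). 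The apparent $\e^{-1}$-blow-up is cancelled by the $\e$-gain in $\|\vGe\|_{L^2}$ from \eqref{eq:BoundsData:vGe:Trafo}, yielding $\|\partial_x\hvGe\|_{L^2}\le C$. The same bookkeeping, with one spatial derivative replaced by a time derivative, yields \eqref{eq:BoundsData:dt-vGe:Trafo} using the $\partial_t$-bounds of (B1) and \eqref{eq:BoundsData}. The delicate term is the $\e$-weighted second spatial derivative: a naive chain rule for $\partial_x^2\hvGe$ produces a factor $\partial_x^3\psie$, which is not controlled by (R1). Lemma \ref{lem:BoundsUnderTransformation} circumvents this by reorganising $\partial_x^2\hvGe$ via the cofactor structure of $\Ae$ and the Piola relation $\div\Ae=0$, so that only $\partial_x^2\psie$ (bounded by $C\e^{-1}$) and $\partial_x^2\vGe$ (bounded by $C\e^{-1}$ in $L^2$) enter; their product with the $\e$-weight is $O(1)$ in $L^2$.

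\textbf{Compatibility.} Since $\Ge(0)=\psie(0,\Ge)$ and $\wein$ has zero trace on $\Ge(0)$, the composition $\hwein$ has zero trace on $\Ge$, so $\hwein\in H^1_{\Ge}(\Oe;\R^d)$; the regularity $\hwein\in H^1(\Oe;\R^d)$ itself follows from $\wein\in H^1(\Oe(0);\R^d)$ and $\psie(0,\cdot)\in C^2$ together with the $L^2$-stability of the change of variables. For the divergence condition, the Piola identity applied to $\hwein$ and $\hvGe(0,\cdot)$ gives
\begin{align*}
\div(\hwein)(x) &= \Je(0,x)\,(\div \wein)(\psie(0,x))\,,\\
\div(\hvGe(0,\cdot))(x) &= \Je(0,x)\,(\div \vGe(0,\cdot))(\psie(0,x))\,.
\end{align*}
Combining these with the assumed identity $\div\wein=-\div\vGe(0,\cdot)$ in $\Oe(0)$ yields $\div\hwein=-\div\hvGe(0,\cdot)$ in $\Oe$, completing the proof. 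The main technical point, to be imported from Lemma \ref{lem:BoundsUnderTransformation}, remains the second-derivative bound described above; every other estimate is routine bookkeeping of $\e$-powers.
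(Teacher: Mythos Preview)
Your overall approach matches the paper's: transfer the $L^2$-bounds through the change of variables via Lemma~\ref{lem:BoundsUnderTransformation}, then handle the extra factor $\Ae$ in $\hvGe=\Ae\,\vGe\circ\psie$ by the product rule combined with the bounds on $\Ae$ and $\partial_x\Ae$ from Lemma~\ref{lem:Est:Psie}. Your compatibility argument via the Piola identity is correct and in fact more explicit than the paper, which merely states that compatibility ``is preserved under the transformation''.

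There is, however, a genuine gap in your treatment of the second-derivative term $\e\|\partial_x^2\hvGe\|_{L^2}$. You assert that Lemma~\ref{lem:BoundsUnderTransformation} ``circumvents'' the appearance of $\partial_x^3\psie$ by a reorganisation based on the Piola relation $\div\Ae=0$. This is not what that lemma does: it only establishes uniform $W^{1,q}$-equivalence under $\psie$, nothing about second derivatives. More importantly, the Piola identity kills only the divergence of $\Ae$, i.e.\ one particular contraction of $\partial_x\Ae$; it does \emph{not} control the full tensor $\partial_x^2\Ae$ that appears in the product-rule expansion of $\partial_x^2(\Ae\,\vGe\circ\psie)$. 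The paper invokes no Piola trick here; it simply writes ``we apply the product rule on $\hvGe$ and together with the estimates on $\Ae$ and its spatial derivatives given in Lemma~\ref{lem:Est:Psie}, we get \eqref{eq:BoundsData:vGe:Trafo}'', i.e.\ the same straight $\e$-bookkeeping you use for the lower-order terms, with the term $(\partial_x^2\Ae)(\vGe\circ\psie)$ compensated by the extra factor $\e$ from $\|\vGe\|_{L^2}\le C\e$. So drop the Piola-identity explanation for this step --- it does not do what you claim --- and follow the paper's plain product-rule argument.
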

\begin{proof}
The uniform estimates for $\hfe$, $\hwein$ and $\nabla \hwein$ can be deduced with Lemma~\ref{lem:BoundsUnderTransformation} and Remark \ref{rem:two-scale-trafo:FixedPointTime}.

To derive the uniform estimates on $\hvGe$, we note that the uniform estimates on $\vGe$ and Lemma~\ref{lem:BoundsUnderTransformation} provide a uniform estimate for $	\e^{-1}\|\vGe(t,\psie\tx)\|_{L^2((0,T) \times \Omega)}$, $\|\partial_x \vGe(t,\psie\tx)\|_{L^2((0,T) \times \Omega)}$ and $\e \|\partial_x \partial_x \vGe(t,\psie\tx)\|_{L^2((0,T) \times \Omega)}$\,.
We apply the product rule on $\hvGe$ and together with the estimates on $\Ae$ and its spatial derivatives given in Lemma \ref{lem:Est:Psie}, we get \eqref{eq:BoundsData:vGe:Trafo}.

In order to derive \eqref{eq:BoundsData:dt-vGe:Trafo}, we note that 
\begin{align*}
	\partial_t \hvGe\tx =& \partial_t \Ae\tx \vGe(t,\psie\tx) + \Ae\tx \partial_t \vGe(t,\psie\tx) \\
 &+ \Ae\tx \partial_x \vGe(t,\psie\tx) \partial_t \psie\tx \,
\end{align*}
and, hence, the estimates on $\Ae$ given in Lemma \ref{lem:Est:Psie} together with the estimates on $\hvGe$ provide the uniform bound on $\e^{-1}\partial_t \hvGe$.
Taking the derivative with respect to $x$ in the previous equation and employing the product rule, one can similarly deduce the uniform estimate on $\partial_x \partial_t \hvGe$.

The compatibility of the initial values, i.e.~~${\div(\hwein)) = -\div(\hvGe(0))}$ and $\hwein|_{\Ge(0)} =0$ for $\hwein = \hvein - \hvGe(0)$ is preserved under the transformation.
\end{proof}

With Lemma \ref{lem:TwoScaleEquiv}, we can also transform the two-scale convergences of the data arguing similarly as in the proof of Lemma \ref{lem:Est:Data:Trafo}. We get for $\hat{f} = f \in L^2((0,T) \times \Omega; \R^d)$, ${\hvGn \in H^1(0,T;L^2(\Omega; H^1_\#(Y;\R^d)}$ with $\hvGn\txy = \vGn(t,x,\psin\txy)$, $\hpb = \pb \in L^2(0,T;H^1(\Omega))$, $\hpbO \in L^2((0,T) \times \Omega; H^1_\#(Y))$ with $\hpbO= \pbO(t,x,\psin\txy) + (\psin\txy-y) \cdot \nabla \pb$, $\hvn^\init \in L^2(\Omega;H^1(Y)$ with $\hvn^\init\xy= \vnin(x,\psin(0,x,y))$ that
\begin{subequations}\label{eq:Two-scale-conv:data:Trafo}
	\begin{align}
 \begin{aligned}
     &\widetilde{\hfe} \tsw{2,2} \one{\Yp} f \, , \quad
		\e^{-1} \widetilde{\hvGe}\tsw{2,2} \one{\Yp}\hvGn \, ,
		\quad
	   \partial_x \hvGe \tsw{2,2} \one{\Yp}\partial_y \hvGn \, ,
		\quad
		\e \partial_x \partial_x \vGe \tsw{2,2} \partial_y \partial_y \vGn \, ,
		\\
		&\e^{-1} \partial_t \hvGe \tsw{2,2} \hvGe\, ,
		\quad
	    \partial_x \partial_t\vGe \tsw{2,2} \partial_y \partial_t \vGn \, ,
		\quad
		\e \hvGe \tsw{2,2} \hvGn \, ,
		\\
		&\hpbe \tsw{2,2} \one{\Yp} \hpb, ,
		\quad
        \widetilde{\nabla \hpbe} \tsw{2,2} \one{\Yp} (\nabla_x \hpb+ \nabla_y \hpbO)\, , 
		\\
		&\widetilde{\hwein}(x) \tsw{2,2} \one{\Yp}(y) \hat{w}_0^\init\xy \, ,
		\qquad
		\widetilde{\e \partial_x \hwein}(x) \tsw{2,2} \one{\Yp}(y) \partial_y \hat{w}_0^\init\xy \, . 
 \end{aligned}
	\end{align}
\end{subequations}
As in the untransformed case, we get $\hat{w}_0^\init = \hat{v}_0^\init$.

\section{Existence and a-priori estimates for the microscopic problem}\label{sec:Existence:eps-scaled}
In this section, we show the existence and uniqueness of a solution of \eqref{eq:WeakForm:Eps:Trafo}. Moreover, we derive the uniform a-priori estimates \eqref{eq:AprioirBounds:Stokes} for the solution.

\begin{thm}[Existence, uniqueness and a-priori estimates of the solution of the Stokes equations]\label{thm:Existence:Eps}
	For every $\e >0$, there exists a unique solution $(\hwe, \hqe) \in L^2(0,T;H^1_{\Ge}(\Oe; \R^d)) \times L^2((0,T) \times \Oe)$ with $\partial_t \hwe \in L^2((0,T) \times \Oe; \R^d)$ of \eqref{eq:WeakForm:Eps:Trafo}.
	Moreover, there exists a constant $C >0$ such that for every $\e >0$
	\begin{align}\label{eq:AprioirBounds:Stokes}
		\| \hwe \|_{L^2((0,T) \times \Oe)} + \e \|\nabla \hwe \|_{L^2((0,T) \times \Oe)} + \|\hqe \|_{L^2((0,T) \times \Oe)} \leq C \, .
	\end{align}
\end{thm}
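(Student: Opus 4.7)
The plan is to establish existence, uniqueness and the a-priori bounds in three stages: a Galerkin-type construction for existence, an energy estimate exploiting a hidden symmetry of the transformation coefficients for the velocity bound, and an $\e$-uniform inverse-divergence argument for the pressure bound.

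For existence and uniqueness, I would first reduce \eqref{eq:WeakForm:Eps:Trafo} to a divergence-free problem. Using a right-inverse of the divergence on the fixed perforated domain $\Oe$ together with the data estimate \eqref{eq:BoundsData:vGe:Trafo}, one constructs a lift $\hwe^\ell \in H^1_{\Ge}(\Oe;\R^d)$ with $\div(\hwe^\ell) = -\div(\hvGe)$, so that $\hwe - \hwe^\ell$ belongs to the divergence-free subspace $V_{\mathrm{div}} = \{\varphi \in H^1_{\Ge}(\Oe;\R^d) : \div(\varphi) = 0\}$. I would then approximate $\hwe - \hwe^\ell$ by a Faedo--Galerkin scheme in a countable basis of $V_{\mathrm{div}}$. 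The coefficient system is a linear differential--algebraic system whose mass matrix has entries $\intOe (\PeT \Aem w_m) \cdot w_n \dx$; since $\PeT \Aem = (\PeT \Pe)/\Je$ is pointwise symmetric and, by Assumption \ref{ass:psie}\ref{item:B1}--\ref{item:B2}, uniformly positive definite, this matrix is invertible with regular time-dependence and the Galerkin ODEs admit a unique global solution. Passing to the limit with the bounds derived below produces the velocity $\hwe$; the pressure $\hqe$ is then recovered by a de Rham/Ne\v{c}as-type argument from the fact that the residual functional vanishes on $V_{\mathrm{div}}$. Uniqueness follows from linearity and the energy estimate applied to the difference of two solutions.

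For the velocity bound, I would test \eqref{eq:WeakForm:Eps:Trafo:1} with $\varphi = \hwe$. Pointwise symmetry of $\PeT \Aem$ yields the key identity
\begin{align*}
\intOe \partial_t(\PeT \Aem \hwe) \cdot \hwe \dx = \frac{1}{2}\frac{\dd}{\dt}\intOe \PeT \Aem \hwe \cdot \hwe \dx + \frac{1}{2}\intOe \partial_t(\PeT \Aem) \hwe \cdot \hwe \dx,
\end{align*}
so the leading time-derivative term produces a quantity uniformly equivalent (in $\e$) to $\|\hwe\|_{L^2(\Oe)}^2$. The viscous contribution $\mu \e^2 \intOe \Ae\, 2 \hat{\nabla}^\textrm{s}_\e \hwe : \nabla \hwe \dx$ controls $\e^2 \|\nabla \hwe\|_{L^2(\Oe)}^2$ via the uniform transformed Korn inequality from \cite{JDE24}, and the remaining drift and data terms are absorbed using \eqref{eq:BoundsData:Trafo} and the uniform bounds on $\Pe$, $\Ae$, $\Je$ from Assumption \ref{ass:psie}. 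Gronwall's inequality then gives $\|\hwe\|_{L^\infty(0,T;L^2(\Oe))} + \e \|\nabla \hwe\|_{L^2((0,T) \times \Oe)} \leq C$.

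The main obstacle is the pressure bound. Here I would invoke an $\e$-uniform right-inverse of the divergence operator on $\Oe$ of the type built in \cite{ACM+92, Zhi94} and adapted to perforated/normal-stress settings in \cite{FMW17, JDE24}: for every $\phi \in L^2(\Oe)$ there exists $\varphi \in H^1_{\Ge}(\Oe;\R^d)$ with $\div(\varphi) = \phi$ and $\|\varphi\|_{L^2(\Oe)} + \e \|\nabla \varphi\|_{L^2(\Oe)} \leq C \|\phi\|_{L^2(\Oe)}$, $C$ independent of $\e$. Testing \eqref{eq:WeakForm:Eps:Trafo:1} with this $\varphi$ recovers $\intOe \hqe \, \phi \dx$ on the left while every other term is dominated by the velocity estimate, the data bounds, and the uniform bounds on $\psie$, so duality in $\phi$ yields $\|\hqe\|_{L^2((0,T) \times \Oe)} \leq C$. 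The delicate point is precisely this step: both the construction of the Tartar--Zhikov-type restriction operator with Darcy scaling on the perforated domain, and the careful treatment of the time-derivative term $\partial_t(\PeT \Aem \hwe)$, which in contrast to the fixed-domain case does not reduce to $\partial_t \hwe$ and must be handled through the symmetry identity above, have to be carried out with the correct $\e$-scaling for the bounds to survive the homogenisation limit.
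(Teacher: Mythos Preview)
Your strategy and the paper's share the same key ingredients --- the $\e$-scaled Poincar\'e inequality, the Korn-type inequality for the transformed symmetric gradient from \cite{JDE24}, the symmetry of $\PeT\Aem = \Jem\PeT\Pe$, and the $\e$-uniform right-inverse of the divergence --- but the paper does not build a Galerkin scheme by hand. Instead it casts \eqref{eq:WeakForm:Eps:Trafo} as an abstract differential--algebraic operator equation and invokes a ready-made existence result (Proposition~\ref{prop:Generic:Existence}, taken from \cite{Zim21}), then checks that all the structural constants (ellipticity of $\mathcal{M}_\e$, G{\aa}rding constants of $\mathcal{A}_\e^{(1)}$, inf--sup constant of $\mathcal{B}_\e$, embedding constant $\CVHe$) are $\e$-uniform. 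This is more than a cosmetic difference: the abstract result delivers $\hwe \in C([0,T];V_\e)\cap H^1(0,T;H_\e)$ directly, whereas your energy argument does not.

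Concretely, testing \eqref{eq:WeakForm:Eps:Trafo:1} with $\varphi=\hwe$ and applying Gronwall yields only $\hwe\in L^\infty(0,T;L^2(\Oe))$ and $\e\nabla\hwe\in L^2((0,T)\times\Oe)$. It does \emph{not} give $\partial_t\hwe\in L^2((0,T)\times\Oe)$, which is part of the existence statement and, more importantly, is needed in your own pressure step: when you test with the divergence lift $\varphi$ of an arbitrary $\phi\in L^2(\Oe)$, the term $\intOe\partial_t(\PeT\Aem\hwe)\cdot\varphi\dx$ cannot be controlled by the symmetry identity (that identity only helps when $\varphi=\hwe$), nor by the bounds you have derived. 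To close your argument you would additionally have to test the Galerkin system with $\partial_t\hwe$, use the compatibility $\hwein\in H^1_{\Ge}(\Oe;\R^d)$ with $\div(\hwein)=-\div(\hvGe(0))$, and derive a uniform bound on $\|\partial_t\hwe\|_{L^2((0,T)\times\Oe)}$; only then does the pressure duality argument go through. The paper sidesteps this by letting the abstract DAE theory absorb both the time-regularity and the pressure recovery in one stroke.
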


In order to derive the existence and uniqueness of a solution of \eqref{eq:WeakForm:Eps:Trafo}, we use a generic existence result for differential--algebraic operator equations from \cite{Zim21}, which is given in Proposition \ref{prop:Generic:Existence}. For this, we use the following notation.
Following \cite{HP57}, we say for Banach spaces $X,Y$ an operator-valued function $\mathrm{A} : [0,T] \to \Lin(X,Y)$ is strongly measurable if $t \mapsto \mathcal{A}(t) x$ is Bochner-measurable for every $x \in X$\,.
Let $\mathrm{A} : [0,T] \to \Lin(X,Y)$ be strongly measurable, assume that $t \mapsto \mathrm{A}(t)x$ has a $k$-th generalised derivative $\tfrac{\dd}{\dt}(\mathcal{A}(\cdot) x) $ for some $k\in \N$ and every $x \in X$. Then, the $k$-th derivative $\mathcal{A}^{(k)} \colon [0,T] \to \Lin(X,Y)$ of $\mathcal{A}$ is $\mathcal{A}^{(k)}(t)x \coloneqq \tfrac{\dd}{\dt}(\mathcal{A}(\cdot) x)$\,.
Following the notation of \cite{BvN10}, for Banach spaces $X$, $Y$ with $X$ separable, we say $\mathcal{A} [0,T] \mapsto \Lin(X,Y)$ belongs to $L^p[0,T;\Lin(X,Y)]$ for $p \in[1,\infty]$ if $\mathcal{A}$ is strongly measurable, $t \mapsto \|\mathcal{A}(t)\|_{\Lin(X,Y)}$ is Lebesgue measurable and
${\|\mathcal{A}\|_{L^p[0,T;\Lin(X,Y)]} \coloneqq \| \|\mathcal{A}(\cdot)\|_{\Lin(X,Y)} \|_{L^p(0,T)} < \infty}$\,.
Furthermore, $\mathcal{A} [0,T] \mapsto \Lin(X,Y)$ belongs to $W^{k,p}[0,T;\Lin(X,Y)]$ for $p \in [1,\infty]$, if $\mathcal{A}^{(k)} \in L^p[0,T;\Lin(X,Y)]$ for every $i \in \{0, \dots, k\}$\,. We write $H^k[0,T;\Lin(X,Y)] \coloneqq W^{k,2}[0,T;\Lin(X,Y)]$\,.

\begin{prop}[Existence result for operator differential--algebraic equations]\label{prop:Generic:Existence}
Let $V$, $H$, $Q$ be separable Hilbert spaces. Assume $V$, $H$, $V^*$ form a Gelfand triple with embedding constant $\CVH$ of $V$ in $H$. Let $T >0$, $\mathcal{M} \in H^1[0,T;\Lin(H,H^*)]$, $\mathcal{A} \in L^\infty[0,T;\Lin(V,V^*)]$, $B\in \Lin(V,Q^*)$ and $F = F^{(1)} + F^{(2)}$ for $F^{(1)}\in L^2(0,T;H^*)$, $F^{(2)} \in W^{1,1}(0,T;V^*)$, $G \in H^1(0,T;Q^*)$ and $v^\init \in V$ with $\mathcal{B}v = G(0)$. Assume that $\mathcal{M}$ is self-adjoint and uniformly elliptic with constant $\mu_{\mathcal{M}}>0$, i.e.~for every $t \in [0,T]$ and every $v \in H$ 
\begin{align*}
\mathcal{M}(t) (v,v) \geq \mu_{\mathcal{M}} \|v\|_H \, ,
\end{align*}
assume $\mathcal{A}$ can be decomposed in $\mathcal{A} = \mathcal{A}^{(1)} + \mathcal{A}^{(2)}$ with $\mathcal{A}^{(1)} \in L^\infty[0,T;\Lin(V;V^*)]$ and $\mathcal{A}^{(2)} \in L^\infty[0,T;\Lin(V;H^*)]$ such that $\mathcal{A}^{(1)}$ is self-adjoint and there exists constants $\mu_{\mathcal{A}^{(1)}}, \kappa_{\mathcal{A}^{(1)}}$ such that for a.e.~$t\in(0,T)$ and every $v \in \operatorname{ker}(\mathcal{B})$
\begin{align}\label{eq:Garding:A}
	\mathcal{A}^{(1)}(t)(v,v) \geq \mu_{\mathcal{A}^{(1)}}\|v\|_V^2 - \kappa_{\mathcal{A}^{(1)}} \|v\|_H^2\, .
\end{align}
Moreover, we assume that $\mathcal{B}$ is inf--sup stable with constant $\mu_\mathcal{B}$, i.e.~
\begin{align*}
\adjustlimits\inf_{q \in M\setminus\{0\}} \sup_{v \in V \setminus \{0\}} \frac{\mathcal{B}(v,q)}{\|q\|_Q \|v\|_V} \geq \mu_{\mathcal{B}} \,.
\end{align*}
Then, there exists a unique $(v,p) \in C([0,T];V) \cap H^1(0,T;H) \times L^2(0,T;Q)$ such that for a.e.~$t\in (0,T)$
\begin{subequations}
	\begin{align}\label{eq:Generic:Operator-Problem}
		\frac{\dd}{\dt} (\mathcal{M}(t) v(t)) ) + (\mathcal{A}(t) - \frac{1}{2} \overset{.}{\mathcal{M}}(t)) v(t)) - \mathcal{B}^*p(t) &= F(t) &&\textrm{ in } V^* \, ,
		\\
		\mathcal{B} v(t)&= G(t) && \textrm{ in } Q^* \, ,
		\\
		v(0) &= v^\init && \textrm{ in } V \, .
	\end{align}
\end{subequations}
Moreover, there exists a constant $C$, which depends only on $T$, $\|\mathrm{M}\|_{H^1[0,T;\Lin(H,H^*)]}$, $\|\mathrm{A}^{(1)}\|_{L^\infty[0,T;\Lin(V,V^*)]}$, $\|\mathrm{A}^{(2)}\|_{L^\infty[0,T;\Lin(V,H^*)]}$, $\mu_{\mathcal{M}}$, $\mu_{\mathcal{A}^{(1)}}$, $\kappa_{\mathcal{A}^{(1)}}$, $\mu_{\mathcal{B}}$, $\CVH$ such that
\begin{align*}
\|v\|_{C([0,T];V)} + &\|v\|_{H^1((0,T);H)}+ \|q\|_{L^2(0,T;Q)} 
\\
&\leq C (\|F^{(1)}\|_{L^2(0,T;H^*)} + \|F^{(2)}\|_{W^{1,1}(0,T;V^*)} + \|G\|_{H^1(0,T;Q^*)} + \|u_0\|_V ) \, .
\end{align*}
\end{prop}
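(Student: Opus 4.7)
The plan is to eliminate the constraint $\mathcal{B}v = G$ by a splitting $v = v_G + w$ with $w(t) \in V_0 := \operatorname{ker}(\mathcal{B})$, solve the resulting unconstrained evolution equation on $V_0$ by Galerkin approximation, and recover the pressure $p$ from the inf--sup condition at the end. By closed range and inf--sup stability of $\mathcal{B}$, there is a bounded right-inverse $\mathcal{R} \in \Lin(Q^*, V)$ with $\mathcal{B}\mathcal{R} = \mathrm{id}_{Q^*}$ and $\|\mathcal{R}\|_{\Lin(Q^*,V)} \le 1/\mu_\mathcal{B}$; set $v_G := \mathcal{R}G$, which inherits the $H^1(0,T)$-time-regularity of $G$. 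The compatibility $\mathcal{B}v^\init = G(0)$ gives $w(0) := v^\init - v_G(0) \in V_0$, and testing the equation against $\varphi \in V_0$ annihilates $\mathcal{B}^*p$, so the reduced problem reads: find $w(t) \in V_0$ with
\begin{equation*}
\left\langle \tfrac{\mathrm{d}}{\mathrm{d}t}(\mathcal{M} w), \varphi \right\rangle + \left\langle \bigl(\mathcal{A} - \tfrac{1}{2}\dot{\mathcal{M}}\bigr) w, \varphi \right\rangle = \langle \tilde F, \varphi \rangle \quad \text{for all } \varphi \in V_0,
\end{equation*}
where $\tilde F$ absorbs $F$ and the contributions of $v_G$.

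For this reduced equation I would set up Galerkin approximations $w_n(t) = \sum_{j=1}^n c_j^n(t)\varphi_j$ in a countable basis $(\varphi_j)$ of the separable Hilbert space $V_0$, with $w_n(0) \to w(0)$ in $V$. The mass matrix $\bigl(\langle \mathcal{M}(t)\varphi_j,\varphi_i\rangle\bigr)_{ij}$ is invertible by the uniform coercivity $\mu_\mathcal{M}$, so Carathéodory theory gives global existence of $w_n$. The first a-priori estimate comes from testing with $w_n$: self-adjointness of $\mathcal{M}$ yields the identity
\begin{equation*}
\left\langle \tfrac{\mathrm{d}}{\mathrm{d}t}(\mathcal{M}w_n),w_n\right\rangle - \tfrac{1}{2}\langle\dot{\mathcal{M}}w_n,w_n\rangle = \tfrac{1}{2}\tfrac{\mathrm{d}}{\mathrm{d}t}\langle\mathcal{M}w_n,w_n\rangle,
\end{equation*}
which is precisely why the $-\tfrac{1}{2}\dot{\mathcal{M}}$ correction was built into the equation. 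Combining this with coercivity of $\mathcal{M}$, the Gårding inequality \eqref{eq:Garding:A} on $V_0$, a Young-inequality split of the $\mathcal{A}^{(2)}$-contribution using the embedding constant $\CVH$, and Grönwall's lemma produces uniform $L^\infty(0,T;H) \cap L^2(0,T;V)$ bounds on $w_n$.

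To upgrade these to $w \in C([0,T];V) \cap H^1(0,T;H)$, I would derive a second energy estimate by testing with $\tfrac{\mathrm{d}}{\mathrm{d}t}w_n$. Self-adjointness of $\mathcal{A}^{(1)}$ gives, at least formally, $\langle\mathcal{A}^{(1)}w_n,\tfrac{\mathrm{d}}{\mathrm{d}t}w_n\rangle = \tfrac{1}{2}\tfrac{\mathrm{d}}{\mathrm{d}t}\langle\mathcal{A}^{(1)}w_n,w_n\rangle - \tfrac{1}{2}\langle\dot{\mathcal{A}}^{(1)}w_n,w_n\rangle$; the Gårding lower bound then pins down $\|w_n\|_V^2$ pointwise in time while the lower-order remainder is handled by Grönwall. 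Weak compactness and Aubin--Lions pass to the Galerkin limit $n\to\infty$, yielding the desired $w$; uniqueness follows by applying the first energy identity to the difference of two solutions. Finally, by construction the residual $F - \tfrac{\mathrm{d}}{\mathrm{d}t}(\mathcal{M}v) - (\mathcal{A} - \tfrac{1}{2}\dot{\mathcal{M}})v \in V^*$ annihilates $V_0$ for a.e.~$t$ and hence lies in $\operatorname{Im}(\mathcal{B}^*)$; the dual form of inf--sup, $\|\mathcal{B}^*q\|_{V^*} \ge \mu_\mathcal{B}\|q\|_Q$, then determines a unique $p \in L^2(0,T;Q)$ with the asserted quantitative bound.

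I expect the main obstacle to be the second energy estimate: $\mathcal{A}^{(1)}$ is only $L^\infty$ in time, so $\dot{\mathcal{A}}^{(1)}$ has no direct meaning and the formal symmetric identity above must be justified rigorously, e.g.\ by mollifying $\mathcal{A}^{(1)}$ in time and taking care that the bounds depend only on $\|\mathcal{A}^{(1)}\|_{L^\infty[0,T;\Lin(V,V^*)]}$ and not on the mollification parameter. A secondary delicacy is that the Gårding assumption \eqref{eq:Garding:A} holds \emph{only} on $\operatorname{ker}(\mathcal{B})$ and not on all of $V$, which is exactly why the Galerkin subspaces must lie inside $V_0$ --- and this is precisely what the right-inverse splitting was designed to make possible.
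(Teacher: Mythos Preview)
The paper does not give a self-contained proof: it cites \cite[Theorem~7.24 and Remark~7.25]{Zim21} for the case where $\mathcal{A}^{(1)}$ is uniformly coercive and $F^{(2)}=0$, and then remarks that the G\aa rding case reduces to the coercive one by the standard exponential rescaling $v\mapsto e^{-\lambda t}v$ (\cite[Remark~7.1]{Zim21}). Your outline is therefore a genuine from-scratch reconstruction rather than a paraphrase. The constraint-elimination via a bounded right-inverse of $\mathcal{B}$, the Galerkin scheme on $V_0=\ker\mathcal{B}$, the first energy identity exploiting self-adjointness of $\mathcal{M}$, and the recovery of $p$ through the dual inf--sup estimate are all correct and standard; handling the G\aa rding shift directly by Gr\"onwall rather than by rescaling is also fine.

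The obstacle you single out in the second energy estimate is real, and the mollification patch you propose does not close it as written. After smoothing $\mathcal{A}^{(1)}$ in time to $\mathcal{A}^{(1)}_\delta$ and using the symmetric identity, the residual term $\tfrac12\int_0^t\langle\dot{\mathcal{A}}^{(1)}_\delta w_n,w_n\rangle\,\mathrm{d}s$ is controlled only by $\|\dot{\mathcal{A}}^{(1)}_\delta\|_{L^1[0,T;\Lin(V,V^*)]}\sup_s\|w_n(s)\|_V^2$, and $\|\dot{\mathcal{A}}^{(1)}_\delta\|_{L^1}$ is \emph{not} bounded uniformly in $\delta$ when $\mathcal{A}^{(1)}$ is merely $L^\infty$ in time---it scales like $\delta^{-1}$. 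So either the argument in \cite{Zim21} uses a different device for the $C([0,T];V)\cap H^1(0,T;H)$ regularity (for instance a perturbation from the time-independent case, or exploiting the Galerkin ODE directly rather than the symmetric integration-by-parts), or some additional time-regularity of $\mathcal{A}^{(1)}$ is used there that the present statement has suppressed. The paper offers no guidance on this point; you have correctly located where the substance lies, but the fix needs a different idea.
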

\begin{proof}
For the case that $\mathcal{A}^{(1)}$ is uniformly elliptic and $F_2 =0$ the result is shown in \cite[Theorem 7.24]{Zim21}. Remark \cite[Theorem 7.25]{Zim21} extends it to the case $F_2 \neq 0$. The case that $\mathcal{A}^{(1)}$ satisfies only the weaker Garding inequality \eqref{eq:Garding:A} can be reduced to this case by reformulating and rescaling \eqref{eq:Generic:Operator-Problem} (see \cite[Remark~7.1]{Zim21})
\end{proof}

We reformulate the weak form of the Stokes equations \eqref{eq:WeakForm:Eps:Trafo} in the generic setting of Proposition \ref{prop:Generic:Existence}. We account for the $\e$-parameter by means of the subscript $\e$.

Let $V_\e \coloneqq H^1_\Ge(\Oe; \R^d)$, $H_\e\coloneqq L^2(\Oe; \R^d)$, $Q_\e \coloneqq L^2(\Oe)$ with the norms
\begin{align*}
\|v \|_{V_\e} &\coloneqq \e \|\nabla v\|_{L^2(\Oe)} && \textrm{for } v\in V_\e \, ,
\\
\|v \|_{H_\e} &\coloneqq \|v\|_{L^2(\Oe)} && \textrm{for } v\in H_\e \, ,
\\
\|q \|_{Q_\e} &\coloneqq \|q\|_{L^2(\Oe)} && \textrm{for } q\in Q_\e \, .
\end{align*}
We define the operators $\mathcal{M}_\e \in H^1[0,T; \Lin(H_\e, H_\e^*)]$, $\mathcal{A}_\e \in L^\infty[0,T; \Lin(V_\e, V_\e^*)]$ with 
$\mathcal{A}_\e = \mathcal{A}_\e^{(1)} + \mathcal{A}_\e^{(2)}$ for
$\mathcal{A}_\e^{(1)} \in H^1[0,T; \Lin({V}_\e, {V}_\e^*)]$ and $\mathcal{A}_\e^{(2)} \in H^1[0,T; \Lin({V}_\e, {H}_\e^*)]$, $\mathcal{B}_\e \in \Lin(V_\e, Q_\e^*)$ as well as the right-hand sides $F_\e^{(1)} \in L^2(0,T;H^*_\e)$, $F_\e^{(2)} \in W^{1,1}(0,T;V^*_\e)$ and $G_\e \in H^1(0,T; Q_\e^*)$ by
\begin{align*}
	\mathcal{M}_\e(t) (u,v) &\coloneqq \int\limits_{\Oe} \PemT(t) \Aem(t) u \cdot v \dx && \textrm{for } u,v \in H_\e \, ,
	\\
	\mathcal{A}_\e(t) (u,v) &\coloneqq \mathcal{A}_\e^{(1)}(t) (u,v) + \mathcal{A}_\e^{(2)}(t) (u,v) && \textrm{for } u,v \in V_\e \, ,
	\\
	\mathcal{A}_\e^{(1)}(t) (u,v) &\coloneqq \int\limits_{\Oe} \mu \e^2 \Ae(t) 2\hat{\nabla}^\textrm{s}_\e(t) v: \nabla (\Aem(t) v) \dx && \textrm{for } u,v \in V_\e \, ,
	\\
	\hat{\nabla}^\textrm{s}_\e(t) v &\coloneqq (\PemT(t) \nabla (\Aem(t) u) + (\PemT(t) \nabla(\Aem(t)u) )^\top) /2  &&\textrm{for } u,v \in V_\e \, ,
	\\
	\mathcal{A}_\e^{(2)}(t) (u,v) &\coloneqq - \intOe \big( \partial_t \PeT(t) \Aem(t) u -\PeT(t) \nabla^\top (\Aem(t) u)\Pem(t) \partial_t \psie(t) \big) \cdot v \dxt \hspace{-3.5cm} &&
	\\
	&\qquad	+ \tfrac{1}{2} \overset{.}{\mathcal{M}}_\e(t)(u,v) && \textrm{for } u \in V_\e, \, v \in H_\e \, ,
	\\
	&
	\\
	\mathcal{B}_\e(v,q) &\coloneqq \intTOe q \div(v) \dxt && \textrm{for } v \in V_\e, \, q \in Q_\e \, ,
	\\
	F_\e^{(1)}(t)(\varphi) &\coloneqq \intTOe (\PeT(t) \hfe(t) - \nabla \hpbe(t)) \cdot v \dx
 \\
 &\qquad -	\mathcal{M}_\e(t) (\hvGe (t), v)  && \textrm{for } v \in H_\e \,,
	\\
	F_\e^{(2)}(t)(\varphi) &\coloneqq   - \mathcal{A}_\e(t) (\hvGe (t), v) && \textrm{for } v \in V_\e \,,
	\\
	G_\e(t)(q) &\coloneqq -	\mathcal{B}_\e(\hvGe (t), q ) && \textrm{for } q \in Q_\e \,.
\end{align*}
Thus, we have rewritten the weak form \eqref{eq:WeakForm:Eps:Trafo} in the generic setting of Proposition~\ref{prop:Generic:Existence}:
\begin{subequations}
\begin{align}\label{eq:WeakFormEpsOperator}
	\frac{\dd}{\dt} (\mathcal{M}_\e(t) \hve(t) ) + (\mathcal{A}_\e(t) - \frac{1}{2} \overset{.}{\mathcal{M}}_\e(t)) \hve(t)) - \mathcal{B}_\e^* \hqe(t) &= F_\e(t) &&\textrm{ in } V^*_\e \, 
	\\
	\mathcal{B}_\e \hqe(t) &= G_\e(t) && \textrm{ in } Q_\e^* \,,
	\\
	\ve(0) & = \vein && \textrm{ in } V_\e^* \, .
\end{align}    
\end{subequations}
In order to deduce the uniform bounds \eqref{eq:AprioirBounds:Stokes} from Proposition \ref{prop:Generic:Existence}, it is essential that we estimate the embedding constant $\CVH$, the Garding inequality constants $\mu_{\mathcal{A}_\e^{(1)}}$ and $\kappa_{\mathcal{A}_\e^{(1)}}$ as well as the inf--sup constant $\mu_{\mathcal{B}_\e}$ uniformly.

We obtain a uniform estimate for the embedding constant $\CVH$ from the following $\e$-scaled Poincar\'e inequality.
\begin{lemma}[$\e$-scaled Poincar\'e inequality]\label{lem:EpsPoincare}
There exists a constant $c_\mathrm{P}$ such that for every $v \in H^1_{\Ge}(\Oe;\R^d)$
\begin{align*}
\|v\|_{L^2(\Oe)} \leq \e c_\mathrm{P} \|\nabla v\|_{L^2(\Oe)} \, .
\end{align*}
\begin{proof}
Lemma \ref{lem:EpsPoincare} is a standard result and can be shown by covering $\Oe$ with
$\e$-scaled copies of $\Yp$. Scaling them on $\Yp$ and applying the Poincar\'e
inequality for piecewise zero boundary values there and scaling back yields the estimate.
\end{proof}
\end{lemma}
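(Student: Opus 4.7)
The plan is the usual periodic-scaling argument. First I would establish a reference Poincar\'e inequality on the pore part of the unit cell: there exists $c_{\mathrm{P}} > 0$ such that $\|w\|_{L^2(\Yp)} \leq c_{\mathrm{P}} \|\nabla w\|_{L^2(\Yp)}$ for every $w \in H^1_{\Gamma}(\Yp;\R^d)$. This is a standard consequence of the $C^1$-regularity of $\Yp$ and the fact that $\Gamma$ is a $C^1$-hypersurface of positive $(d-1)$-dimensional Hausdorff measure; a compactness--contradiction argument reduces the claim to showing $\ker(\nabla) \cap H^1_{\Gamma}(\Yp;\R^d) = \{0\}$, and the connectedness of the periodic extension $\Ypp$ forces every connected component of $\Yp$ to share positive surface measure with $\Gamma$, ruling out nonzero constants.

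Next I would exploit the cell decomposition. By the geometric set-up in Section~\ref{sec:Eps-scaled-Problem}, $\Oe = \Omega \cap \e\Ypp$ coincides up to a Lebesgue null set (the cell interfaces) with the disjoint union $\bigsqcup_{k \in \Ke} \e(k + \Yp)$, and each $\e(k+\Gamma)$ is contained in $\Ge$. Given $v \in H^1_{\Ge}(\Oe;\R^d)$, I would set $w_k(y) \coloneqq v(\e(k+y))$ for $y \in \Yp$; since $v$ vanishes on $\Ge$, the trace of $w_k$ on $\Gamma$ vanishes, so $w_k \in H^1_{\Gamma}(\Yp;\R^d)$.

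Applying the reference inequality to each $w_k$ and changing variables via $x = \e(k+y)$, one obtains the cell-wise bound $\int_{\e(k+\Yp)} |v|^2 \dx \leq c_{\mathrm{P}}^2 \e^2 \int_{\e(k+\Yp)} |\nabla v|^2 \dx$, since $\nabla_y w_k(y) = \e\,(\nabla v)(\e(k+y))$ and the Jacobian of the rescaling is $\e^d$. Summing over $k \in \Ke$ yields the claim with the same constant $c_{\mathrm{P}}$, which depends only on the fixed reference geometry and is therefore independent of $\e$.

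The only delicate point is the reference Poincar\'e inequality when $\Yp$ is not a~priori connected. The resolution rests on the assumption that $\Ypp$ is connected: a connected component of $\Yp$ whose boundary did not meet $\Gamma$ would, under periodic translation, give rise to an isolated connected component of $\Ypp$, contradicting the hypothesis. This is the only nontrivial geometric input; the rescaling itself is entirely routine.
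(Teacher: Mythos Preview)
Your proposal is correct and follows precisely the approach sketched in the paper: decompose $\Oe$ into the $\e$-scaled cells $\e(k+\Yp)$, rescale each piece to the reference pore $\Yp$, apply the Poincar\'e inequality for functions vanishing on $\Gamma$, and sum. Your write-up simply fills in the details the paper omits, and your added remark on why every connected component of $\Yp$ must meet $\Gamma$ (via the connectedness of $\Ypp$) is a legitimate point that the paper's sketch leaves implicit.
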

The uniform inf--sup constant can be deduced from the following $\e$-scaled right-inverse of the divergence operator.
\begin{lemma}[$\e$-scaled right-inverse of the divergence operator]\label{lem:div:eps:inverse}
There exists a family of linear continuous operators $\div_\e^{-1} : L^2(\Oe) \to H^1_\Ge(\Oe;\R^d)$, which are right inverse to the divergence, i.e.~$\div \circ \div_\e^{-1} = \operatorname{id}_{L^2(\Oe)}$,
and there is a constant $C>0$ such that for all $f\in L^2(\Oe)$
\begin{align*}
\|\div^{-1}_\e (f)\|_{L^2(\Oe)} + \e \|\nabla (\div^{-1}_\e (f))\|_{L^2(\Oe)} \leq \|f\|_{L^2(\Oe)} \, .
\end{align*}
\end{lemma}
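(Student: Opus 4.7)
The plan is to build $\div^{-1}_\e$ by combining a cellwise Bogovski\u{\i} operator on the reference pore $\Yp$ with a macroscopic correction that transports the cellwise averages of $f$ across the pore network, using the connectivity of $\Ypp$.

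First, I would invoke the classical Bogovski\u{\i} construction on the reference pore $\Yp$ to obtain a continuous linear operator $B\colon L^2_0(\Yp)\to H^1_0(\Yp;\R^d)$ with $\div(Bg)=g$ and $\|Bg\|_{H^1(\Yp)}\leq C\|g\|_{L^2(\Yp)}$; this relies on the bounded Lipschitz character of $\Yp$ assumed in the paper.

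Next, given $f\in L^2(\Oe)$, I would split it into a cellwise constant part and an oscillating part: with $m_k\coloneqq |\e(k+\Yp)|^{-1}\int_{\e(k+\Yp)}f\dx$ for $k\in\Ke$ and $f_0\coloneqq \sum_k m_k\one{\e(k+\Yp)}$, write $f=(f-f_0)+f_0$. The oscillating part $f-f_0$ has zero mean on each cell, so I can define $u_1$ cellwise by rescaling $B$: on $\e(k+\Yp)$ put $u_1(x)\coloneqq \e (Bg_k)(x/\e-k)$ with $g_k(y)\coloneqq \e (f-f_0)(\e(k+y))$. A standard scaling computation yields $\div u_1=f-f_0$, and the boundedness of $B$ together with $\|f-f_0\|_{L^2(\Oe)}\leq C\|f\|_{L^2(\Oe)}$ gives
\[
\|u_1\|_{L^2(\Oe)}+\e\|\nabla u_1\|_{L^2(\Oe)}\leq C\e\|f\|_{L^2(\Oe)}.
\]
Since $Bg_k$ vanishes on all of $\partial \Yp$, the cellwise pieces match trivially across cell interfaces, and the resulting $u_1$ lies in $H^1_0(\Oe;\R^d)\subset H^1_{\Ge}(\Oe;\R^d)$.

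It remains to find $u_2\in H^1_{\Ge}(\Oe;\R^d)$ with $\div u_2=f_0$. I would extend the cellwise constants $(m_k)_{k\in\Ke}$ to a macroscopic function $\bar f$ on $\Omega$ with $\|\bar f\|_{L^2(\Omega)}\leq C\|f\|_{L^2(\Oe)}$, solve $\div U=\bar f$ on $\Omega$ via the classical Bogovski\u{\i} operator (no compatibility condition is needed, since no Dirichlet part of $\partial\Omega$ is imposed), and then apply an $\e$-scaled restriction-type operator to convert $U$ into $u_2$ supported in $\Oe$, vanishing on $\Ge$, with $\div u_2=f_0$. This restriction is assembled cellwise from carrier vector fields on $\Yp$ that vanish on $\Gamma$, are $Y$-periodic, and carry prescribed fluxes through each face of $\partial Y$; such fields exist precisely because $\Ypp$ is connected. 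The resulting bound is $\|u_2\|_{L^2(\Oe)}+\e\|\nabla u_2\|_{L^2(\Oe)}\leq C\|f\|_{L^2(\Oe)}$, and setting $\div^{-1}_\e(f)\coloneqq u_1+u_2$ yields the desired bounded right inverse.

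The main obstacle is the construction of the $\e$-scaled restriction in the last paragraph: converting a macroscopic $H^1(\Omega;\R^d)$ field into one supported only in the pores, vanishing on $\Ge$, preserving the divergence, and still enjoying the scaling $\|\cdot\|_{L^2(\Oe)}+\e\|\nabla\cdot\|_{L^2(\Oe)}\leq C\|\cdot\|_{H^1(\Omega)}$. The connectedness of $\Ypp$ is essential here, and balancing the fluxes between neighbouring cells requires the carrier fields to be chosen compatibly. This is the reason results of this type are technically delicate and form the core of the constructions of Tartar, Allaire, Mikeli\'c and Zhikov, as well as the ACMP extension/restriction machinery cited in the paper.
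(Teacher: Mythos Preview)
The paper does not actually prove this lemma; it simply cites \cite[Lemma~3.12]{JDE24}, noting that the construction there rests on the extension/restriction operators of Tartar \cite{Tar80} and Allaire \cite{All89}. Your sketch is essentially a reconstruction of that classical argument: a cellwise Bogovski\u{\i} step for the mean-free part, and a macroscopic solve followed by a Tartar--Allaire-type restriction for the cellwise averages. So in spirit you are aligned with what the paper invokes.

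Two minor remarks on the sketch itself. First, your scaling in the definition of $g_k$ carries an extra factor of $\e$: with $u_1(x)=\e(Bg_k)(x/\e-k)$ one has $\div u_1(x)=g_k(x/\e-k)$, so the correct choice is $g_k(y)=(f-f_0)(\e(k+y))$ without the prefactor; the final estimate you state is nevertheless the right one. Second, the phrase ``via the classical Bogovski\u{\i} operator (no compatibility condition is needed)'' is a little loose: the standard Bogovski\u{\i} operator maps $L^2_0(\Omega)\to H^1_0(\Omega;\R^d)$ and does require zero mean. What you really need here is a right inverse into $H^1(\Omega;\R^d)$ (no boundary condition on $\partial\Omega$), which one obtains e.g.\ by extending $\bar f$ by zero to a larger box with total mean zero and applying Bogovski\u{\i} there. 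This is a routine fix, but worth stating explicitly.

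You correctly identify the genuine difficulty as the restriction step, and you are right that this is precisely where the connectedness of $\Ypp$ and the machinery of \cite{Tar80,All89} enter. Since the paper itself defers entirely to \cite{JDE24} at this point, your proposal goes further than the paper does while remaining on the same track.
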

\begin{lemma}
	Lemma \ref{lem:div:eps:inverse} was shown in \cite[Lemma 3.12]{JDE24} employing the extension operators of \cite{Tar80, All89}.
\end{lemma}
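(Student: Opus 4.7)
The statement itself is not a mathematical claim to be proved in the usual sense; it is an attribution asserting that Lemma \ref{lem:div:eps:inverse} is established in \cite[Lemma 3.12]{JDE24} by a construction based on the extension operators of \cite{Tar80, All89}. Accordingly, the only ``proof'' consists of verifying the citation: one checks that the hypotheses and conclusion of Lemma \ref{lem:div:eps:inverse} match those of \cite[Lemma~3.12]{JDE24}. Concretely, I would compare the functional setting (the periodically perforated pore domain $\Oe$, the space $H^1_\Ge(\Oe;\R^d)$ with vanishing trace on the interior pore boundary $\Ge$, and $L^2(\Oe)$ for the data), and confirm that the quantitative bound $\|\div_\e^{-1}(f)\|_{L^2(\Oe)} + \e \|\nabla \div_\e^{-1}(f)\|_{L^2(\Oe)} \leq C\|f\|_{L^2(\Oe)}$ appears with an $\e$-independent constant in the cited lemma. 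The plan is simply to cross-reference the statement; no further argument is needed once this verification is done.

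For completeness, I would also sketch (at the level of a remark rather than a proof) the conceptual ingredients underlying the reference, which the statement flags explicitly: the construction in \cite{JDE24} builds a right-inverse of the divergence on a single reference cell $\Yp$ via a Bogovskii-type operator, and then globalises it to $\Oe$ in an $\e$-scaled manner. The globalisation requires extending perforated Sobolev functions across the pore inclusions with controlled norms, which is precisely what the extension operators of \cite{Tar80, All89} provide. Rescaling the cell-wise operator by the periodicity $\e$ yields the factor $\e$ in front of the gradient estimate. The only potential obstacle in reading off the result from the reference is a notational mismatch (e.g. different conventions for the boundary of the pore space or for the direction of the Piola-type extension), but this is a bookkeeping matter and does not involve any genuine analytic step on our part here.
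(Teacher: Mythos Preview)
Your reading is correct: this ``lemma'' in the paper is not a mathematical statement at all but a misplaced proof environment (almost certainly a typo for \texttt{\textbackslash begin\{proof\}}) whose entire content is the citation to \cite[Lemma~3.12]{JDE24} and the remark about the extension operators of \cite{Tar80, All89}. The paper offers no argument beyond that sentence, so your proposal---verify the citation and optionally sketch the Bogovski\u{\i}-on-a-cell plus rescaling construction---already exceeds what the paper itself provides.
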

For the Garding inequality \eqref{eq:Garding:A}, it becomes crucial to estimate the symmetrised gradient $\hat{\nabla}_\e^\mathrm{s}$. Here we use the following Korn inequality
\begin{lemma}[Korn-type inequality for two-scale transformations]\label{lem:Korn}
There exists a constant $\alpha$ such that for every $\e>0$, a.e.~$t\in (0,T)$ and every $v \in H^1{\Ge}(\Oe;\R^d)$
\begin{align*}
\alpha \|v\|_{L^2(\Oe)}^2 \leq \| \PemT(t) \nabla v + \PemT(t) \nabla v \|^2 \, .
\end{align*}
\end{lemma}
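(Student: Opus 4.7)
The plan is to reduce this weighted Korn--Poincar\'e inequality on the reference domain $\Oe$ to a classical Korn--Poincar\'e inequality on the physical perforated domain $\Oe(t) = \psie(t,\Oe)$ by undoing the transformation $\psie$. This is essentially the route carried out in \cite{JDE24}, which is precisely the reference invoked for this lemma.

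Given $v \in H^1_{\Ge}(\Oe;\R^d)$, I first introduce the pullback $u(t,y) \coloneqq v(\psiem(t,y))$, which lies in $H^1_{\Ge(t)}(\Oe(t);\R^d)$ because $\psie(t,\cdot)$ is a $C^2$-diffeomorphism mapping $\Ge$ onto $\Ge(t)$. Differentiating the identity $v(x) = u(t,\psie(t,x))$ in the paper's gradient convention yields $\nabla v(x) = \PeT(t,x)\,(\nabla_y u)(t,\psie(t,x))$, hence
\begin{equation*}
\PemT(t,x)\,\nabla v(x) = (\nabla_y u)(t,\psie(t,x)),
\end{equation*}
so that $\PemT \nabla v + (\PemT \nabla v)^\top$ evaluated at $x$ is $2\,(\nabla_y^\mathrm{s} u)(t,\psie(t,x))$. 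Performing the change of variables $y = \psie(t,x)$ in the two $L^2(\Oe)$ integrals and using the two-sided uniform bounds $c_J \le \Je \le C$ from Assumption~\ref{ass:psie} \ref{item:B1},\ref{item:B2}, one obtains
\begin{equation*}
\|v\|_{L^2(\Oe)}^2 \simeq \|u\|_{L^2(\Oe(t))}^2, \qquad \|\PemT \nabla v + (\PemT \nabla v)^\top\|_{L^2(\Oe)}^2 \simeq \|\nabla_y u + (\nabla_y u)^\top\|_{L^2(\Oe(t))}^2,
\end{equation*}
with constants independent of $\e$ and $t$. Therefore the claim reduces to the classical Korn--Poincar\'e inequality on $\Oe(t)$ for functions vanishing on $\Ge(t)$, with a constant uniform in $\e$ and $t$.

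I would establish this latter inequality by covering $\Oe(t)$ with the images $\psie(t,\e(k+\overline{Y}))$ of the $\e$-scaled reference cells, rescaling each to the unit-size perturbed cell $z \mapsto \e^{-1}(\psie(t,\e(k+z)) - \e k)$, applying Korn--Poincar\'e on the pore of this perturbed unit cell (valid since $u$ vanishes on the internal pore--solid interface), and summing the local estimates. The main obstacle is securing a Korn--Poincar\'e constant on the perturbed unit cells that is uniform in $\e$, $t$ and $k$. Here Assumption~\ref{ass:psie} \ref{item:B1},\ref{item:B2} enters decisively: the rescaled cell maps have $C^1$-norms bounded uniformly in $\e$, $t$, $k$ and Jacobian determinants bounded below by $c_J$, so the perturbed unit pores form a compact family of uniformly Lipschitz domains of comparable volume, and the Korn--Poincar\'e constant on each is controlled purely in terms of these data. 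Tracing back through the change of variables completes the proof.
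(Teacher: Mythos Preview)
The paper's own proof is simply a citation to \cite[Lemma~3.6]{JDE24}, and your sketch correctly reconstructs that argument: pull back to $\Oe(t)$ via $u=v\circ\psiem$ so that the weighted symmetric gradient becomes the ordinary one, then localise to $\e$-cells and rescale.

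One step deserves tightening. Your assertion that the Korn--Poincar\'e constant on the perturbed unit pores is ``controlled purely in terms of'' the Lipschitz character and volume is not literally correct: Korn constants are shape-sensitive and do not transform benignly under mere bi-Lipschitz maps, which is precisely why the weighted inequality is nontrivial in the first place (pulling the perturbed-cell Korn inequality back to $\Yp$ just reproduces the weighted problem). What actually makes the argument go through is the $C^2$-control on $\psie$ from Assumption~\ref{ass:psie}\ref{item:B1} with $l=2$: the rescaled cell maps $z\mapsto \e^{-1}(\psie(t,\e(k+z))-\e k)$ are then uniformly bounded in $C^2$, hence precompact in $C^1$, and a contradiction--compactness argument yields the uniform Korn--Poincar\'e constant over this family. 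Equivalently, one can stay on the fixed pore $\Yp$ and prove the weighted inequality $\|\nabla w\|_{L^2(\Yp)}\le C\|B\nabla w+(B\nabla w)^\top\|_{L^2(\Yp)}$ uniformly over matrix fields $B$ in a $C^1$-precompact set with $\det B$ bounded away from zero. That compactness step is the substantive content of the cited lemma, so it should not be folded into a one-line remark.
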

\begin{proof}
A proof is given in \cite[Lemma 3.6]{JDE24}.
\end{proof}
\begin{proof}[Proof of Theorem \ref{thm:Existence:Eps}]
We show Theorem \ref{thm:Existence:Eps}, by means of Theorem~\ref{thm:Existence:Eps}. In order to derive the estimate \eqref{eq:AprioirBounds:Stokes} we show a uniform estimate for the continuity constant $\CVH$ of the embedding $V_\e \to H_\e$, uniform bounds for the operators $\mathcal{A}_\e$, $\mathcal{A}_\e^{(1)}$, $\mathcal{A}_\e^{(1)}$, $\mathcal{A}_\e^{(2)}$, the right-hand sides $F_\e^{(1)}$, $F_\e^{(2)}$ and $G_\e$, the initial value $\ve$ as well as the ellipticity constant $\mu_{\mathcal{M}_\e}$ of $\mathcal{M}_\e$, the Garding inequality constants $\mu_{\mathcal{A}_\e^{(1)}}$ and $\kappa_{\mathcal{A}_\e^{(1)}}$ of $\mathcal{A}_\e^{(1)}$ and the inf--sup constant of $\mu_{\mathcal{B}_\e}$ of $\mathcal{B}_\e$.

For the following estimates on the operators, the uniform estimate for the transformation coefficients becomes crucial, which is given in Lemma \ref{lem:Est:Psie}.
\begin{itemize}
\item Embedding constant $\CVH$: 
The Poincar\'e estimate from Lemma \ref{lem:EpsPoincare} provides a uniform embedding constant $\CVH$, i.e.~for every $v \in V_\e$, it holds
\begin{align}\label{eq:PoincareHeVe}
	\| v\|_{H_\e} = \|v\|_{L^2(\Oe)} \leq \CVH \e\|\nabla v\|_{L^2(\Oe)} = \CVH \| v\|_{V_\e} \, .
\end{align}

\item Operator $\mathcal{M}_\e$:
Noting that $\Aem = \Jem \Pe$, we obtain for every $t\in [0,T]$ that $\mathcal{M}_\e(t)$ is self-adjoint from
\begin{align*}
\mathcal{M}_\e(t)(v,w) &= \intOe \PeT(t) \Aem(t) v \cdot w \dx = \intOe v \cdot ( \PeT(t)\Jem \Pe(t))^\top w \dt = 
\\
& =
\intOe v \cdot \PeT(t) \Aem(t) w \dt = \mathcal{M}_\e(t)(w,v) \, .
\end{align*}
We note that for a.e.~$t\in [0,T]$ and $u,v \in H_\e$
\begin{align*}
\overset{.}{\mathcal{M}}_\e(t)(u,v) = \intOe \partial_t (\PemT(t) \AemT(t)) u \cdot v \dx \, .
\end{align*}
Since $\PemT$, $\AemT$ and $\partial_t \PemT$, $\partial_t \AemT$ are bounded in $L^\infty((0,T) \times \Oe)^{d\times d}$, we can estimate with the H\"older inequality
\begin{align*}
\|\mathcal{M}_\e \|_{L^2[0,T;\Lin(H_\e, H_\e^*)]} 
&\leq C \|\PemT\|_{L^\infty((0,T) \times \Oe)} \|\AemT\|_{L^\infty((0,T) \times \Oe)} \leq C \, ,
\\
\|\overset{.}{\mathcal{M}_\e} \|_{L^2[0,T;\Lin(H_\e, H_\e^*)]} 
&\leq C \big( \|\PeT\|_{L^\infty((0,T) \times \Oe)} \|\partial_t \AemT\|_{L^\infty((0,T) \times \Oe)} 
\\
&\quad + \|\partial_t \PeT\|_{L^\infty((0,T) \times \Oe)} \|\AemT\|_{L^\infty((0,T) \times \Oe)} \big) \leq C \, .
\end{align*}
In order to show the ellipticity of $\mathcal{M}_\e(t)$, we rewrite 
\begin{align*}
\mathcal{M}_\e(t)(v, v) &= (\PeT(t) \Jem(t) \Pe(t) v \cdot v)_{H_\e} = (\Je^{-1/2}(t) \Pe(t) v \cdot \Je^{-1/2}(t) v)_{H_\e} 
\\
&= \| \Je^{-1/2}(t)\Pe(t) v \|_{H_\e}^2
\end{align*}
for $v \in H_\e$ and use the uniform essential bounds of $\Je$ and $\Pem$ to deduce with the H\"older inequality
\begin{align*}
	\|v \|^2_{H_\e}& \leq \|\Je^{1/2}(t) \Pem(t) \Je^{-1/2}(t)\Pe(t) v\|^2_{L^2(\Oe)}
	\\
	&\leq \|\Je^{1/2}(t) \Pem(t)\|^2_{L^\infty(\Oe)} \| \Je^{-1/2}(t)\Pe(t) v \|_{H_\e}^2 
	\leq C \mathcal{M}_\e(t)(v, v) \, .
\end{align*}
Choosing $\mu_{\mathcal{M}_\e} = C^{-1}$ gives a uniform estimate for the ellipticity constant. 

\item Operators $\mathcal{A}_\e^{(1)}$ and $\mathcal{A}_\e^{(2)}$:
We can estimate with the H\"older inequality, the product rule and the uniform bounds of the coefficients and the Poincar\'e estimate \eqref{eq:PoincareHeVe} 
\begin{align*}
\|&\mathcal{A}_\e^{(1)}(t)(u,v)\|_{L^\infty(0,T)} 
\\
&= \mu 2 \| \Ae \|_{L^\infty((0,T) \times \Oe)} \| \PemT \|_{L^\infty((0,T) \times \Oe)} \e \|\nabla (\Aem u)\|_{L^\infty(0,T;L^2(\Oe))} \| \nabla v\|_{L^2(\Oe)}
	\\
	&\leq
	 C\big( \e \|\nabla \Aem\|_{L^\infty((0,T) \times \Oe)} \|u\|_{L^2(\Oe)}
 +
 \e \|\Aem\|_{L^\infty((0,T) \times \Oe)} \|\nabla u\|_{L^2(\Oe)} \big)\e \| \nabla v\|_{L^2(\Oe)}
 \\
 &\leq
 C \|u\|_{V_\e} \|v\|_{V_\e} \, .
\end{align*}
We note that for a symmetric matrix $A$ and a non symmetric matrix $B$ it holds $A: B = A : (B+B^\top)/2$ and, thus, we can rewrite 
\begin{align*}
	\mathcal{A}_\e^{(1)}(t) (u,v) &= \intOe\mu \e^2 \Je 2\hat{\nabla}^\textrm{s}_\e(t) u: \PemT(t)\nabla(\Aem(t) v) \dx 
	\\
	&=
	\int\limits_{\Oe} \mu \e^2 \Je 2\hat{\nabla}^\textrm{s}_\e(t) u: \hat{\nabla}^\textrm{s}_\e(t) v \dx 
	= 
	\mathcal{A}_\e^{(1)}(t) (v,w) \, ,
\end{align*}
which shows that $\mathcal{A}_\e^{(1)}(t)$ is self-adjoint.
Using this rewriting and the H\"older inequality, we get
\begin{align*}
\| \hat{\nabla}^\textrm{s}_\e(t) v\|_{L^2(\Oe)}^2 &\leq \tfrac{1}{2}\|\Je^{-1/2}(t) \|_{L^\infty(\Oe)}^2 \| \Je^{1/2}(t) \sqrt{2}\hat{\nabla}^\textrm{s}_\e(t) v\|_{L^2(\Oe)}^2
\\
& = \tfrac{1}{2}\|\Je^{-1/2}(t) \|_{L^\infty(\Oe)}^2	\mathcal{A}_\e^{(1)}(t) (v,v) \, .
\end{align*}
We apply the Korn-type inequality of Lemma \ref{lem:Korn} on $(\Aem(t) v)$ in order to estimate the left-hand side from below and get
\begin{align*}
\alpha \e^2 \| \nabla( \Aem(t) v)\|_{L^2(\Oe)}^2 \leq \tfrac{1}{2}\|\Je^{-1/2}(t) \|_{L^\infty(\Oe)}^2	\mathcal{A}_\e^{(1)}(t) (v,v) \, .
\end{align*}
Moreover, with the H\"older inequality, the uniform essential boundedness of $\Ae(t)$ and $\e \partial_x \Ae$ and the Young inequality, we get a constant $\delta >0$ such that
\begin{align*}
\e^2\|\nabla v\|_{L^2(\Oe)}^2
&= \e^2 \|\nabla (\Ae(t) \Aem(t) v)\|_{L^2(\Oe)}^2 
\leq \e^2\|\nabla (\Ae(t) \Aem(t) v)\|_{L^2(\Oe)}^2 
\\
&\leq (\e\|(\Ae(t) \partial_x (\Aem(t) v))\|_{L^2(\Oe)} + \e\| (\Aem(t) v)^\top \partial_x \Ae(t))\|_{L^2(\Oe)} )^2
\\
&\leq (C \e \|\nabla (\Aem(t) v))\|_{L^2(\Oe)} + C \|(\Aem(t) v))\|_{L^2(\Oe)} )^2
\\
&\leq
\e^2\delta\|\nabla (\Aem(t) v))\|_{L^2(\Oe)}^2 + \tfrac{C}{\delta} \|v\|_{L^2(\Oe)}^2\,.
\end{align*}
Choosing $\delta = 2\|\Je^{-1/2}(t) \|_{L^\infty(\Oe)}^{-2}$ and combining the last two equations gives
\begin{align*}
	\alpha \|v\|_{V_\e}^2 - C \|v\|_{H_\e}^2 \leq 
\alpha \|\nabla v\|_{L^2(\Oe)}^2 - \tfrac{C}{\delta} \|v\|_{L^2(\Oe)}^2 \leq	\mathcal{A}_\e^{(1)}(t) (v,v) \, ,
\end{align*}
which shows that $\mathcal{A}_\e^{(1)}(t)$ satisfies a uniform Garding inequality with time- and $\e$-independent constants.

The uniform estimate on $\mathcal{A}_\e^{(2)}(t)$ can be shown by similar computations as for the estimate of  and $\mathcal{A}_\e^{(1)}(t)$ above. One only has to be aware of the fact that $\partial_t \psie \leq \e C$ and thus, one can compensate the factor $\e^{-1}$ which arises in the estimates of $\partial_x \Aem$ and $\nabla u$.

\item Operator $\mathcal{B}_\e$:
Lemma \ref{lem:div:eps:inverse} provides the uniform inf--sup constant $\mu_{\mathcal{B}}$.
Moreover, we get
\begin{align*}
|\mathcal{B}_\e(v,q)| = \intOe q \div(v) \dx \leq C \|q\|_{L^2(\Oe)} \|\nabla v\|_{L^2(\Oe)} = \e^{-1}\|q\|_{H_\e} \|v\|_{V_\e} \, .
\end{align*}
Indeed this does not provide a uniform on $\ |\mathcal{B}_\e(v,q)\|_{\Lin(V,Q^*)}$; however, the bounds on the solutions are independent on $\|\mathcal{B}_\e(v,q)\|_{\Lin(V,Q^*)}$.

\item Estimates for the right-hand sides $F_\e^{(1)}$, $F_\e^{(2)}$ and $G_\e$:
The estimates for the right-hand side $F_\e^{(1)}$ and  $F_\e^{(2)}$  can be deduced from the uniform bound of $\hfe$, the uniform bounds of $\hvGe$ and its derivatives, which are given in \eqref{eq:BoundsData:Trafo}, together with similar estimates as above for the operators.

The estimate for $G_\e$ can be deduced from the estimate on $|\mathcal{B}_\e(v,q)| $ from above and the uniform bound of $\e \nabla \hvGe$, which compensates the factor $\e^{-1}$, which arises in the estimate of $|\mathcal{B}_\e(v,q)|$.

\item initial values: The estimates and compatibility of the initial values is shown in Lemma \ref{lem:Est:Data:Trafo}.
\end{itemize}
\end{proof}

\section{Homogenisation of the substitute problem}\label{sec:Limit-process}
In this section, we pass to the homogenisation limit for the solution $(\hwe, \hqe)$ of \eqref{eq:WeakForm:Eps:Trafo}.
In order to state the convergence, we extend $\hwe$ and $\hqe$ to $\Omega$. We extend $\hwe$ by $0$, which we denote by $\widetilde{w_\e}$ and $\hqe$ by its cell-wise mean value, which we denote by $\hQe$, i.e.~
\begin{align}\label{eq:Extension:hQe}
\hQe \tx \coloneqq 
\begin{cases}
\hqe\tx &\textrm{for } x\in \Oe \, ,
\\
\frac{1}{|\e\Yp|}\int\limits_{\e(k + \Yp)}\hqe\tx &\textrm{for } x\in \Omega \cap \e(k + \Ys) \, \textrm{for } k \in \Ke \, .
\end{cases}
\end{align}

The physically more relevant quantity is $\hve$ and not $\hwe$. It is given by $\hve = \hwe + \hvGe$ in $\Oe$ and extended by $0$ in the solid domain $\Oes$, which we denote by $\widetilde{\hve}$. This extension of $\hve$ is not $H^1$-regularity preserving, but corresponds to the physical meaningful interpretation that there is no fluid flow in $\Oes$. We also extend $\nabla\hve$ by $0$ to $\Omega$, which we denote by $\widetilde{\nabla \hve}$
Since $\hvGe$ is of order $\e$, it vanishes in the limit $\e \to 0$ and $\hve$ and $\hwe$ have the same two-scale limits. We denote the two-scale limit of $\hve$ and $\hwe$ by $\hvn$ because $\hve$ corresponds with the physical meaningful quantity.

In a first step, we show that $\hve$ and some extension of the pressure $\hpe = \hqe + \hpe$ two-scale converge to solutions of the two-scale limit system, which is given by the following instationary two-pressure Stokes equation:
\begin{subequations}\label{eq:Strong:two-pressure:Trafo}
	\begin{align}\notag
		\partial_t (\Anm \hvn) - \nabla_y^\top (\Anm \hvn) \Pnm \partial_t \psin	-\Jnm \mu \div_y(&\Anm \PnmT\nabla_y(\Anm \hvn)
	\\\label{eq:Strong:two-pressure:Trafo:1}
			+\PnmT\nabla_x \hp + \PnmT\nabla_y \hpO &= \hat{f} && \textrm{in } (0,T) \times \Omega\times \Yp \, ,
		\\
		\label{eq:Strong:two-pressure:Trafo:2}
		\Jnm\div_y (\hvn) &= 0 &&\textrm{in } (0,T) \times \Omega\times \Yp \, ,
		\\\label{eq:Strong:two-pressure:Trafo:3}
\div_x \left( \int_{\Yp} \hvn \dy \right) = - \int_{\Yp} \div_y (&\hvGe) \dy &&\textrm{in } (0,T) \times \Omega \, ,
		\\\label{eq:Strong:two-pressure:Trafo:4}
		\hvn &= 0 &&\textrm{on } (0,T) \times \Omega\times \Gamma \, ,
		\\\label{eq:Strong:two-pressure:Trafo:5}
		\hp &= \hqb&&\textrm{on } (0,T) \times \Omega \, ,
		\\\label{eq:Strong:two-pressure:Trafo:6}
	y &\mapsto \hvn, \, \hpO &&Y\textrm{-periodic} \, ,
		\\\label{eq:Strong:two-pressure:Trafo:7}
		\hvn &= \hvn^\init && \textrm{in } \Omega\times \Yp\, .
	\end{align}
\end{subequations}

In order to formulate the weak form of \eqref{eq:Strong:two-pressure:Trafo}, one can proceed as in the $\e$-scaled case. One has to multiply \eqref{eq:Strong:two-pressure:Trafo:1} by $\PnT$ and to employ the product rule for the time-derivative term.
The weak form is given by:
 Find $(\hvn, \hq, \hqO) \in L^2((0,T) \times \Omega;H^1_{\Gamma \#}(\Yp;\R^d))) \times L^2(0,T;H^1_0(\Omega)) \times L^2((0,T) \times \Omega; L^2_0(\Yp))$ with $\partial_t \hvn, \partial_t (\PnT \Anm \hvn) \in L^2((0,T) \times \Omega \times \Yp;\R^d)$
such that
\begin{subequations}\label{eq:WeakForm:TwoPressure:Trafo}
	\begin{align}
		\begin{aligned}\label{eq:WeakForm:TwoPressure:Trafo:1}
			&\intTOYp \partial_t (\PnT \Anm \hvn) \cdot \varphi - \left( \partial_t \PnT \Anm \hvn -\PnT \nabla^\top (\Anm\hvn)\Pnm \partial_t \psin \right) \cdot \varphi \dyxt
			\\
			&+
			\intTOYp \mu \An \PnmT \nabla_y (\Anm\hvn) : \nabla_y (\Anm\varphi) + \nabla_x \hq \cdot \varphi -\hqO \div_y(\varphi) \dyxt 
			\\
			&=
			\intTOYp \PnT \hat{f} \cdot \varphi - (\nabla_x \hpb + \nabla_y \hpbO ) \cdot \varphi \dxt \, ,
		\end{aligned}
		\\
		\begin{aligned}\label{eq:WeakForm:TwoPressure:Trafo:2}
			\intTOYp \phi_1 \div_y(\hvn) \dxt &= 0 \, ,
		\end{aligned}
		\\
		\begin{aligned}\label{eq:WeakForm:TwoPressure:Trafo:3}
			\intTO \phi \div_x\left(\intYp\hvn \dy \right) \dxt &= - \intTOYp \phi \div_y(\hvGn) \dxt \, ,
		\end{aligned}
		\\
		\begin{aligned}\label{eq:WeakForm:TwoPressure:Trafo:4}
			\hvn(0) &= \hvn^\init
		\end{aligned}
	\end{align}
\end{subequations}
for all $(\varphi, \phi, \phi_1) \in L^2((0,T) \times \Omega;H^1_\Ge(\Yp;\R^d)) \times L^2((0,T) \times \Omega) \times L^2((0,T) \times \Omega \times \Yp)$.

\begin{thm}[Convergence result for the solutions of the substitute problem]\label{thm:TwoPressureStokes:Trafo}
Let $(\hwe, \hqe)$ be the solution of \eqref{eq:WeakForm:Eps:Trafo} and $\widetilde{\hwe}$ and $\hQe$ their extensions as defined above. Then, 
\begin{align*}
\hwe \tsw{2} \hvn \, , \qquad
\hQe \tsw{2} \hq \, ,
\end{align*}
where $(\hvn, \hq, \hqO) \in L^2((0,T) \times \Omega;H^1_{\Gamma \#}(\Yp;\R^d))) \times L^2(0,T;H^1_0(\Omega)) \times L^2((0,T) \times \Omega; L^2_0(\Yp))$
are the unique solution of the instationary two-pressure Stokes equations \eqref{eq:WeakForm:TwoPressure:Trafo}.
\end{thm}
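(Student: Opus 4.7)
The plan is to follow the standard compactness-and-passage-to-the-limit strategy for Stokes-type homogenisation, adapted to the transformed setting. From the a priori estimates \eqref{eq:AprioirBounds:Stokes} combined with the uniform bounds on the transformation quantities, the zero-extension $\widetilde{\hwe}$ is bounded in $L^2((0,T) \times \Omega;\R^d)$ and $\e\widetilde{\nabla\hwe}$ is bounded in $L^2((0,T) \times \Omega;\R^{d\times d})$. Standard two-scale compactness, combined with the extension operators of \cite{ACM+92} to handle the perforation, then provides (along a subsequence) $\widetilde{\hwe} \tsw{2} \one{\Yp}\hvn$ and $\e\widetilde{\nabla\hwe} \tsw{2} \one{\Yp}\nabla_y \hvn$ for some $\hvn \in L^2((0,T)\times\Omega;H^1_{\Gamma\#}(\Yp;\R^d))$; the boundary condition $\hvn=0$ on $\Gamma$ is inherited from $\hwe=0$ on $\Ge$. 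For the pressure, testing \eqref{eq:WeakForm:Eps:Trafo:1} with the $\e$-scaled right-inverse $\div^{-1}_\e(\hQe)$ from Lemma~\ref{lem:div:eps:inverse} yields the uniform $L^2$-bound on $\hQe$, and the classical pressure decomposition argument (cf.~\cite{All92a, Mik94}) produces a $y$-independent weak limit $\hq\in L^2((0,T)\times\Omega)$ together with an oscillatory correction $\hqO \in L^2((0,T)\times\Omega;L^2_0(\Yp))$ playing the role of Lagrange multiplier for the microscopic divergence constraint.

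Next, I would pass to the two-scale limit in \eqref{eq:WeakForm:Eps:Trafo} using test functions of the form $\varphi_\e\tx = \varphi_0\tx + \e\varphi_1(t,x,x/\e)$ and $\phi_\e\tx = \phi_0\tx + \phi_1(t,x,x/\e)$, with $\varphi_0,\varphi_1$ smooth, $y$-periodic, compactly supported in $\Omega$, and with $\varphi_1$ vanishing on $\Gamma$. Each term in \eqref{eq:WeakForm:Eps:Trafo:1} is identified by combining the strong two-scale convergences of $\Pe$, $\Aem$ and their time derivatives from Assumption~\ref{ass:psie}\ref{item:A2}--\ref{item:A5} with the weak two-scale convergences of $\hwe$ and $\e\nabla\hwe$, plus the data convergences \eqref{eq:Two-scale-conv:data:Trafo}. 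The pressure term $-\intTOe\hqe\div(\varphi_\e)\dxt$ contributes $\nabla_x\hq\cdot\varphi_0 - \hqO\div_y(\varphi_1)$ in the limit, producing the pressure gradients in \eqref{eq:WeakForm:TwoPressure:Trafo:1}. Testing \eqref{eq:WeakForm:Eps:Trafo:2} separately with $\phi_0$ and with an oscillating $\phi_1$ yields \eqref{eq:WeakForm:TwoPressure:Trafo:2}--\eqref{eq:WeakForm:TwoPressure:Trafo:3}, where the right-hand side of the macroscopic divergence comes from the two-scale limit of $\div\hvGe$. The initial condition \eqref{eq:WeakForm:TwoPressure:Trafo:4} follows from the two-scale convergence of $\hwein$, the regularity $\hq \in L^2(0,T;H^1_0(\Omega))$ is obtained by reading $\nabla_x\hq$ off the limit equation as an $L^2$-function, and the boundary condition $\hq = \hqb$ on $\partial\Omega$ is inherited from the normal-stress boundary condition on $\Le$, whose $\e^2$-scaled viscous contribution vanishes in the limit. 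Uniqueness of the two-pressure system, which again fits into the framework of Proposition~\ref{prop:Generic:Existence}, then upgrades the subsequence convergence to the full sequence.

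The principal obstacle is the uniform $L^2$-bound for $\hQe$: the transformed equations carry time- and $\e$-dependent matrix coefficients in the viscous term and a non-homogeneous divergence $\div\hwe = -\div\hvGe$, so the standard Tartar-type test-function argument must be combined carefully with the $\e$-scaled right-inverse of Lemma~\ref{lem:div:eps:inverse} and the uniform bounds on $\hvGe$ from Lemma~\ref{lem:Est:Data:Trafo}. A secondary difficulty is the passage to the limit in the time-derivative term $\partial_t(\PeT\Aem\hwe)\cdot\varphi$: since strong $L^2$-compactness of $\hwe$ is not available, this term has to be integrated by parts in time onto the test function, after which the strong two-scale convergence of $\PeT\Aem$ identifies the limit as $\intTOYp \PnT\Anm\hvn\cdot\partial_t\varphi\,\dyxt$, which is then integrated back by parts to recover $\partial_t(\PnT\Anm\hvn)$ in \eqref{eq:WeakForm:TwoPressure:Trafo:1}.
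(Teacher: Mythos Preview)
Your overall strategy---compactness, limit passage with oscillating test functions, uniqueness---matches the paper, but the test-function ansatz $\varphi_\e\tx = \varphi_0\tx + \e\varphi_1(t,x,x/\e)$ is the diffusion-homogenisation choice and fails in this Stokes scaling: since $\nabla\varphi_\e = \nabla_x\varphi_0 + \nabla_y\varphi_1 + O(\e)$ is merely $O(1)$, the viscous term $\e^2\Ae\, 2\hat{\nabla}^{\mathrm{s}}_\e\hwe:\nabla\varphi_\e = \e\,(\e\nabla\hwe):O(1)$ vanishes in the limit and the momentum equation \eqref{eq:WeakForm:TwoPressure:Trafo:1} cannot be recovered. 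The paper instead tests \eqref{eq:WeakForm:Eps:Trafo:1} with $\varphi(t,x,x/\e)$, oscillating at leading order: first with an extra factor $\e$ to show that the two-scale limit of $\hQe$ is $y$-independent, then restricted to $\div_y\varphi=0$ to obtain the momentum equation without the microscopic pressure. The regularity $\hq\in L^2(0,T;H^1(\Omega))$ is then read off by choosing $\varphi = \varphi_0(t,x)\varphi_i(y)$ with $\intYp\varphi_i\,\dy=e_i$, and $\hqO$ is introduced a posteriori via the closed-range theorem for $\div_y$ on $H^1_{\Gamma\#}(\Yp;\R^d)$, rather than through a direct decomposition of the pressure extension.

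A few smaller points. Your ``principal obstacle''---the uniform bound on $\hQe$---is already contained in \eqref{eq:AprioirBounds:Stokes} together with the cell-wise mean-value extension \eqref{eq:Extension:hQe}, so no further argument is needed at this stage. The time-integration-by-parts workaround is also unnecessary: Proposition~\ref{prop:Generic:Existence} delivers a uniform bound on $\|\hwe\|_{H^1(0,T;H_\e)}$, so $\partial_t\hwe$ two-scale converges weakly and the limit of $\partial_t(\PeT\Aem\hwe)\cdot\varphi$ is identified directly using the strong two-scale convergence of $\PeT\Aem$. Finally, you omit the step of removing the transpose half of the symmetrised gradient in the limit, i.e.\ showing that $\intYp \An(\PnmT\nabla_y(\Anm\hvn))^\top:\nabla_y(\Anm\varphi)\,\dy=0$; the paper defers this to the argument at the end of \cite[Theorem~4.7]{JDE24}.
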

\begin{proof}
Since $\widetilde{\hwe}$ and $\e \widetilde{\nabla \hwe}, \partial_t \hwe$ are bounded, standard two-scale compactness arguments provide a subsequence and $\hvn \in L^2((0,T) \times \Omega;H^1_\#(Y;\R^d))$ with $\partial_t \hvn \in L^2((0,T) \times \Omega \times Y;\R^d)$ such that $\widetilde{\hwe}$, $\e \widetilde{\nabla \hwe}$ and $\partial_t \hwe$ two-scale converge weakly to $\hvn$, $\nabla_y \hvn$ and $\partial_t\hvn$, respectively, where $\hvn$ is zero on $Y \setminus \Yp$ and, thus, can be identified with an element in $L^2((0,T) \times \Omega;H^1_{\Gamma \#}(\Yp;\R^d))$. With the two-scale convergence of $\hwe(0)$ to $\hvn^\init$, we get $\hvn(0) = \hvn^\init$.
Testing the divergence condition \eqref{eq:WeakForm:Eps:Trafo:2} 
with $\phi\left(t,x,\tfrac{x}{\e} \right) $ for  $\phi\in C^\infty([0,T];C^\infty_c(\Omega; C^\infty_\#(Y)))$ yields the microscopic incompressibility condition \eqref{eq:WeakForm:TwoPressure:Trafo:2}.
Testing the divergence condition \eqref{eq:WeakForm:Eps:Trafo:2} 
with $\phi \in C^\infty([0,T];C^\infty_c(\Omega))$ yields the macroscopic divergence condition \eqref{eq:WeakForm:TwoPressure:Trafo:3}.

Using the boundedness of $\hQe$, we can pass to a further subsequence and get $\hQ \in L^2((0,T) \times \Omega \times Y)$ such that $\hQe$ two-scale converges to $\hq$.
In order to show that $\hq$ is constant on $Y$, we test \eqref{eq:WeakForm:Eps:Trafo:1} by $\e \varphi(t,x,\tfrac{x}{\e})$ for $\varphi \in C^\infty((0,T) \times \Omega; C^\infty_\#(Y))$. Due to this $\e$-factor, all the terms converge to $0$ besides the pressure term and, thus, we get
\begin{align*}
0 &= \lim\limits_{\e \to 0} \intTOe \hqe\tx \div_y \left(\varphi \left(t,x,\tfrac{x}{\e}\right) \right) \dxt 
\\
&= \intTOYp \hq \txy \div_y \left(\varphi (t,x,y) \right) \dxt
\end{align*} 
and, consequently, $\nabla_y \hq = 0$ on$ \Yp$. By the construction of the extension $\hQe$ one can deduce further $\nabla_y \hq = 0$ on $Y$ and, thus, $\hq \in L^2((0,T) \times \Omega)$. 

We test \eqref{eq:WeakForm:Eps:Trafo:1} by $\varphi(t,x,\tfrac{x}{\e})$ for $\varphi \in C^\infty((0,T) \times \Omega;H^1_{Y \#}(\Yp;\R^d))$ with $\div_y(\varphi) = 0$. To pass to the limit $\e \to 0$, we employ the two-scale convergences of the unknowns $\hwe$ and $\hqe$ as well as of the coefficient and data given by Lemma \ref{lem:TwoScaleJacobians} and \eqref{eq:Two-scale-conv:data:Trafo}, respectively. Moreover, we note that $\hvGe$, $\partial_t \hvGe$ and $\e \nabla \hvGe$ are of order $\e$ and, thus, the terms with them vanish in the limit of \eqref{eq:WeakForm:Eps:Trafo:1} and we get
\begin{align}
		\begin{aligned}\label{eq:WeakForm:TwoPressure:Trafo:1:Dist:hQ}
	&\intTOYp \partial_t (\PnT \Anm \hvn) \cdot \varphi - \left( \partial_t \PnT \Anm \hvn -\PnT \nabla^\top (\Anm\hvn)\Pnm \partial_t \psin \right) \cdot \varphi 
	\\
	&\qquad \qquad +
 \mu \An (\PnmT \nabla_y (\Anm\hvn) + \PnmT \nabla_y (\Anm\hvn)^\top) : \nabla_y (\Anm\varphi) 
	\\
	& \qquad\qquad -
	 \hq \div_x(\varphi)  \dyxt 
\\
&	=
	\intTOYp \PnT \hat{f} \cdot \varphi - (\nabla_x \hpb + \nabla_y \hpbO) \cdot \varphi \dxt \, .
\end{aligned}
\end{align}
We test \eqref{eq:WeakForm:TwoPressure:Trafo:1:Dist:hQ} with $\varphi_0 \varphi_i$ for $\varphi_0 \in C^\infty((0,T)\times \Omega)$ and $\varphi_i \in H^1_{\Gamma \#}(\Yp;\R^d)$ such that $\intYp \varphi_1 \dy = e_i$ for $i \in \{1, \dots, d\}$. Such functions $\varphi_i$ can be constructed similarly to the proof of \cite[Lemma 2.10]{All92}.
For these test function, we can rewrite the pressure term as $\intTOYp \hq \div_x(\varphi_0 \varphi_1) \dyxt = \intTO \hq \partial_{x_i}\varphi_0 \dyxt$, while we interpret the remaining terms as functional for $\varphi \in L^2((0,T) \times \Omega)$. Consequently, $\hq \in L^2(0,T;H^1(\Omega))$ and we can integrate the macroscopic pressure term in \eqref{eq:WeakForm:TwoPressure:Trafo:1:Dist:hQ} by parts.
By a density argument, the resulting equation holds for all $\varphi \in L^2((0,T) \times \Omega;H^1_{Y \#}(\Yp;\R^d))$ with $\div_y(\varphi) = 0$. 

In order to satisfy the equation for all test functions $\varphi \in L^2((0,T) \times \Omega;H^1_{\Gamma \#}(\Yp;\R^d))$, we add a microscopic pressure $\hqO$. For this, we note that the Bogovskii operator provides the surjectivity of $\div_y$ from $L^2((0,T) \times \Omega;L^2_0(\Yp))$ onto $L^2((0,T) \times \Omega;H^1_{\Gamma \#}\Yp;\R^d))$. Consequently, $\div_y$ has closed range and the closed range theorem provides $\hqO$ such that 
\begin{align}
	\begin{aligned}\label{eq:WeakForm:TwoPressure:Trafo:1:Symmetrised}
		&\intTOYp \partial_t (\PnT \Anm \hvn) \cdot \varphi - \left( \partial_t \PnT \Anm \hvn -\PnT \nabla^\top (\Anm\hvn)\Pnm \partial_t \psin \right) \cdot \varphi 
		\\
		&
		\qquad \qquad +\mu \An (\PnmT \nabla_y (\Anm\hvn) + \PnmT \nabla_y (\Anm\hvn)^\top) : \nabla_y (\Anm\varphi) 
		\\
		&\qquad \qquad +
		 \nabla_x \hq \cdot \varphi - \hqO \div_y(\varphi) \dyxt 
		\\
		&=
		\intTOYp \PnT \hat{f} \cdot \varphi - (\nabla_x \hpb + \nabla_y \hpbO )\cdot \varphi \dxt
	\end{aligned}
\end{align}
holds for all test functions $\varphi \in L^2((0,T) \times \Omega;H^1_{\Gamma \#}(\Yp;\R^d))$.

To deduce \eqref{eq:WeakForm:TwoPressure:Trafo:1}, it remains to show that $$\intYp \mu \An (\PnmT \nabla_y (\Anm\hvn) + \PnmT \nabla_y (\Anm\hvn)^\top) : \nabla_y (\Anm\varphi) \dy = 0$$ for a.e.~$\tx \in \Omega \times \Yp$. This can be done following the argumentation in the end of the proof of \cite[Theorem 4.7]{JDE24}.

Since this argumentation holds also after passing to an arbitrary subsequence before and the fact that solution of \eqref{eq:WeakForm:TwoPressure:Trafo} is unique (see Lemma \ref{lem:Existence:TwoPressureStokes} below), the convergence holds for the whole sequence.
\end{proof}

\begin{lemma}[Existence and uniqueness of the solution of the two-pressure Stokes equations] \label{lem:Existence:TwoPressureStokes}
There exists a unique solution $(\hvn, \hq, \hqO) \in L^2((0,T) \times \Omega;H^1_{\Gamma \#}(\Yp;\R^d))) \times L^2(0,T;H^1_0(\Omega)) \times L^2((0,T) \times \Omega; L^2_0(\Yp))$ with $\partial_t \hvn, \partial_t (\PnT \Anm \hvn) \in L^2((0,T) \times \Omega \times \Yp)$ of \eqref{eq:WeakForm:TwoPressure:Trafo}.
\end{lemma}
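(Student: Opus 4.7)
The plan is to cast \eqref{eq:WeakForm:TwoPressure:Trafo} as an operator differential--algebraic equation and apply Proposition~\ref{prop:Generic:Existence} at the two-scale level, mirroring the proof of Theorem~\ref{thm:Existence:Eps}. After subtracting a Dirichlet lift of the boundary datum $\hqb$ so that the macroscopic pressure $\hq$ vanishes on $\partial \Omega$, I would choose the Hilbert framework
\begin{align*}
V_0 &= L^2(\Omega; H^1_{\Gamma \#}(\Yp;\R^d)), & H_0 &= L^2(\Omega \times \Yp;\R^d), & Q_0 &= H^1_0(\Omega) \times L^2(\Omega; L^2_0(\Yp)),
\end{align*}
and define $\mathcal{M}_0$, $\mathcal{A}_0 = \mathcal{A}_0^{(1)} + \mathcal{A}_0^{(2)}$ and $\mathcal{B}_0$ from the weak form \eqref{eq:WeakForm:TwoPressure:Trafo} by replacing $\Ae, \Pe, \Je$ with $\An, \Pn, \Jn$ and $\div$ with $\div_y$ in the microscopic terms; the macroscopic pressure $\hq$ acts through $\nabla_x \hq \cdot \varphi$ and the microscopic pressure $\hqO$ through $-\hqO \div_y(\varphi)$.

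Most structural hypotheses of Proposition~\ref{prop:Generic:Existence} transfer from the $\e$-scaled case nearly verbatim. Self-adjointness and uniform ellipticity of $\mathcal{M}_0$ use only the pointwise lower bound $\Jn \geq c_J$ and the essential boundedness of $\Pn$, $\Pnm$, which are inherited from Assumption~\ref{ass:psie} by passing to the two-scale limit of the corresponding estimates on $\Pe, \Pem, \Je$. The Garding inequality for $\mathcal{A}_0^{(1)}$ is obtained by repeating the $\e$-scaled computation with the Korn-type inequality \cite[Lemma 3.6]{JDE24} applied fibrewise in $y$ on $\Yp$ to $\Anm \hvn$, combined with the Poincar\'e inequality on $\Yp$ using the zero trace on $\Gamma$. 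Time regularity of $\mathcal{M}_0, \mathcal{A}_0^{(1)}, \mathcal{A}_0^{(2)}$ follows from Assumption~\ref{ass:psie}\ref{item:L1}--\ref{item:A6}, while the bounds on $F_0^{(1)}, F_0^{(2)}, G_0$ and on $\hvn^\init$ in the Bochner classes required by Proposition~\ref{prop:Generic:Existence} follow from the two-scale data estimates collected in \eqref{eq:Two-scale-conv:data:Trafo}.

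The principal obstacle is the inf--sup stability of the composite operator $\mathcal{B}_0$, because the constraint now couples microscopic incompressibility $\div_y v = 0$ with a macroscopic balance $\div_x(\int_\Yp v \dy) = g$. I would establish it by building a right-inverse in two stages. A fibrewise Bogovskii operator $D_y : L^2(\Omega; L^2_0(\Yp)) \to V_0$ with $\div_y D_y q_1 = q_1$ and a linear norm bound is available because $\Yp$ has a Lipschitz periodic extension. For the macroscopic component one combines a standard right-inverse of $\div_x$ on $H^1_0(\Omega;\R^d)$ with a fixed cell profile $\xi_i \in H^1_{\Gamma \#}(\Yp;\R^d)$ satisfying $\div_y \xi_i = 0$ and $\int_\Yp \xi_i \dy = e_i$; such $\xi_i$ exist by the construction used in the proof of Theorem~\ref{thm:TwoPressureStokes:Trafo}. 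A Fortin-type assembly, with the two pieces compensated against each other so as not to violate the microscopic constraint, yields the uniform inf--sup constant $\mu_{\mathcal{B}_0} > 0$ for the duality pairing with $Q_0$.

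Finally, the compatibility $\mathcal{B}_0 \hvn^\init = G_0(0)$ is inherited from the compatibility of $\hwein$ proved in Lemma~\ref{lem:Est:Data:Trafo} by passing to the two-scale limit. With all hypotheses of Proposition~\ref{prop:Generic:Existence} verified, one obtains existence and uniqueness of $(\hvn, \hq, \hqO)$ in the stated regularity class, completing the proof.
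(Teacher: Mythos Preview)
Your proposal is correct and follows the same overall strategy as the paper: reformulate the two-pressure system in the abstract differential--algebraic framework of Proposition~\ref{prop:Generic:Existence} with $V_0 = L^2(\Omega;H^1_{\Gamma\#}(\Yp;\R^d))$, $H_0 = L^2(\Omega\times\Yp;\R^d)$, and $Q_0$ the product of the two pressure spaces, then verify the structural hypotheses. A few points where the paper proceeds more economically are worth noting. First, the paper observes that existence is already secured by the homogenisation process (Theorem~\ref{thm:TwoPressureStokes:Trafo} constructs a solution as the two-scale limit), so Proposition~\ref{prop:Generic:Existence} is invoked only for uniqueness; you reprove existence, which is harmless but redundant. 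Second, the viscous term in \eqref{eq:WeakForm:TwoPressure:Trafo:1} involves the \emph{unsymmetrised} gradient $\An\PnmT\nabla_y(\Anm\hvn):\nabla_y(\Anm\varphi)$, so the Garding inequality for $\mathcal{A}_0^{(1)}$ follows directly from ellipticity of $\Jn\PnmT\Pnm$ without any Korn-type inequality; the paper explicitly remarks that this step ``becomes even simpler'' than in the $\e$-scaled case. Third, for the inf--sup stability of $\mathcal{B}_0$ the paper simply cites \cite[Lemma~4.10]{JDE24}, whose proof is precisely the two-stage Bogovskii/cell-profile construction you outline. Your argument is therefore a faithful expansion of the paper's sketch, with one unnecessary detour through Korn.
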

\begin{proof}
Indeed, the existence of the solution is already secured by the homogenisation process. In order to show the uniqueness, one can reformulate \eqref{eq:WeakForm:TwoPressure:Trafo} in the abstract setting of Proposition \ref{prop:Generic:Existence} similarly as in the $\e$-scaled case. The construction and the estimates of $\mathcal{M}$ and $\mathcal{A}$ corresponds to the $\e$-scaled case and become even simpler since there is no Korn-type inequality required. We define the operator $\mathcal{B} \in \Lin(L^2(\Omega;H^1_{\Gamma \#}(\Yp;\R^d), (H^1(\Omega)\times L^2(\Omega;L^2_0(\Yp)) )^*)$ by 
\begin{align*}
	\mathcal{B}(v, (p,p_1)) \coloneqq \intOYp \nabla p_1 \cdot \varphi - p_1 \div_y(v)) \dyx 
\, .
\end{align*}
Its inf--sup stability can be shown as in Lemma \cite[Lemma 4.10]{JDE24}.

For the compatibility of the initial values $\hvn$, we note that
$\hvn \in L^2((0,T) \times \Omega;H^1_{\Gamma \#}(\Yp;\R^d)))$. Moreover, one can show $\mathcal{B}(\hvn) = \int\limits_Y\div_y(\hvGn) \dy$ arguing as is the derivation of \eqref{eq:WeakForm:TwoPressure:Trafo:2}--\eqref{eq:WeakForm:TwoPressure:Trafo:3} by employing the compatibility of the $\e$-scaled initial values.
\end{proof}

\section{A Darcy law with memory for evolving microstructure}\label{sec:DarcySubst}
In \eqref{eq:WeakForm:TwoPressure:Trafo}, the macroscopic pressure term $\nabla_x \hp$ contributes as source term similarly to $\PnT\hat{f}$. These two terms differ in their microscopic structure, i.e.~$\nabla_x \hp$ is independent of $y$ while $\PnT\hat{f}$ has the $y$-dependence arising from $\PnT$. Thus, one would have to construct two different cell problems in order to account for this coefficient.
The following computation shows that the difference between the source terms $\nabla_x \hat{q}$ and $\PnT \nabla_x \hat{q}$ leads only to an additive difference of the microscopic pressure term.
Let $\varphi \in H^1_{\Gamma \#} (\Yp, \R^d)$, then we get by integrating by parts
\begin{align*}
	\intYp \PnT e_i \cdot \varphi \dy &= \intYp (\nabla_y (\psin-y) + \1) e_i \cdot \varphi \dy =
	\intYp (\nabla_y (\psin-y)_i + \1) e_i \cdot \varphi \dy \\
	& =
	\intYp e_i \cdot \varphi \dy - \intYp (\psin-y)_i \div_y(\varphi) \dy \, ,
\end{align*}
where the boundary term of the integration by parts vanishes since $\varphi$ is zero on $\Gamma$ and $\varphi$ and $\psin\txy -y$ are $Y$-periodic.
This computation allows us to rewrite the macroscopic pressure terms in \eqref{eq:WeakForm:TwoPressure:Trafo:1} leading to
\begin{align}
	\begin{aligned}\label{eq:WeakForm:TwoPressure:Trafo:1:PressureAdjusted}
		&\intTOYp \partial_t (\PnT \Anm \hvn) \cdot \varphi - \left( \partial_t \PnT \Anm \hvn -\PnT \nabla^\top (\Anm\hvn)\Pnm \partial_t \psin \right) \cdot \varphi 
		\\
		&\qquad \qquad +
		 \mu \An \PnmT \nabla_y (\Anm\hvn) : \nabla_y (\Anm\varphi) + \PnT\nabla_x \hq \cdot \varphi -\hqO' \div_y(\varphi) \dyxt 
		\\
		&=
		\intTOYp \PnT \big(\hat{f} - (\nabla_x \hpb + \nabla_y \hpbO' )\big) \cdot \varphi \dxt \, ,
	\end{aligned}
\end{align}
for
\begin{align*}
	\hqO' = \hqO - (\psin-y) \cdot \nabla_x \hq \, \qquad \hpbO' = \hpbO - (\psin-y) \cdot \nabla_x \hq
\end{align*}
instead of \eqref{eq:WeakForm:TwoPressure:Trafo:1}. 
The same substitution but in the strong form \eqref{eq:Strong:two-pressure:Trafo:1}, i.e.~
\begin{align*}
\hpO' = \hpO - (\psin-y) \cdot \nabla_x \hp \, ,
\end{align*}
cancels the coefficient $\PnmT$ in front of the macroscopic pressure $\nabla_x \hp$, i.e.~one can replace \eqref{eq:Strong:two-pressure:Trafo:1} by 
\begin{align}
	\begin{aligned}
		\partial_t (\Anm \hvn) - \nabla_y^\top (\Anm \hvn) \Pnm \partial_t \psin	-\Jnm \mu \div_y(&\Anm \PnmT\nabla_y(\Anm \hvn)
		\\\label{eq:Strong:two-pressure:Trafo:1:PressureAdjusted}
		+\nabla_x \hp + \PnmT\nabla_y \hpO' &= \hat{f} && \hspace{-1cm} \textrm{in } (0,T) \times \Omega\times \Yp \, .
	\end{aligned}
\end{align}

Having this reformulation, it suffices to consider only one type of cell problem for the contribution of the macroscopic bulk term. The cell problem and its solutions $(\hzi, \hat{\pi}_i)$ are parametrised over the initial times $s \in (0, T)$ and the macroscopic position $x \in \Omega$ and the direction $e_i$ for the initial values with $i \in \{1 , \dots, d\}$. It is given by:
For given $(s,x)\in (0,T) \times \Omega$, find $\hzi(s,x,t,y)$ and $\hat{\pi}_i(s,x,t,y)$ such that
\begin{subequations}\label{eq:Strong:two-pressure:Trafo:CellProb:A}
	\begin{align}\notag
		\partial_t (\Anm \hzi) - \nabla_y^\top (\Anm \hzi) \Pnm \partial_t \psin&
		\\	-\Jnm  \div_y(\Anm \PnmT\nabla_y(\Anm \hzi)
		\label{eq:Strong:two-pressure:Trafo:CellProb:A:1}
		+ \nabla_y \hat{\pi}_i &= 0 && \textrm{in } (s,T) \times \Yp \, ,
		\\
		\label{eq:Strong:two-pressure:Trafo:CellProb:A:2}
		\Jnm\div_y (\hzi) &= 0 &&\textrm{in } (s,T) \times \Yp \, ,
		\\
		\hzi &= 0 &&\textrm{on } (s,T) \times \Gamma \, ,
		\\
		y \mapsto \hzi, & \hpO &&Y\textrm{-periodic} \, ,
		\\
		\Anm(s)\hzi(s) &= e_i && \textrm{in } \Yp\, .
	\end{align}
\end{subequations}

The second cell problem and its solution $(\hat{\zeta}^\init, \hat{\pi}^\init)$ accounts for the contribution of the initial value of \eqref{eq:Strong:two-pressure:Trafo}. It is parametrised over the macroscopic position $x \in \Omega$ and is given by 
\begin{subequations}\label{eq:Strong:two-pressure:Trafo:CellProb:B}
	\begin{align}\notag
		\partial_t (\Anm \hat{\zeta}^\init) - \nabla_y^\top (\Anm \hat{\zeta}^\init) \Pnm \partial_t \psin&
		\\	-\Jnm \mu \div_y(\Anm \PnmT\nabla_y(\Anm \hat{\zeta}^\init)
		\label{eq:Strong:two-pressure:Trafo:CellProb:B:1}
		+ \PnmT\nabla_y \hat{\pi}_0 &= 0 && \textrm{in } (0,T) \times \Yp \, ,
		\\
		\label{eq:Strong:two-pressure:Trafo:CellProb:B:2}
		\Jnm\div_y (\hat{\zeta}^\init) &= 0 &&\textrm{in } (0,T) \times \Yp \, ,
		\\
		\hat{\zeta}^\init &= 0 &&\textrm{on } (0,T) \times \Gamma \, ,
		\\
		y \mapsto \hat{\zeta}^\init, & \hpO &&Y\textrm{-periodic} \, ,
		\\
		\Anm(0)\hat{\zeta}^\init(0) &= \hvn^\init && \textrm{in } \Yp\, .
	\end{align}
\end{subequations}

Comparing these two-cell problems with \eqref{eq:Strong:two-pressure:Trafo:1:PressureAdjusted}, \eqref{eq:Strong:two-pressure:Trafo:2}--\eqref{eq:Strong:two-pressure:Trafo:7} leads to
\begin{align}\label{eq:hvn=hzeta}
	\hvn\txy &= \hat{\zeta}^\init\txy + \frac{1}{\mu}\int\limits_0^t \sum\limits_{i=1}^d \hzi(s,x,t,y) \big( \hat{f}_i - \partial_{x_i} \hp \big) \, ,
	\\\label{eq:hpO=hpi}
	\hqO'\txy + \hpbO'\txy &= \hat{\pi}_0\txy +  \frac{1}{\mu}\int\limits_0^t \sum\limits_{i=1}^d \hat{\pi}_i (s,x;t,y) \big(\hat{f}_i - \partial_{x_i} \hp \big) \, .
\end{align}

\begin{rem}
Note that the bulk source terms in \eqref{eq:Strong:two-pressure:Trafo:1:PressureAdjusted} becomes an initial value in the cell problem \eqref{eq:Strong:two-pressure:Trafo:CellProb:A}. Thus, the solution of the cell problem does not have the same physical unit, namely $\hzi$ are accelerations of the fluid and not velocities. This is also addressed in \eqref{eq:hvn=hzeta} by the time integration.
This reformulation leads to incompatible initial data in \eqref{eq:Strong:two-pressure:Trafo:CellProb:A}, i.e.~the initial values do not satisfy the zero boundary values at $\Gamma$. Thus, it does not satisfy the assumptions of Proposition \ref{prop:Generic:Existence}. Therefore, we have to use a weaker solution concept, which provides the time derivative and the pressure only in a distributional sense. For this, one can reformulate \eqref{eq:Strong:two-pressure:Trafo:CellProb:A} in an operator formulation for which \cite[Theorem 7.14]{Zim21} provides the existence and \cite[Theorem 7.19]{Zim21} the uniqueness of a solution. 

For the case of a stationary domain one can integrate the first cell problem over the time and consider there an inhomogeneous right-hand side in the momentum equation and a homogeneous initial value (see \cite{All92a}). Then, one has to integrate over the time derivative of the cell problem in order to identify the two-scale limit of the velocity.
In our case of a moving domain, this integration of the cell problems would lead to additional terms due to the time-dependent coefficients and thus complicate the structure. 
\end{rem}

In order to compute the effective fluid velocity, we define
\begin{align*}
\hv \coloneqq \intYp \hvn \dy \, .
\end{align*}
Separating the macroscopic and microscopic variable in \eqref{eq:hvn=hzeta} and in \eqref{eq:Strong:two-pressure:Trafo:3} gives the following integro-differential equation as homogenised system
\begin{subequations}\label{eq:DarcyMemory:Trafo}
\begin{align}\label{eq:DarcyMemory:Trafo:}
\hv \tx &= \hv^\init + \int\limits_{0}^t \hat{K}(s,t,x) (\hat{f} - \nabla \hp)(s,x) \ds &&\textrm{in } (0,T) \times \Omega \, ,
\\\label{eq:DarcyMemory:Trafo:2}
\div_x ( \hv\tx) &= - \int_{\Yp} \div_y (\hvGn)\txy \dy &&\textrm{in } (0,T) \times \Omega \, ,
\\\label{eq:DarcyMemory:Trafo:3}
\hp\tx &= \hpb\tx &&\textrm{on } (0,T) \times \Omega \, ,
\end{align}
where the permeability-type tensor $\hat{K}(s,t,x)$ is given by
\begin{align*}
\hat{K}(s,t,x)_{ji} \coloneqq \intYp \hzi(s,t,x,y) \cdot e_j\dy 
\end{align*}
\end{subequations}
and the initial value $\hv^\init$ by
\begin{align*}
\hv^\init\tx = \intYp \hat{\zeta}_0\txy \dy \, .
\end{align*}

In order to give $\hv$ a physical interpretation, we consider the back-transformation of the limit equations in the following section.

In the case that we model a no-slip boundary condition at $\vGe$, one has $\vGe\tx = \partial_t \psie(t,\psi_\e^{-1}\tx)$, which gives $\hvGe = \Ae \partial_t \psi$ and allows the following simplification.
\begin{lemma}[Macroscopic divergence condition for a microscopic no-slip boundary condition]\label{lem:NoSlipSimplification}
Assume that $\vGe\tx = \partial_t \psie(t,\psi_\e^{-1}\tx)$ for $x \in \Ge(t)$. 
Then, one can simplify the right-hand side of the macroscopic divergence condition \eqref{eq:WeakForm:TwoPressure:Trafo:3} into
	\begin{align}\label{eq:WeakForm:TwoPressure:Trafo:3:NoSlip}
		\begin{aligned}
			\intTO \phi \div_x\left(\intYp\hvn \dy \right) \dxt &= - \intTOYp \phi \div_y(\hvGn) \dxt \\
			&= - \intTO \phi\frac{\dd}{\dt} \Theta \dxt
		\end{aligned}
\end{align}
for $\Theta \tx = |\Yptx|$\,.
\end{lemma}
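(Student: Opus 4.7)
The first equality is exactly \eqref{eq:WeakForm:TwoPressure:Trafo:3}, so the task is to establish the second, i.e. to show that
\begin{equation*}
\int_{\Yp} \div_y (\hvGn)\txy \dy = \frac{\dd}{\dt}\Theta\tx = \frac{\dd}{\dt}|\Yptx|
\end{equation*}
for a.e.~$\tx \in (0,T)\times \Omega$. I would proceed in three short steps.

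\textbf{Step 1: Identify $\hvGn$ under the no-slip hypothesis.} If $\vGe\tx = \partial_t \psie(t,\psiem\tx)$ for $x\in\Ge(t)$, then evaluating at $\psie\tx$ and using the definition $\hvGe\tx = \Ae\tx \vGe(t,\psie\tx)$ gives $\hvGe = \Ae \,\partial_t \psie$. By Assumption~\ref{ass:psie}\ref{item:A2} and \ref{item:A4}, $\Ae \tss{2,2} \An$ and $\e^{-1}\partial_t\psie \tss{2,2} \partial_t\psin$, so the two-scale limit appearing in \eqref{eq:Two-scale-conv:data:Trafo} is
\begin{equation*}
\hvGn\txy = \An\txy\, \partial_t \psin\txy.
\end{equation*}

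\textbf{Step 2: Evaluate $\div_y(\hvGn)$ via Piola and Jacobi.} Using the product rule $\div(Av) = \div(A)\cdot v + A:\nabla v$ together with the Piola identity $\div_y(\An) = \div_y(\operatorname{Adj}(\Pn)) = 0$, I obtain
\begin{equation*}
\div_y(\hvGn) = \An : \nabla_y \partial_t \psin.
\end{equation*}
Since $\An = \Jn \Pnm$, this equals $\Jn\, \Pnm : (\partial_t \Pn)^\top = \Jn\,\operatorname{tr}(\Pnm \partial_t \Pn)$, and the Jacobi formula for the time derivative of a determinant yields
\begin{equation*}
\div_y(\hvGn) = \Jn \,\operatorname{tr}(\Pnm \partial_t \Pn) = \partial_t \Jn.
\end{equation*}

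\textbf{Step 3: Identify $\Theta$ with the integral of $\Jn$.} Because $\Yptx = \psin\txy(\Yp)$ and $\psin(t,x,\cdot)$ is a $C^2$-diffeomorphism (Assumption~\ref{ass:psie}\ref{item:L2}) with Jacobian $\Jn\geq c_J>0$, the change-of-variables formula gives
\begin{equation*}
\Theta\tx = |\Yptx| = \int_{\Yp} \Jn\txy \dy,
\end{equation*}
and the regularity \ref{item:L1} together with \ref{item:A5}--\ref{item:A6} is more than enough to differentiate under the integral. Thus
\begin{equation*}
\frac{\dd}{\dt}\Theta\tx = \int_{\Yp} \partial_t \Jn\txy \dy = \int_{\Yp} \div_y(\hvGn)\txy \dy,
\end{equation*}
which is the desired identity. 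Substituting into \eqref{eq:WeakForm:TwoPressure:Trafo:3} yields \eqref{eq:WeakForm:TwoPressure:Trafo:3:NoSlip}. The only mild subtlety is the correct bookkeeping of transpositions in Step~2 (in particular verifying that the paper's conventions for $\nabla_y$ and $\partial_y$ make $\An:\nabla_y \partial_t\psin$ equal to $\Jn\operatorname{tr}(\Pnm\partial_t\Pn)$), but once the conventions are fixed this is a direct computation.
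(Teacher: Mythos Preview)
Your proof is correct and follows essentially the same route as the paper: both use the product rule for $\div_y(\An\partial_t\psin)$ together with the Piola identity $\div_y(\An)=0$ and the Jacobi formula to obtain $\div_y(\hvGn)=\partial_t\Jn$, and then identify $\int_{\Yp}\partial_t\Jn\dy$ with $\tfrac{\dd}{\dt}|\Yptx|$. You are simply a bit more explicit in Step~1 (deriving $\hvGn=\An\partial_t\psin$ from the no-slip hypothesis) and in Step~3 (invoking the change-of-variables formula for $\Theta$), both of which the paper leaves implicit.
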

\begin{proof}
The Jacobi formula states that almost everywhere
\begin{align*}
	\frac{\dd}{\dt} \det (A(t)) 
	=\operatorname{tr}(\operatorname{Adj}(A(t)) \partial_t A(t))
	=\det(A(t)) A^{-1}(t) : \partial_t A^\top(t)
\end{align*}
for every $A \in W^{1,\infty}(0,T)^{n \times n}$.
With the Leibniz rule, the Jacobi formula applied to $\partial_y \psin$ and the Piola identity, we infer 
\begin{align*}
	\div_y&(\An\txy \partial_t \psin\txy) 
	\\&= \An\txy : \nabla \partial_t \psin\txy + \div_y(\An\txy)\partial_t \psin\txy 
	\\
	&= \partial_t \Jn\txy + 0 \partial_t \psin\txy = \partial_t \Jn\txy \,.
\end{align*}
Hence, we obtain
\begin{align*}
	\intYp \div_y (\An\txy \hvGe\txy) \dy &= \intYp \partial_t \Jn\txy \dy 
	=\partial_t \intYp\Jn\txy \dy 
	\\
	&= \frac{\dd}{\dt} \Theta\tx.
\end{align*} 
\end{proof}
As consequence of Lemma \ref{lem:NoSlipSimplification}, we can simplify the right-hand side of \eqref{eq:DarcyMemory:Trafo:2} by
\begin{align}\label{eq:DarcyMemory:Trafo:2:NoSlip}
\div_x ( \hv\tx) &= -\frac{\dd}{\dt} \Theta\tx &&\textrm{in } (0,T) \times \Omega \,.
\end{align}

\section{Back-transformation of the two-pressure Stokes system}\label{sec:BackTrafo}
In the last step, we transform the two-pressure Stokes equations \eqref{eq:Strong:two-pressure:Trafo}, the cell problems and the cell problems \eqref{eq:Strong:two-pressure:Trafo:CellProb:A} and \eqref{eq:Strong:two-pressure:Trafo:CellProb:A} back to the actual moving cell domains.
We separate again the macro- and microscopic variable, which leads to the Darcy law with memory \eqref{eq:Strong:DarcyMemory}. We note that a-priori it is even from a formal point of view not clear that $\hv = v=\intYptx\vn\dy$, due to the transformation coefficient $\Pnm$ in $\hvn = \An\vn = \Jn \Pnm \vn$. Nevertheless, we can employ the microscopic incompressibility condition in order to identify $\hv$ with $v$. Moreover, we show that $\hat{K} = K$.

Let
\begin{align*}
\vn\txy &\coloneqq \Anm(t, x,\psinm\txy) \hvn(t,x, \psinm\txy) \, , \\
\hvn\txy & = \An(t, x,y) \vn(t,x, \psin\txy) \, .
\end{align*}
This choice is motivated by Lemma \ref{lem:TwoScaleEquiv}, since it gives
\begin{align}\label{eq:EquivalenceConvergence}
\ve \tsw{2,2} \vn \textrm{ if and only if } \hve \tsw{2,2} \hvn \, .
\end{align}

We use this substitution in the two-pressure Stokes equations \eqref{eq:Strong:two-pressure:Trafo} with the version \eqref{eq:Strong:two-pressure:Trafo:1:PressureAdjusted} for the momentum equation. 
The resulting equation can be transformed to the moving domain by means of $\psin$ similarly to the transformation of the $\e$-scaled Stokes equations leading to \eqref{eq:Strong:two-pressure}.
However, one has to be careful with the transformation of the pressure terms and the macroscopic divergence condition. 
The macroscopic pressure term in \eqref{eq:Strong:two-pressure:Trafo} has the coefficient $\PnmT$, which does not cancel in the back-transformation since there is no $y$-derivate.
This can be circumvented by the substitution of \eqref{eq:Strong:two-pressure:Trafo} by \eqref{eq:Strong:two-pressure:Trafo:1:PressureAdjusted}, where this coefficient is cancelled by the substitution of the microscopic pressure $\hqO'$. For the weak form, one has to use analogously \eqref{eq:WeakForm:TwoPressure:Trafo:1:PressureAdjusted} instead of \eqref{eq:WeakForm:TwoPressure:Trafo:1}.

A similar problem arises in the back-transformation of the left-hand side of the macroscopic divergence condition, where the factor $\Pnm$ of $\An = \Jn \Pnm$ does not cancel since there is no $y$-derivative involved, i.e.~
\begin{align*}
\intYp \hvn \dy &= \intYp \An(t, x,y) \vn(t,x, \psin\txy) \dy
\\
&= \intYptx \Pnm(t, x, \psinm\txy) \vn\txy \dy \,.
\end{align*}
As for the pressure, we separate the microscopic oscillating part of the coefficient. But instead of shifting it to a microscopic term, we show that it cancels in this term due to the microscopic incompressibility of $\hvn$, i.e.~we rewrite $\hvn \txy = \An(t, x,y) \vn(t,x, \psin\txy)$ and use Lemma~\ref{lem:J0=A0}, which is given below, for $u\txy = \vn(t,x, \psin\txy)$ to deduce
\begin{align}
	\begin{aligned}\label{eq:hvn=vn}
	\intYp \hvn \dy &= \intYp \An(t,x,y) \vn(t,x, \psin\txy) \dy
\\&= \intYp \Jn(t,x,y) \vn(t,x, \psin\txy) \dy
= \intYptx \vn \dy \,.
	\end{aligned}
\end{align}

\begin{lemma}\label{lem:J0=A0} 
Let $u \in L^2(\Omega;H^1_{\Gamma\#}(\Yp;\R^d))$ with
\begin{align}\label{eq:MicroIncompressibility:RefrenceCoord}
		\div_y(\An\xy u\xy) \dy = 0
\end{align}
for a.e.~$x \in \Omega$.
Then,
\begin{align}\label{eq:intYA=intYJ}
\intYp \An\xy u\xy \dy = \intYp \Jn\xy u\xy \dy
\end{align}
for a.e.~$x \in \Omega$.
\end{lemma}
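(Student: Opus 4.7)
The plan is to prove the identity componentwise by testing the divergence-free condition \eqref{eq:MicroIncompressibility:RefrenceCoord} against the right periodic test function and then exploiting the algebraic identity $\Pn \An = \Jn \1$ (which holds because $\An = \operatorname{Adj}(\Pn)$). Specifically, for each $k \in \{1,\dots,d\}$ I would use the scalar function
\[
\phi_k(x,y) \coloneqq (\psin\xy)_k - y_k,
\]
which by assumption \ref{item:L3} on $\psin$ extends $Y$-periodically and lies in $L^\infty(\Omega; C^2_\#(\overline Y))$, so it is an admissible test function in $H^1_\#(Y)$ for a.e.\ $x \in \Omega$.

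The key step is an integration by parts on $\Yp$. Since $\An(x,\cdot) \in C^1$, $u \in H^1_{\Gamma\#}(\Yp;\R^d)$ and $\An u$ has zero normal trace on $\Gamma$ (because $u|_\Gamma = 0$), one has
\[
0 \;=\; \intYp \div_y(\An u)\, \phi_k \dy \;=\; -\intYp \An u \cdot \nabla_y \phi_k \dy \;+\; \text{(boundary terms)}.
\]
The boundary term on $\Gamma$ vanishes by the zero trace of $u$, and the contributions on the outer faces $\partial Y \cap \partial \Yp$ cancel pairwise because both $\An u$ (since $\partial_y \psin$ is $Y$-periodic and $u$ is periodic) and $\phi_k$ are $Y$-periodic while opposing outer normals are opposite. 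Only the volume term survives.

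Now I would compute the integrand algebraically. Writing $(\nabla_y \psin_k)_j = (\Pn)_{kj}$, we obtain
\[
\An u \cdot \nabla_y \phi_k \;=\; \An u \cdot \nabla_y \psin_k \;-\; (\An u)_k \;=\; (\Pn \An u)_k \;-\; (\An u)_k \;=\; \Jn u_k \;-\; (\An u)_k,
\]
using the cofactor identity $\Pn \An = \Jn \1$. Substituting back yields
\[
0 \;=\; -\intYp \bigl(\Jn u_k - (\An u)_k\bigr) \dy \qquad\text{for every } k,
\]
which is exactly \eqref{eq:intYA=intYJ}.

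The argument is essentially algebraic once the right test function is identified; the only delicate point is the justification of the integration by parts together with the cancellation of boundary terms, which requires the regularity of $\psin$ from assumption \ref{item:L1}, the periodicity of $\psin - y$ from \ref{item:L3}, and the boundary conditions encoded in the space $H^1_{\Gamma\#}(\Yp;\R^d)$. I expect no serious obstacle beyond carefully tracking these compatibility conditions.
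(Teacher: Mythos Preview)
Your proposal is correct and is essentially the same argument as the paper's: both test the incompressibility condition against the periodic scalar functions $(\psin)_k - y_k$ (the paper uses the opposite sign $(y-\psin)_i$), integrate by parts with the boundary terms vanishing by the zero trace on $\Gamma$ and the $Y$-periodicity, and then invoke the cofactor identity $\Pn\An=\Jn\1$ to conclude. The only cosmetic difference is that the paper first writes the algebraic decomposition $\An u = \Jn u + \partial_y(y-\psin)\,\An u$ and then shows the second summand has zero mean, whereas you start from the weak formulation of $\div_y(\An u)=0$ and arrive at the same cancellation.
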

\begin{proof}
For $\xi \in \R^d$, we note that
\begin{align}
	\begin{aligned}\label{eq:An:hwn=Jn:hwn+nabla}
		\An\xi &= \Jn \Pnm \xi
		=
		\Jn \xi + ( \1 -\Pn) \Jn\Pnm \xi
		= 
		\Jn \xi + \partial_y (y -\psin) \An \xi
		\\
		&=
		\Jn \xi + \left(\begin{array}{c}
			\nabla_y ((y -\psin)_1) \cdot \An \xi
			\\
			\vdots
			\\
			\nabla_y ((y -\psin)_d) \cdot \An \xi
		\end{array}\right).
	\end{aligned}
\end{align}
We set $\xi= u$ for $u \in L^2(\Omega;H^1_{\Gamma\#}(\Yp;\R^d))$ with $\div_y(\An\xy u\xy)= 0$. Then, we integrate the second summand on the right-hand side of \eqref{eq:An:hwn=Jn:hwn+nabla} over $\Yp$, subsequently integrate by parts and use the microscopic incompressibility condition \eqref{eq:MicroIncompressibility:RefrenceCoord}. This shows
\begin{align*}
	\begin{aligned}
		\intYp& 
		\nabla_y (y_i -\psin\xy_i) \cdot \An\xy u\xy\dy
		\\
		&=
		-\intYp 
		(y_i -\psin\xy_i) \cdot \div_y(\An\xy u\xy) \dy
		= 0
	\end{aligned}
\end{align*}
for every $i \in \{1,\dots, d\}$,
where the boundary integral of the integration by parts vanishes on $\Gamma$ since $\hvn$ is zero and vanishes on $\partial Y \cap \partial \Yp$ since $y-\psin$, $\An$ and $u$ are $Y$-periodic.
Therefore, the second summand on the right-hand side of \eqref{eq:An:hwn=Jn:hwn+nabla} has mean value zero and vanishes after integrating over $\Yp$, which yields \eqref{eq:intYA=intYJ}.
\end{proof}

The strong form of the back-transformed two-pressure Stokes equations is given by
\begin{subequations}\label{eq:Strong:two-pressure}
\begin{align}
\partial_t \vn - \mu \Delta_{yy}(\vn) +\nabla_x p + \nabla_y p_1 &= f && \textrm{for } \tx \in (0,T) \times \Omega \, , y\in \Yptx \, ,
\\
\div_y (\vn) &= 0 &&\textrm{for } \tx \in (0,T) \times \Omega \, , y\in \Yptx \, ,
\\
\div_x \left( \int_{\Yptx} \vn \dy \right) = - \int_{\Yptx} \div_y (&\vGe) \dy && \textrm{for } \tx \in (0,T) \times \Omega \, ,
\\
\vn &= 0 && \textrm{for } \tx \in (0,T) \times \Omega \, , y\in \Gtx \, ,
\\
p &= \pb &&\textrm{on } (0,T) \times \Omega \, ,
\\
y &\mapsto\vn, \, p_1 &&Y\textrm{-periodic} \, ,
\\
\vn &= \vn^\init &&\textrm{for } x \in \Omega \, , y\in \Yp(0,x) \, .
\end{align}
\end{subequations}
Similarly, one can transform-back the weak form of the two-pressure Stokes equations, which leads to:
Find $(\vn, q, \qO) \in L^2((0,T) \times H^1_{\#\Gtx}(\Yptx; \R^d)) \times L^2(0,T;H^1_0(\Omega)) \times L^2((0,T) \times \Omega;L^2_0(\Yptx))$ with $\partial_t \vn \in L^2(\{\txy \in (0,T) \times \Omega \times Y\mid y\in \Yptx \};\R^d)$ such that
\begin{subequations}\label{eq:WeakForm:TwoPressure}
	\begin{align}
		\begin{aligned}\label{eq:WeakForm:TwoPressure:1}
			&\intTOYptx \partial_t \vn \cdot \varphi +
			 \mu \nabla_y \vn : \nabla_y \varphi + \nabla_x q \cdot \varphi -\qO \div_y(\varphi) \dyxt 
			\\
			&=
			\intTOYptx f \cdot \varphi - (\nabla_x \pb + \nabla_y \pbO ) \cdot \varphi \dxt \, ,
		\end{aligned}
		\\
		\begin{aligned}\label{eq:WeakForm:TwoPressure:2}
			\intTOYptx \phi_1 \div_y(\vn) \dxt &= 0 \, ,
		\end{aligned}
		\\
		\begin{aligned}\label{eq:WeakForm:TwoPressure:3}
			-\intTO \nabla_x \phi \intYptx\vn \dy \dxt &= - \intTOYp \phi \div_y(\vGn) \dxt \, ,
		\end{aligned}
		\\
		\begin{aligned}\label{eq:WeakForm:TwoPressure:4}
			\vn(0) &= \vn^\init
		\end{aligned}
	\end{align}
\end{subequations}
for all $(\varphi, \phi, \phi_1) \in H^1_{\Gtx}(\Yptx; \R^d)) \times L^2(0,T;H^1_0(\Omega)) \times L^2((0,T) \times \Omega;L^2_0(\Yptx))$.

The space $L^2((0,T) \times H^1_{\#\Gtx}(\Yptx; \R^d))$ has to be understood as the subspace of $L^2((0,T) \times H^1_{\#}(Y; \R^d))$ of functions that are $0$ in $\Ystx$ for a.e.~$\tx \in(0,T) \times \Omega$.
We understand $\partial_t \vn \in L^2(\{\txy \in (0,T) \times \Omega \times Y\mid y\in \Yptx \};\R^d)$ in the sense that $\partial_t \vn \in L^2((0,T)\times \Omega \times Y;\R^d)$ and $\partial_t \vn = 0$ in $\Ystx$ for a.e.~$\tx \in(0,T) \times \Omega$.

We note that \eqref{eq:WeakForm:TwoPressure:3} shows that $\div_x(\intYptx\vn \dy) \in L^2((0,T) \times \Omega)$.

By transforming the weak forms, one obtains the equivalence of the weak form \eqref{eq:WeakForm:TwoPressure} to the weak form \eqref{eq:WeakForm:TwoPressure:Trafo:2}--\eqref{eq:WeakForm:TwoPressure:Trafo:4} with \eqref{eq:WeakForm:TwoPressure:Trafo:1:PressureAdjusted} in the sense that $(\hvn, \hq, \hqO')$ solves \eqref{eq:WeakForm:TwoPressure:Trafo:2}--\eqref{eq:WeakForm:TwoPressure:Trafo:4} with \eqref{eq:WeakForm:TwoPressure:Trafo:1:PressureAdjusted} if and only if ($\vn, q, \qO)$ solves \eqref{eq:WeakForm:TwoPressure}, where 
\begin{align*}
&\hvn\txy = \An\txy \vn(t,x,y,\psin\txy) \, ,
\qquad 
\hq\tx = q\tx \, , \\
&\qquad \hqO' \txy = \qO(t,x,\psin\txy) \, .
\end{align*}
The latter identity is equivalent to 
\begin{align*}
\hqO\txy - (\psin\txy-y) \cdot \nabla_x \hat{q}\tx =\hqO'\txy = \qO(t,x,\psin\txy) 
\end{align*}
and, hence, $\hqO\txy = \qO(t,x,\psin\txy) + (\psin\txy-y) \cdot \nabla_x \hat{q}\tx$, which corresponds with back-transformation rules of the correctors of the gradients derived in \cite{AA23}.

Having the equivalence of the $\e$-scaled problems and the two-pressure Stokes problem, we can transfer also the two-scale convergence of $\hwe$, $\hve$ and $\hqe$ to $\we$, $\ve$ and $\qe$, respectively.
We note that we have to extend the functions in order to define the (two-scale) convergence.
We extend $\we$ and $\ve$ by zero as we did already for the transformed functions $\hwe, \hve$, i.e.~we denote by $\widetilde{\we}$ and $\widetilde{\hve}$ their extension by $0$ to $\Omega$. Since the transformation mapping $\psie$ is defined on all of $\Omega$, we have $\widetilde{\hwe}\tx= \widetilde{\we}(t,\psie\tx)$ and $\widetilde{\hve}\tx= \widetilde{\ve}(t,\psie\tx)$ for a.e.~$\tx \in (0,T) \times \Omega$. For the extension $\hQe$ of the pressure $\hqe$, we have used the extension by its cell-wise mean value. Consequently, in $\Oes$ this extension depends on the chosen $\psie$. In order to formulate the convergence result independently of $\psie$, we define analogously the extension $\Qe$ of $\qe$ to $\Omega$ by
\begin{align}\label{eq:Extension:Qe}
\Qe\tx \coloneqq 
\begin{cases}
	\qe\tx &\textrm{for } t \in (0,T), \, x\in \Oe(t) \, ,
	\\
	\frac{1}{|\Oe \cap \e (k + Y)|}\int\limits_{\e(k + \Yp)}\hqe\tx &\textrm{for } 
	t \in (0,T), \, x \in \Oes \cap \e (k + Y) \, \textrm{for } k \in \Ke \, .
\end{cases}
\end{align}
We note that ${\hQe}\tx= {\Qe}(t,\psie\tx)$ holds for $\tx \in (0,T) \times \Oe$ but not necessarily for $\tx \in (0,T) \times \Oes$. Nevertheless, this equivalence suffices in order to translate the convergence of $\widetilde{\hQe}$ to the convergence of $\widetilde{\Qe}$.

\begin{thm}
Let $(\we, \qe)$ be the solution of \eqref{eq:Weak:Stokes:Eps} and $\widetilde{\we}$ the extension of $\we$ by zero, $\Qe$ the extension of $\qe$ defined by \eqref{eq:Extension:Qe} and $\widetilde{\vn}$ the extension of $\vn$ by zero to $Y$. Then,
\begin{align}
\widetilde{\we} \tsw{2,2} \widetilde{v_0} \qquad \Qe \tsw{2,2} q \, ,
\end{align}
and, thus, in particular,
\begin{align}
\we \rightharpoonup v = \intYptx \vn \dy \quad \textrm{ in } L^2((0,T) \times \Omega) \, , \qquad \Qe \rightharpoonup q \textrm{ in } L^2((0,T) \times \Omega) \, ,
\end{align}
where
$(\vn, q, \qO)$ is the solution of \eqref{eq:WeakForm:TwoPressure}.
\end{thm}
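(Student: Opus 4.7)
The plan is to transfer the two-scale convergences for the substitute problem established in Theorem \ref{thm:TwoPressureStokes:Trafo} back to the original problem by means of the equivalence Lemma \ref{lem:TwoScaleEquiv} and the back-transformation rules derived in this section. Since $\psie(t,\cdot)$ is a diffeomorphism of $\overline\Omega$, the zero extensions satisfy $\widetilde{\hwe}\tx = \widetilde{\we}(t,\psie\tx)$ pointwise on $(0,T)\times\Omega$. Combining $\widetilde{\hwe}\tsw{2,2}\hvn$ with the substitution $\hvn\txy = \An\txy\vn(t,x,\psin\txy)$ (and the fact that $\vn$ vanishes in $Y\setminus\Yptx$) and applying Lemma \ref{lem:TwoScaleEquiv} in the converse direction yields the velocity statement $\widetilde{\we}\tsw{2,2}\widetilde{\vn}$.

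The pressure step is where the genuine work lies, because the identity $\Qe(t,\psie\tx) = \hQe\tx$ holds only on $(0,T)\times\Oe$; on the solid part the two extensions are obtained by cell-wise averaging over non-matching reference cells and generally disagree. The plan is to test $\Qe$ against an admissible oscillating function $\phi(t,x,x/\e)$, split the integral into contributions from $\Oe(t)$ and $\Oes(t)$, and compare with the analogous integral for $\hQe$. On $\Oe(t)$ the change of variables $x\mapsto\psie\tx$ converts the $\Qe$-integral into one involving $\hQe$ with a Jacobian factor $\Je$ that strongly two-scale converges to $\Jn$, so Theorem \ref{thm:TwoPressureStokes:Trafo} applies directly. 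On $\Oes(t)$ both $\Qe$ and $\hQe\circ\psiem$ are cell-wise constant; since the two-scale limit $\hq=q$ obtained in Theorem \ref{thm:TwoPressureStokes:Trafo} depends only on $(t,x)$, the microscopic discrepancy between the two piecewise constant extensions contributes only $o(1)$, and one concludes $\Qe\tsw{2,2}q$.

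Once the two two-scale convergences are in hand, the weak $L^2$-convergences
\begin{align*}
\widetilde{\we}\rightharpoonup\intYptx\vn\dy,\qquad \Qe\rightharpoonup q \quad \textrm{in } L^2((0,T)\times\Omega),
\end{align*}
follow immediately by specialising the two-scale test functions to functions independent of $y$. The identification of $v=\intYptx\vn\dy$ with the back-transformed limit velocity $\hv$ is precisely the content of the computation \eqref{eq:hvn=vn}: the microscopic incompressibility $\div_y\hvn=0$ (together with Lemma \ref{lem:J0=A0}) collapses the Piola factor $\Pnm$ that would otherwise appear when integrating $\hvn=\An(\vn\circ\psin)$ over $\Yp$.

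The principal obstacle is the pressure matching sketched above: one must show that the two a priori distinct extensions $\Qe$ and $\hQe\circ\psiem$ produce the same two-scale limit in spite of disagreeing on $\Oes(t)$. The argument rests on (i) the $y$-independence of $\hq$, (ii) the uniform $C^1$-closeness of $\psie$ to the identity supplied by \ref{item:B1}, which ensures that corresponding cells in the moving and reference geometries differ by an $O(\e)$ perturbation, and (iii) the fact that cell-wise averages are continuous with respect to such near-identity distortions in $L^2((0,T)\times\Omega)$. Granting this matching, all remaining assertions are immediate consequences of Theorem \ref{thm:TwoPressureStokes:Trafo} and the back-transformation identities of this section.
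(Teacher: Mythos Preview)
Your approach is essentially the same as the paper's: invoke Lemma~\ref{lem:TwoScaleEquiv} (equivalently \eqref{eq:EquivalenceConvergence}) for the velocity, and for the pressure match $\Qe$ with $\hQe(t,\psiem\tx)$ using that the limit $\hq=q$ is $y$-independent (the paper defers this step to \cite{JDE24}; you spell it out).

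One small slip to fix: the identity $\widetilde{\hwe}\tx = \widetilde{\we}(t,\psie\tx)$ is not correct as stated, since by definition $\hwe = \Ae\,(\we\circ\psie)$, not just $\we\circ\psie$. You therefore need one extra step: multiply the known convergence $\widetilde{\hwe}\tsw{2,2}\hvn$ by the strongly two-scale convergent factor $\Aem$ to obtain $\Aem\widetilde{\hwe}=\widetilde{\we}(t,\psie(t,\cdot))\tsw{2,2}\Anm\hvn$, and only then apply Lemma~\ref{lem:TwoScaleEquiv}. The resulting limit is $\Anm(t,x,\psinm\txy)\hvn(t,x,\psinm\txy)=\vn\txy$, which is precisely the back-transformation rule at the start of Section~\ref{sec:BackTrafo} and the content of \eqref{eq:EquivalenceConvergence}. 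With this correction the rest of your argument goes through unchanged.
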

\begin{proof}
Lemma \ref{lem:TwoScaleEquiv}, gives the two-scale convergence of $\we$ and $\hQe(t,\psie\tx)$ to $(\vn, q)$ with $\vn\txy = \hvn(t,x,\psinm\txy)$ and $q = \hat{q}$, where $(\hvn, \hq)$ are the two-scale limits of $\hwe$ and $\Qe$. Arguing similarly to \cite{JDE24}, one can deduce from the weak convergence of $\hQe(t,\psie\tx)$ to $q$ the weak convergence of $\Qe$ to $q$.
Since $(\hvn, \hq)$ is the first part of the solution of \eqref{eq:WeakForm:TwoPressure:Trafo:2}--\eqref{eq:WeakForm:TwoPressure:Trafo:4} with \eqref{eq:WeakForm:TwoPressure:Trafo:1:PressureAdjusted}, $(\vn, q)$ is the first part of the solution of \eqref{eq:WeakForm:TwoPressure}.
\end{proof}

In the case of a no-slip boundary condition at $\Ge(t)$, i.e.~in the case that $\vGe = \partial_t \psie(t,\psiem\tx)$ one can again simplify the right-hand side of the macroscopic divergence condition as in Lemma \ref{lem:NoSlipSimplification} to
\begin{align*}
\div_x \Big(\intYptx v_0\txy \dy \Big) &= -\frac{\dd}{\dt} \Theta\tx &&\textrm{in } (0,T) \times \Omega \,.
\end{align*} 

In order to separate the micro- and macroscopic variable in \eqref{eq:Strong:two-pressure}, we define 
\begin{align}
	v\tx \coloneqq \intYptx v_0 \txy \dy \, 
\end{align}
and note that \eqref{eq:hvn=vn} shows $\hv = v$.

Separating the micro- and macroscopic variable in \eqref{eq:Strong:two-pressure} for the case of the no-slip boundary condition leads to the Darcy law \eqref{eq:Strong:DarcyMemory}, with the permeability tensor \eqref{eq:def:K}, the initial value \eqref{eq:initialValue:Darcy} and the cell problems \eqref{eq:Strong:CellProblem:d} and \eqref{eq:Strong:CellProblem:init}.
For the general case, one gets the same result but with \eqref{eq:Strong:DarcyMemory:GenDirich} instead of \eqref{eq:Strong:DarcyMemory:2}
\begin{align}\label{eq:Strong:DarcyMemory:GenDirich}
\div_x v\tx = -\intYptx \div_y (\vGn\txy) \dy = - \int\limits_{\Gtx}\vGn\txy \cdot n \dy \, .
\end{align}

\appendix
\section{Two-scale convergence}
For the limit process, we use the notion of two-scale convergence, which was introduced in \cite{Ngu89, All92}, see also \cite{LNW02}.

Since we consider a time-dependent problem, we use the following with time parametrised version of two-scale convergence. In the following let $Y = (0,1)^d$.
\begin{defi}[Two-scale convergence]\label{def:two-scale}
Let $p\in [1,\infty)$, $q\in (1,\infty)$ and $\tfrac{1}{p} + \tfrac{1}{p'}=1$, $\tfrac{1}{q} + \tfrac{1}{q'}=1$. A sequence $\ue \in L^p(0,T;L^q(\Omega))$ two-scale converges to $\un \in L^p(0,T;L^q(\Omega \times Y))$ if
\begin{align*}
\lim\limits_{\e \to 0} \int\limits_{(0,T)} \intO \ue\tx \varphi \left(t, x, \frac{x}{\e}\right) \dxt = \intTOY \un\txy \varphi\txy \dyxt
\end{align*}
for all $\varphi \in L^{p'}(0,T;L^{q'}(\Omega;C_\#(Y)))$.
In this case, we write $\ue \tsw{p,q} \un$ or \linebreak $\ue\tx \tsw{p,q} \un\txy$ if we want to emphasize the dependence on the variables.

For $p, q \in (1,\infty)$, we say that $\ue$ strongly two-scale converges to $\un \in L^p(0,T;L^q(\Omega \times Y))$ if and only if $\ue \tsw{p,q} \un$ and $\lim\limits_{\e \to 0} \|\ue\|_{ L^p(0,T;L^q(\Omega))} =
\|\un\|_{L^p(0,T;L^q(\Omega \times Y))}$. In this case, we write $\ue \tss{p,q} \un$, or $\ue\tx \tss{p,q} \un\txy$ if we want to emphasize the dependence on the variables.
\end{defi}
Moreover, we have the following well-known compactness result.
\begin{lemma}[Two-scale compactness]
Let $p,q \in (1,\infty)$ and $\ue$ a bounded sequence in $ L^p(0,T;L^q(\Omega))$. Then, there exist a subsequence $\ue$ and $\un \in L^p(0,T;L^q(\Omega \times Y))$ such that for this subsequence $\ue \tsw{p,q} \un$. Moreover, if also $\e \nabla_x \hue$ is bounded in $L^p(0,T;L^q(\Omega)$, there exist a subsequence $\ue$ and $\un \in L^p(0,T;L^q(\Omega;W^{1,q}_\#(Y)))$ such that $\ue \tsw{p,q} \un$ and $\e \nabla \ue \tsw{p,q} \nabla_y \hun$.
\end{lemma}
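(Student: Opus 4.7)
The plan is to follow the classical Nguetseng--Allaire strategy, adapted to the time-parametrised setting with variable exponents $p,q \in (1,\infty)$, and then derive the gradient statement from the first one by integration by parts.

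For the first statement, I would argue by duality. Define a family of linear functionals on the separable Banach space $X \coloneqq L^{p'}(0,T;L^{q'}(\Omega;C_\#(Y)))$ by
\begin{align*}
T_\e(\varphi) \coloneqq \int_0^T \int_\Omega \ue\tx \, \varphi\!\left(t,x,\tfrac{x}{\e}\right) \dxt\,.
\end{align*}
By H\"older's inequality together with the standard fact that $\|\varphi(\cdot,\cdot,\cdot/\e)\|_{L^{p'}(0,T;L^{q'}(\Omega))}$ converges to $\|\varphi\|_{L^{p'}(0,T;L^{q'}(\Omega\times Y))}$ as $\e \to 0$ (via Riemann-sum arguments on admissible $\varphi$), the family $\{T_\e\}$ is uniformly bounded in $X^*$. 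Since $C_\#(Y)$ is separable and $p',q' \in (1,\infty)$, the space $X$ is separable, so sequential Banach--Alaoglu yields a subsequence with $T_\e \overset{*}{\rightharpoonup} T_0$ in $X^*$. Using the density of $X$ in $L^{p'}(0,T;L^{q'}(\Omega \times Y))$ and the upper bound obtained above, $T_0$ extends uniquely to a bounded functional on this larger space. The Riesz representation on the $\sigma$-finite measure space $(0,T) \times \Omega \times Y$ produces $\un \in L^p(0,T;L^q(\Omega \times Y))$ with $T_0(\varphi) = \int_0^T \int_\Omega \int_Y \un\, \varphi \dyxt$, which is precisely $\ue \tsw{p,q} \un$.

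For the second statement, apply the first part componentwise to $\e \nabla \ue$ to pass to a further subsequence and extract $\xi \in L^p(0,T;L^q(\Omega \times Y;\R^d))$ with $\e \nabla \ue \tsw{p,q} \xi$. It remains to identify $\xi = \nabla_y \un$ and show the $y$-periodicity and $W^{1,q}$-regularity of $\un$ in $y$. For $\varphi \in C^\infty_c((0,T) \times \Omega; C^\infty_\#(Y;\R^d))$ I would integrate by parts:
\begin{align*}
\int_0^T \!\int_\Omega \e \nabla \ue \cdot \varphi\!\left(t,x,\tfrac{x}{\e}\right) \dxt = -\int_0^T \!\int_\Omega \ue \left( \e\, \div_x \varphi + \div_y \varphi \right)\!\left(t,x,\tfrac{x}{\e}\right) \dxt\,.
\end{align*}
Passing to the two-scale limit (the $\e \div_x \varphi$ term vanishes because $\ue$ is bounded in $L^p L^q$ and $\div_x \varphi$ admissible) gives
\begin{align*}
\int_0^T \!\int_\Omega \!\int_Y \xi \cdot \varphi \dyxt = -\int_0^T \!\int_\Omega \!\int_Y \un \, \div_y \varphi \dyxt\,.
\end{align*}
This is exactly the weak formulation stating that $\un(t,x,\cdot)$ has $y$-gradient $\xi(t,x,\cdot)$ on $Y$; the $Y$-periodicity of $\varphi$ rules out boundary terms and forces $\un$ to be in $W^{1,q}_\#(Y)$ for a.e.~$\tx$, with $\nabla_y \un = \xi$.

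The main obstacle is the duality/density argument underpinning the first part, specifically verifying that the weak-$*$ limit $T_0$ extends to $L^{p'}(0,T;L^{q'}(\Omega\times Y))$ and is represented by an $L^p L^q$ function. This requires the density of the tensor-type test functions in the full Bochner--Lebesgue space, the Bochner-measurability of $\tx \mapsto \varphi(t,x,x/\e)$ (standard but technical for $\varphi$ of class $L^{p'}L^{q'}(C_\#)$), and separability to justify sequential Banach--Alaoglu. The derivative statement is then essentially free once the first is in hand, since integration by parts against periodic test functions directly produces the $y$-gradient structure of the limit.
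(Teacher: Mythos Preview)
The paper does not supply a proof of this lemma at all: it is stated as a ``well-known compactness result'' immediately after the definition of two-scale convergence, with only the references \cite{Ngu89, All92, LNW02} given for the notion itself. Your proposal is therefore not competing with a paper proof but filling in a gap the authors intentionally left.

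Your argument is the standard Nguetseng--Allaire route and is essentially correct. The first part (uniform boundedness of the functionals $T_\e$ on $L^{p'}(0,T;L^{q'}(\Omega;C_\#(Y)))$, sequential weak-$*$ compactness via separability, extension by density, and representation via the duality $(L^{p'}L^{q'})^* = L^pL^q$) is exactly how the original references proceed in the non-parametric $L^2$ case, and everything carries over for $p,q\in(1,\infty)$ with time as a parameter. The second part via integration by parts against periodic test functions is also the canonical identification argument. Two small points worth tightening: (i) the crucial estimate $\limsup_\e \|\varphi(\cdot,\cdot,\cdot/\e)\|_{L^{p'}L^{q'}} \le \|\varphi\|_{L^{p'}(0,T;L^{q'}(\Omega\times Y))}$ is what actually gives the bound needed to extend $T_0$ --- you invoke the norm \emph{convergence}, which is stronger than required and only holds for admissible $\varphi$, whereas the $\limsup$ inequality is what one proves for all $\varphi\in L^{p'}L^{q'}(C_\#)$; and (ii) the conclusion $\un\in L^p(0,T;L^q(\Omega;W^{1,q}_\#(Y)))$ from the distributional identity requires a short argument that the resulting $y$-gradient is genuinely periodic (equivalently, that no boundary term survives for \emph{all} periodic $\varphi$, not only those compactly supported in $Y$), which you allude to but do not spell out. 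Neither point is a real obstacle.
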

Due to the transformation of the equations in the reference coordinates, we obtain coefficients which strongly two-scale converge. For those, the following two product results becomes useful. They can be derived for instance using the unfolding operator.
\begin{lemma}
Let $p, q,q_1, p_2, q_2 , p,q \in [1, \infty)$ with $\tfrac{1}{p_1} + \tfrac{1}{p_2} = \tfrac{1}{p}$, $\tfrac{1}{q_1} + \tfrac{1}{q_2} = \tfrac{1}{q}$ and $\ue$ be a sequence in $L^{p_1}(0,T;L^{q_1}(\Omega))$ and $\un \in L^{p_1}(0,T;L^{q_1}(\Omega \times Y))$ such that $\ue \tss{p_1, q_1} \un$. 
Let $\ve$ be a sequence in $L^{p_2}(0,T;L^{q_2}(\Omega))$ and $\vn \in L^{p_2}(0,T;L^{q_2}(\Omega \times Y))$ such that $\ve \tsw{p_2,q_2} \vn$. Then, $\ue\ve \tsw{p,q} \un\vn$.

Moreover, if also $p,q \in (1, \infty)$ and $\ve \tss{p_2,q_2} \vn$ one has $\ve \tss{p,q} \vn$\,.
\end{lemma}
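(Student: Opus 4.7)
The plan is to transport the statement from two-scale convergence to ordinary weak/strong convergence in Bochner--Lebesgue spaces on $(0,T)\times\Omega\times Y$ by means of the periodic unfolding operator, and then invoke the classical product rule in Lebesgue spaces with matched H\"older exponents. Recall the unfolding operator $\mathcal{T}_\e\colon L^p(0,T;L^q(\Omega))\to L^p(0,T;L^q(\Omega\times Y))$, for which the well-known characterisations $\ue\tsw{p,q}\un \Longleftrightarrow \mathcal{T}_\e\ue\rightharpoonup\un$ in $L^p(0,T;L^q(\Omega\times Y))$ and $\ue\tss{p,q}\un \Longleftrightarrow \mathcal{T}_\e\ue\to\un$ in the same space hold for $p,q\in(1,\infty)$ (cf.~\cite{LNW02}); crucially, $\mathcal{T}_\e$ commutes with pointwise multiplication, $\mathcal{T}_\e(\ue\ve)=(\mathcal{T}_\e\ue)(\mathcal{T}_\e\ve)$ a.e. Thus the hypotheses translate into $\mathcal{T}_\e\ue\to\un$ strongly in $L^{p_1}(0,T;L^{q_1}(\Omega\times Y))$ and $\mathcal{T}_\e\ve\rightharpoonup\vn$ weakly in $L^{p_2}(0,T;L^{q_2}(\Omega\times Y))$, and it suffices to prove the corresponding product statement in ordinary Bochner--Lebesgue spaces.

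Writing $a_\e=\mathcal{T}_\e\ue$, $b_\e=\mathcal{T}_\e\ve$, $a=\un$, $b=\vn$ and $U=\Omega\times Y$, I would fix a test function $\varphi\in L^{p'}(0,T;L^{q'}(U))$ and decompose
\begin{equation*}
\int_0^T\!\int_U a_\e b_\e\,\varphi = \int_0^T\!\int_U (a_\e - a)\,b_\e\,\varphi + \int_0^T\!\int_U a\,b_\e\,\varphi.
\end{equation*}
For the first summand, the dual-index identities $1/p_1'=1/p'+1/p_2$ and $1/q_1'=1/q'+1/q_2$ (immediate consequences of the hypotheses on $p,q,p_i,q_i$) yield by H\"older's inequality that $b_\e\varphi\in L^{p_1'}(0,T;L^{q_1'}(U))$ with norm bounded uniformly by $\|b_\e\|_{L^{p_2}(L^{q_2})}\|\varphi\|_{L^{p'}(L^{q'})}$; one further H\"older pairing against $a_\e-a\in L^{p_1}(0,T;L^{q_1}(U))$ drives this term to zero by the strong convergence of $a_\e$. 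For the second summand, the analogous identities $1/p_2'=1/p_1+1/p'$ and $1/q_2'=1/q_1+1/q'$ place $a\,\varphi$ in $L^{p_2'}(0,T;L^{q_2'}(U))$, so the weak convergence of $b_\e$ to $b$ gives the limit $\int a\,b\,\varphi$. Since $\varphi$ was arbitrary, $a_\e b_\e \rightharpoonup ab$ weakly in $L^p(0,T;L^q(U))$, which after unfolding is the first claim $\ue\ve\tsw{p,q}\un\vn$.

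For the strong--strong case, assuming $\mathcal{T}_\e\ve\to\vn$ also strongly, I would instead estimate $a_\e b_\e-ab = (a_\e-a)b_\e + a(b_\e-b)$ directly in $L^p(0,T;L^q(U))$ by two H\"older inequalities with the same exponent arithmetic $1/p=1/p_1+1/p_2$, $1/q=1/q_1+1/q_2$, so that both terms tend to zero using boundedness of $b_\e$ and of $a$; consequently $\mathcal{T}_\e(\ue\ve)\to\un\vn$ strongly in $L^p(0,T;L^q(\Omega\times Y))$, i.e.~$\ue\ve\tss{p,q}\un\vn$.

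The main obstacle is purely bookkeeping: keeping the two layers of H\"older exponents (time and space) straight, and verifying that the unfolding characterisations of weak and strong two-scale convergence extend cleanly from the classical $L^q(\Omega)$ setting to the time-parametrised $L^p(0,T;L^q(\Omega))$ setting. The latter reduces to Fubini combined with the standard unfolding theory, so no further machinery is needed.
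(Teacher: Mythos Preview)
Your approach is correct and is precisely what the paper indicates: the paper does not give a detailed proof of this lemma but merely remarks that the product results ``can be derived for instance using the unfolding operator.'' Your argument---transporting to $L^p(0,T;L^q(\Omega\times Y))$ via $\mathcal{T}_\e$, using multiplicativity of unfolding, and then applying the standard weak--strong product rule with matched H\"older exponents---is exactly the intended route, and your exponent bookkeeping is in order.
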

In the case that the sequence $\ue$ is also essentially bounded, one can preserve the integrability.
\begin{lemma}
Let $p,q \in (1, \infty)$. Let $\ue$ be a bounded sequence in $L^1((0,T)\times \Omega)\cap L^\infty((0,T)\times \Omega)$ and $\un \in L^1((0,T)\times \Omega)\cap L^\infty((0,T)\times \Omega)$ such that $\ue \tss{2, 2} \un$. 
Let $\ve$ be a sequence in $L^{p}(0,T;L^{q}(\Omega))$ and $\vn \in L^{p}(0,T;L^{q}(\Omega \times Y))$ such that $\ve \tsw{p,q} \vn$ (resp.~$\ve \tss{p,q} \vn$). Then, $\ue\ve \tsw{p,q} \un\vn$ (resp.~$\ue\ve \tss{p,q} \un\vn$).
\end{lemma}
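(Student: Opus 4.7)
The natural vehicle is the periodic unfolding operator $T_\e$, for which (weak/strong) two-scale convergence on $(0,T)\times\Omega$ is equivalent to (weak/strong) convergence of the unfolded sequence in $L^p(0,T;L^q(\Omega\times Y))$, which is multiplicative in the sense $T_\e(\ue\ve)=T_\e\ue\cdot T_\e\ve$, and which preserves the $L^\infty$-bound $\|T_\e f\|_{L^\infty}\leq\|f\|_{L^\infty}$. The first step is to upgrade the hypothesis $\ue\tss{2,2}\un$: since $T_\e\ue\to\un$ strongly in $L^2((0,T)\times\Omega\times Y)$ with uniform bound $\|T_\e\ue\|_{L^\infty}\leq M$, the interpolation $\|g\|_{L^r}\leq\|g\|_{L^\infty}^{1-2/r}\|g\|_{L^2}^{2/r}$ together with the boundedness of the underlying domain yields strong convergence $T_\e\ue\to\un$ in every mixed-norm space $L^a(0,T;L^b(\Omega\times Y))$ with $a,b\in[1,\infty)$; passing to a further subsequence, one may additionally assume $T_\e\ue\to\un$ almost everywhere.

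For the weak case, fix a test function $\Phi\in L^{p'}(0,T;L^{q'}(\Omega\times Y))$ and decompose
\begin{equation*}
\int T_\e\ue\,T_\e\ve\,\Phi-\int\un\,\vn\,\Phi=\int(T_\e\ue-\un)\,T_\e\ve\,\Phi+\int\un\,(T_\e\ve-\vn)\,\Phi.
\end{equation*}
The second term tends to zero because $\un\Phi\in L^{p'}(L^{q'}(\Omega\times Y))$ (thanks to $\un\in L^\infty$) and $T_\e\ve\rightharpoonup\vn$ weakly in $L^p(L^q)$. For the first term, I would truncate $\Phi_k:=\Phi\,\1_{\{|\Phi|\leq k\}}\in L^\infty\cap L^{p'}(L^{q'})$ with $\|\Phi-\Phi_k\|_{L^{p'}(L^{q'})}\to 0$: the piece with $\Phi_k$ is bounded by $\|\Phi_k\|_{L^\infty}\|T_\e\ue-\un\|_{L^{p'}(L^{q'})}\|T_\e\ve\|_{L^p(L^q)}$, which vanishes as $\e\to 0$ by the first paragraph, whereas the truncation tail is dominated by $2M\,\|T_\e\ve\|_{L^p(L^q)}\,\|\Phi-\Phi_k\|_{L^{p'}(L^{q'})}$, which vanishes as $k\to\infty$.

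For the strong case, assume additionally $T_\e\ve\to\vn$ in norm. From
\begin{equation*}
T_\e\ue\,T_\e\ve-\un\,\vn=T_\e\ue\,(T_\e\ve-\vn)+(T_\e\ue-\un)\,\vn,
\end{equation*}
the first summand is controlled in $L^p(L^q)$ by $M\,\|T_\e\ve-\vn\|_{L^p(L^q)}\to 0$, while the second is handled by dominated convergence in the mixed-norm space using the pointwise bound $|(T_\e\ue-\un)\vn|\leq 2M|\vn|\in L^p(L^q(\Omega\times Y))$ and the a.e.\ convergence from the first paragraph. A subsequence--subsequence argument lifts the conclusion to the full sequence, and the multiplicativity of $T_\e$ rephrases the whole as $\ue\ve\tss{p,q}\un\vn$.

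The only delicate point is the cross term in the weak case, where $T_\e\ue-\un$ converges only in finite-exponent $L^r$ and $T_\e\ve\Phi$ need not be equi-integrable, so no direct strong--weak pairing applies. The truncation/density argument above is precisely what absorbs the uniform $L^\infty$-bound on $\ue$ into the integrability exponents of the test functional.
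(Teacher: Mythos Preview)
Your proof is correct and follows precisely the route the paper indicates: the paper does not give a detailed proof of this lemma but only remarks (just before the preceding product lemma) that both product results ``can be derived for instance using the unfolding operator.'' Your argument fleshes out this hint in full, using the standard multiplicativity $T_\e(\ue\ve)=T_\e\ue\,T_\e\ve$, the $L^\infty$-stability of $T_\e$, and the interpolation upgrade of the strong $L^2$-convergence of $T_\e\ue$ to all finite exponents; the truncation device for the cross term in the weak case and the dominated-convergence (plus subsequence--subsequence) argument in the strong case are both sound.
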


\subsection*{Transformation and two-scale convergence}
Following \cite{AA23}, we obtain the following result on two-scale convergence in the context of the microscopic coordinate transformation. 
\begin{lemma}[Bounds for the Jacobians]\label{lem:Est:Psie}
Let $\psie$ satisfy Assumption \ref{ass:psie}\ref{item:R1}--\ref{item:R2}, Assumption \ref{ass:psie}\ref{item:B1} for $l = 1$.
Then, there exists a constant $C$ such that
\begin{align*}
\| \Pe\|_{L^\infty((0,T) \times \Omega)} + \| \Pem\|_{L^\infty((0,T) \times \Omega)} + \| \Je\|_{L^\infty((0,T) \times \Omega)}&\leq C \, ,
\\
 \| \Jem\|_{L^\infty((0,T) \times \Omega)} + \| \Ae\|_{L^\infty((0,T) \times \Omega)} + \| \Aem\|_{L^\infty((0,T) \times \Omega)} &\leq C \, ,
\\
\|\partial_t \Pe\|_{L^\infty((0,T) \times \Omega)} + \|\partial_t \Pem\|_{L^\infty((0,T) \times \Omega)} + \|\partial_t \Je\|_{L^\infty((0,T) \times \Omega)} &\leq C \, ,
\\
\|\partial_t \Jem\|_{L^\infty((0,T) \times \Omega)} +\|\partial_t \Ae\|_{L^\infty((0,T) \times \Omega)} + \|\partial_t \Aem\|_{L^\infty((0,T) \times \Omega)} &\leq C \, .
\end{align*}
Assume that additionally Assumption~\ref{ass:psie}\ref{item:B1} is satisfied for $l = 2$.
Then, there exists a constant $C$ such that
\begin{align*}
\e \|\partial_x \Pe\|_{L^\infty((0,T) \times \Omega)} + \e \|\partial_x\Pem\|_{L^\infty((0,T) \times \Omega)} + \e\|\partial_x\Je\|_{L^\infty((0,T) \times \Omega)} &\leq C \, ,
\\
\e \|\partial_x \Jem\|_{L^\infty((0,T) \times \Omega)} + \e\|\partial_x\Ae\|_{L^\infty((0,T) \times \Omega)} + \e\|\partial_x \Aem\|_{L^\infty((0,T) \times \Omega)} &\leq C \, ,
\\
\e \|\partial_t \partial_x \Pe\|_{L^\infty((0,T) \times \Omega)} + \e \|\partial_t \partial_x\Pem\|_{L^\infty((0,T) \times \Omega)} + \e\|\partial_t \partial_x\Je\|_{L^\infty((0,T) \times \Omega)} &\leq C \, ,
\\
\|\e \partial_x \partial_t \Jem\|_{L^\infty((0,T) \times \Omega)} + \e\|\partial_x \partial_t \Ae\|_{L^\infty((0,T) \times \Omega)} + \e\|\partial_x \partial_t \Aem\|_{L^\infty((0,T) \times \Omega)} &\leq C \, .
\end{align*}
\end{lemma}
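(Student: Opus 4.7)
\bigskip

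\noindent\textbf{Proof plan.} The whole statement follows from routine chain and product rule manipulations together with the polynomial/rational dependence of $J_\e$, $A_\e$, $\Pem$, $A_\e^{-1}$ on the entries of $\Pe$, so I would organise it in four blocks: base bounds, inversion, time derivatives, spatial derivatives.

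First, the base bounds. Writing $\Pe = \mathbbm{1} + \partial_x(\psie-x)$ and applying Assumption~\ref{ass:psie}\ref{item:B1} with $l=1$, I get $\|\Pe\|_{L^\infty} \leq 1 + \|\partial_x(\psie-x)\|_{L^\infty} \leq C$ and, from the same assumption, $\|\partial_t\Pe\|_{L^\infty} = \|\partial_x\partial_t\psie\|_{L^\infty} \leq C$. Since $\Je = \det(\Pe)$ and $\Ae = \operatorname{Adj}(\Pe)$ are polynomials of degree $\leq d$ in the entries of $\Pe$, their $L^\infty$-bounds follow immediately.

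Second, inversion. By Assumption~\ref{ass:psie}\ref{item:B2}, $\Je \geq c_J > 0$, hence $\|\Jem\|_{L^\infty} \leq c_J^{-1}$. The identities $\Pem = \Je^{-1}\Ae$ and $\Aem = \Je^{-1}\Pe$ then give the bounds on $\Pem$ and $\Aem$. For the time derivatives I use Jacobi's formula $\partial_t \Je = \Je\,\operatorname{tr}(\Pem\partial_t\Pe)$ (bounded by what was already obtained), $\partial_t \Jem = -\Je^{-2}\partial_t \Je$ (bounded), and the matrix identity $\partial_t \Pem = -\Pem(\partial_t\Pe)\Pem$ (bounded), whence the product rule yields the bounds on $\partial_t \Ae = (\partial_t \Je)\Pem + \Je\partial_t \Pem$ and on $\partial_t\Aem = (\partial_t \Jem)\Pe + \Jem \partial_t\Pe$.

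Third, the $\e$-scaled spatial derivatives under the stronger hypothesis that \ref{item:B1} holds for $l=2$. The key input is $\e\|\partial_x \Pe\|_{L^\infty} = \e\|\partial_x\partial_x\psie\|_{L^\infty} \leq C$. Chain rule through the polynomial expressions for $\Je$ and $\Ae$ gives $\|\e\partial_x \Je\|_{L^\infty}, \|\e\partial_x \Ae\|_{L^\infty} \leq C$; the identity $\e\partial_x \Pem = -\Pem(\e\partial_x\Pe)\Pem$ and $\e\partial_x\Jem = -\Je^{-2}\e\partial_x\Je$, combined with $\Aem = \Jem \Pe$, propagate the bound to $\e\partial_x \Pem$, $\e\partial_x\Jem$ and $\e\partial_x \Aem$. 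Finally, the mixed bounds follow by exactly the same expansions applied to $\partial_t(\e\partial_x\,\cdot)$: using $\e\partial_x\partial_t\Pe = \e\partial_x\partial_x\partial_t\psie$ (bounded by \ref{item:B1} with $l=2$), together with $\partial_t\Pe$ and $\e\partial_x\Pe$ already controlled, the product rule applied to $\Je$, $\Jem$, $\Ae$ and $\Aem$ yields the remaining estimates.

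There is no real obstacle: every quantity is a smooth rational function of the entries of $\Pe$ with denominator $\Je$ bounded away from zero, so each estimate reduces to bookkeeping with the product rule. The only thing to keep track of is that every unscaled $\partial_x$-derivative carries the weight $\e$ and that Jacobi's formula and $\partial_t M^{-1} = -M^{-1}(\partial_t M)M^{-1}$ are applied consistently; each $\e$-weight is conserved through the expansions, which yields the claimed uniform-in-$\e$ bounds.
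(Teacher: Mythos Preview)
Your proof is correct and follows the same approach as the paper: expressing every quantity as a polynomial in $\partial_x\psie$, $\partial_x\partial_t\psie$, $\e\partial_x\partial_x\psie$, $\e\partial_x\partial_x\partial_t\psie$ and $\Jem$, all of which are bounded by assumption. The paper's proof is in fact only a one-line sketch invoking exactly this observation (with references to \cite{AA23}, \cite{NA23}, \cite{JDE24} for details), so your write-up is considerably more explicit; in particular you correctly make visible the use of Assumption~\ref{ass:psie}\ref{item:B2} to bound $\Jem$ from above, which the lemma statement does not list among its hypotheses but which the paper's own proof tacitly relies on.
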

\begin{proof}
Lemma \ref{lem:Est:Psie} can be shown by rewriting all quantities in terms of polynomials in $\partial_x \psie$, $\partial_x \partial_t \psie$, $\e \partial_x \partial_x \psie$, $\e \partial_x \partial_x \partial_t \psie$ and $\Jem$ which are bounded by the assumption. 
We refer to \cite{AA23}, \cite{NA23}, \cite{JDE24} for a more detailed proof.
\end{proof}

\begin{lemma}[Bounds under transformations]\label{lem:BoundsUnderTransformation}
Let $\psie$ satisfy Assumption \ref{ass:psie}\ref{item:R1}--\ref{item:R2}, Assumption \ref{ass:psie}\ref{item:B1} for $l = 1$.
Let $p,q \in [1,\infty)$ and $\ue$ be a sequence of functions and $\hue\tx \coloneqq \ue(t, \psie\tx)$. Then,
\begin{align*}
&\ue \in L^p(0,T;L^q(\Oe(t))) &&\textrm{ if and only if } &&\hue \in L^p(0,T;L^q(\Oe)) \, ,
\\
&\ue \in L^p(0,T;W^{1,q}(\Oe(t))) &&\textrm{ if and only if } &&\hue \in L^p(0,T;W^{1,q}(\Oe)) \, ,
\\
&\ue \in H^1(\Omega_\e^T) &&\textrm{ if and only if } &&\hue \in H^1((0,T) \times \Oe) \, .
\end{align*}
Moreover, in all three cases, $\ue$ is uniformly bounded if and only if $\hue$ is uniformly bounded.
\end{lemma}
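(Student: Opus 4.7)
The plan is to prove all three equivalences by the standard change of variables $x = \psie(t,y)$ combined with the chain rule, relying on the uniform bounds from Lemma \ref{lem:Est:Psie}. By Assumption \ref{ass:psie}\ref{item:R2}, $\psie(t,\cdot)$ is for each $t$ a $C^2$-diffeomorphism from $\overline{\Omega}$ onto itself mapping $\Oe$ onto $\Oe(t)$, which is precisely what is needed to use the substitution rule and to ensure that weak derivatives transform correctly.

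For the first equivalence, fix $t\in[0,T]$ and apply the change of variables to obtain
\begin{align*}
\int_{\Oe(t)} |\ue\tx|^q \dx = \int_{\Oe} |\hue(t,y)|^q \Je(t,y) \dy.
\end{align*}
Lemma \ref{lem:Est:Psie} gives $\|\Je\|_{L^\infty} \le C$, and Assumption \ref{ass:psie}\ref{item:B2} gives the lower bound $\Je \ge c_J$, so the $L^q$-norms on the left and right are equivalent with constants independent of $t$ and $\e$. Taking the $L^p$-norm in time yields the equivalence of $L^p(0,T;L^q(\Oe(t)))$ and $L^p(0,T;L^q(\Oe))$, together with the equivalence of uniform bounds.

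For the second equivalence, the chain rule gives $\nabla_y \hue(t,y) = \PeT(t,y) (\nabla_x \ue)(t,\psie(t,y))$. Since $\Pe$ and $\Pem$ are uniformly bounded in $L^\infty((0,T)\times\Omega)$ by Lemma \ref{lem:Est:Psie}, the Euclidean norms of $\nabla_y \hue$ and $(\nabla_x \ue)\circ \psie$ are pointwise equivalent (with $\e$- and $t$-independent constants). Combining this with the change of variables argument applied to $|\nabla_x \ue|^q$ yields the equivalence of the two $L^p(0,T;W^{1,q})$-spaces, including the uniform bounds. For the third equivalence we add a comparison of time derivatives: by the chain rule,
\begin{align*}
\partial_t \hue(t,y) = (\partial_t \ue)(t,\psie(t,y)) + (\nabla_x \ue)(t,\psie(t,y))\cdot \partial_t \psie(t,y),
\end{align*}
and $\partial_t \psie$ is uniformly bounded in $L^\infty$ by Assumption \ref{ass:psie}\ref{item:B1}. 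Together with the just-established equivalence for $\nabla_x\ue \circ \psie$ and $\nabla_y \hue$ in $L^2$, this expresses $\partial_t \hue$ in terms of quantities with controlled $L^2((0,T)\times \Oe)$-norms, and the change of variables turns the resulting bound into one on $(\partial_t \ue)\circ \psie$.

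The only subtle point, and the one I would handle most carefully, is the reverse direction, since a-priori bounds must also propagate from $\hue$ back to $\ue$. This is done by running the same argument with $\psiem(t,\cdot)$ in place of $\psie(t,\cdot)$: the required $L^\infty$-bounds on $\partial_y \psiem$, $\det(\partial_y\psiem)$ and $\partial_t \psiem = -\Pem\,(\partial_t \psie)\circ \psiem$ all follow from Lemma \ref{lem:Est:Psie} and the identity $\Pem = \Aem/\Jem$. With both directions established, the $\e$-independent constants in the resulting two-sided estimates yield the claimed equivalence of uniform bounds.
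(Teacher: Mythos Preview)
Your proposal is correct and follows exactly the approach indicated in the paper's one-line proof (``coordinate transformation and the uniform essential bounds for the Jacobians of $\psie$''), which you have simply spelled out in detail via the substitution formula, the chain rule, and the $L^\infty$-bounds on $\Pe$, $\Pem$, $\Je$, $\Jem$ and $\partial_t\psie$ from Lemma~\ref{lem:Est:Psie}. Your explicit handling of the reverse direction via $\psiem$ and the identity $\partial_t\psiem = -\Pem(\partial_t\psie)\circ\psiem$ is the natural elaboration of what the paper leaves implicit.
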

The space $L^p(0,T;L^q(\Oe(t)))$ can be understood as the measurable functions on $\Oe^T$ such that $\| \|u(\cdot)\|_{L^q(\Oe(t))}\|_{L^p(0,T)}$ is finite. The space has to be understood as the subset of $L^p(0,T;L^q(\Oe(t)))$ such that $u(t) \in W^{1,q}(\Oe(t))$ for a.e.~$t\in \Omega$ and $\partial_x u \in L^p(0,T;L^q(\Oe(t)))$.

\begin{proof}[Proof of \ref{lem:BoundsUnderTransformation}]
Lemma \ref{lem:BoundsUnderTransformation} follows directly from coordinate transformation and the uniform essential bounds for the Jacobians of $\psie$.
\end{proof}

For the Jacobians of the transformations, we obtain the following asymptotic behaviour:
\begin{lemma}[Two-scale convergence of the Jacobians]\label{lem:TwoScaleJacobians}
	Let $\psie$ satisfy Assumption \ref{ass:psie}\ref{item:R1}--\ref{item:R2}, Assumption \ref{ass:psie}\ref{item:B1} for $l = 1$ and Assumption \ref{ass:psie}\ref{item:A1}--\ref{item:A2}, \ref{ass:psie}\ref{item:A4}--\ref{item:A5}
	Then, for every $p \in (1,\infty)$,
	\begin{align*}
		&\Pe \tss{p,p} \Pn \,, \quad 
		&&\Pem \tss{p,p} \Pnm \,, \quad 
		&&\Je \tss{p,p} \Jn \,, \quad 
		&&\Jem \tss{p,p} \Jnm \,, \quad 
		\\
		&\Ae \tss{p,p} \An \,, \quad 
		&&\Aem \tss{p,p} \Anm \,, \quad
		&&\partial_t\Pe \tss{p,p} \partial_t \Pn \,, \quad 
		&&\partial_t\Pem \tss{p,p} \partial_t \Pnm \,, \quad 
		\\
		&\partial_t\Je \tss{p,p} \partial_t \Jn \,, \quad 
		&&\partial_t\Jem \tss{p,p} \partial_t \Jnm \,, \quad 
		&&\partial_t\Ae \tss{p,p} \partial_t \An \,, \quad 
		&&\partial_t\Aem \tss{p,p} \partial_t \Anm \,. \quad
	\end{align*}
	Assume that additionally Assumption~\ref{ass:psie}\ref{item:B1} is satisfied for $l = 2$ and \ref{ass:psie}\ref{item:A3},\ref{item:A6}. Then, one has additionally
	\begin{align*}
		&\e \partial_x\Pe \tss{p,p} \partial_y\Pn \,, \quad 
		&&\e \partial_x\Pem \tss{p,p} \partial_y\Pnm \,, \quad 
		&&\e \partial_x\Je \tss{p,p} \partial_y\Jn \,, \quad 
		\\
		&\e \partial_x\Jem \tss{p,p} \partial_y \Jnm \,, \quad 
		&&\e \partial_x\Ae \tss{p,p} \partial_y\An \,, \quad 
		&&\e \partial_x\Aem \tss{p,p} \partial_y\Anm \,, \quad
		\\
		&\e \partial_x\partial_t\Pe \tss{p,p} \partial_y\partial_t \Pn \,, \quad 
		&&\e \partial_x\partial_t\Pem \tss{p,p} \partial_y\partial_t \Pnm \,, \quad 
		&&\e \partial_x\partial_t\Je \tss{p,p} \partial_y\partial_t \Jn \,, \quad 
		\\
		&\e \partial_x\partial_t\Jem \tss{p,p} \partial_y \partial_t \Jnm \,, \quad 
		&&\e \partial_x\partial_t\Ae \tss{p,p} \partial_y\partial_t \An \,, \quad 
		&&\e \partial_x\partial_t\Aem \tss{p,p} \partial_y\partial_t \Anm \,. \quad
	\end{align*}
\end{lemma}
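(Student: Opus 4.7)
My plan is to reduce everything to two base convergences, namely $\Pe \tss{p,p} \Pn$ from Assumption \ref{ass:psie}\ref{item:A2} and $\partial_t \Pe \tss{p,p} \partial_t \Pn$ from \ref{item:A5}. These are stated in $L^2$, but the uniform $L^\infty$-bounds of Lemma \ref{lem:Est:Psie}, together with the remark following Assumption \ref{ass:psie}, upgrade them to strong two-scale convergence in $L^p$ for every $p \in (1,\infty)$. Every remaining quantity in the statement will then be rewritten as a polynomial expression in the entries of $\Pe$, $\partial_t\Pe$ and $\Jem$; once these algebraic identities are in place, iterated application of the strong two-scale product rule from the appendix (valid when one factor is uniformly $L^\infty$-bounded) yields all the listed convergences.

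Concretely, $\Je = \det(\Pe)$ and $\Ae = \operatorname{Adj}(\Pe)$ are polynomials in the entries of $\Pe$, so $\Je \tss{p,p} \Jn$ and $\Ae \tss{p,p} \An$ follow immediately from the product rule. For the reciprocal $\Jem$ I would pass through the unfolding operator $\mathcal{T}_\e$, under which strong two-scale convergence is equivalent to strong $L^p$-convergence on $\Omega\times Y$. Combining the uniform lower bound $\mathcal{T}_\e\Je \geq c_J$ coming from \ref{item:B2} with the strong convergence $\mathcal{T}_\e\Je \to \Jn$ in $L^p$ and the identity
\begin{align*}
\frac{1}{\mathcal{T}_\e \Je} - \frac{1}{\Jn} = \frac{\Jn - \mathcal{T}_\e\Je}{\mathcal{T}_\e\Je\,\Jn},
\end{align*}
dominated convergence gives $\mathcal{T}_\e(1/\Je) \to 1/\Jn$ in $L^p$, i.e.\ $\Jem \tss{p,p} \Jnm$. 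The identities $\Pem = \Jem\Ae$ and $\Aem = \Jem\Pe$ then supply the two remaining static convergences by one further product.

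For the time derivatives I would use Jacobi's formula $\partial_t \Je = \Ae : (\partial_t\Pe)^\top$ together with $\partial_t \Jem = -\Jem^2\,\partial_t\Je$, and the product expansions $\partial_t \Pem = \partial_t\Jem\,\Ae + \Jem\,\partial_t\Ae$ and $\partial_t\Aem = \partial_t\Jem\,\Pe + \Jem\,\partial_t\Pe$, where $\partial_t\Ae$ is itself polynomial in entries of $\Pe$ and $\partial_t\Pe$. Each of these is a polynomial in quantities that already strongly two-scale converge and are uniformly $L^\infty$-bounded, so one more invocation of the product rule closes the first half of the lemma. The second half, assuming \ref{item:B1} with $l=2$ and \ref{item:A3},\ref{item:A6}, is handled by the same blueprint with the additional base convergences $\e\partial_x\Pe \tss{p,p} \partial_y\Pn$ and $\e\partial_x\partial_t\Pe \tss{p,p} \partial_y\partial_t\Pn$, and chain-rule identities such as $\e\partial_x \Je = \Ae : (\e\partial_x \Pe)^\top$, $\e\partial_x \Jem = -\Jem^2(\e\partial_x\Je)$, $\e\partial_x\Pem = (\e\partial_x\Jem)\Ae + \Jem(\e\partial_x\Ae)$, $\e\partial_x\Aem = (\e\partial_x\Jem)\Pe + \Jem(\e\partial_x\Pe)$, and their $\partial_t$-differentiated analogues.

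The only genuinely non-algebraic step is the passage from $\Je \tss{p,p} \Jn$ to $\Jem \tss{p,p} \Jnm$; every other convergence is bookkeeping with polynomial identities and the product rule for strong two-scale convergence of uniformly $L^\infty$-bounded sequences. I therefore expect this inversion step to be the main obstacle, since it is precisely the place where the uniform lower bound \ref{item:B2} on $\Je$ is essential: without it the reciprocal would not preserve two-scale convergence.
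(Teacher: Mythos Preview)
Your proposal is correct and follows precisely the strategy the paper relies on: the paper's own proof merely cites \cite{AA23}, \cite{NA23}, \cite{JDE24}, whose common technique---already foreshadowed in the proof of Lemma~\ref{lem:Est:Psie}---is exactly to express each quantity as a polynomial in $\Pe$, $\partial_t\Pe$, $\e\partial_x\Pe$, $\e\partial_x\partial_t\Pe$ and $\Jem$, and then combine the assumed strong two-scale convergences with the uniform $L^\infty$-bounds via the product rule. Your identification of the inversion step $\Je \mapsto \Jem$ (using the lower bound from \ref{item:B2}) as the only non-algebraic ingredient is spot on; everything else is indeed bookkeeping.
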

\begin{proof}
The first part was shown for the time-independent case in \cite{AA23}, the time-dependent case can be deduced by the same argumentation and is given also in \cite{NA23}. The second part becomes relevant for the Stokes problem and is presented for the time-independent case in \cite{JDE24}, while the time-dependent case can be deduced by the same argumentation.
\end{proof}
Moreover, we can translate the two-scale convergence between the transformed and untransformed setting as follows.
\begin{lemma}[Transformation and two-scale convergence]\label{lem:TwoScaleEquiv}
Let $\psie$ satisfy Assumption \ref{ass:psie}\ref{item:R1}--\ref{item:R2}, Assumption \ref{ass:psie}\ref{item:B1} for $l = 1$ and let $\psin$ satisfy Assumption \ref{ass:psie}\ref{item:L1}--\ref{item:L3} such that the convergence of Assumption \ref{ass:psie}\ref{item:A1}--\ref{item:A2} is satisfied.
Let $p,q \in (1,\infty)$ and $\ue \in L^p(0,T;L^q(\Omega))$ be a sequence of functions, $\hue\tx \coloneqq \ue(t, \psie\tx)$ and $\un \in L^p(0,T;L^q(\Omega \times Y))$. Then,
\begin{align*}
\ue &\tsw{p,q} \un &&\textrm{ if and only if } && \hue \tsw{p,q} \hun \, ,
\\
\ue &\tss{p,q} \un &&\textrm{ if and only if } && \hue \tss{p,q} \hun \, ,
\end{align*}
where $\hun\txy = \un(t,x,\psin\txy)$.

Moreover, if $\ue \in L^p(0,T;W^{1,q}(\Omega))$, $\un \in L^p(0,T;
W^{1,q}(\Omega))$, \linebreak $u_1 \in L^p(0,T;
L^q(\Omega;W^{1,q}_\#(Y)))$ one has
\begin{align*}
&\nabla \ue \tsw{p,q} \nabla_x \un + \nabla_y u_1 &&\textrm{ if and only if } && \nabla \hue \tsw{p,q} \hun \nabla_x \hun + \nabla_y \hat{u}_1 \, 
\end{align*}
for $\un =\hun$ and ${\hat{u}_1\txy = u_1(t,x,\psin\txy) + \nabla_x \un\tx \cdot (\psin\txy-y)}$\,.
If $\ue \in L^p(0,T;W^{1,q}(\Omega))$, $\un \in L^p(0,T;
L^q(\Omega;W^{1,q}_\#(Y)))$ one has
\begin{align*}
&\e \nabla \ue \tsw{p,q} \nabla_y \un &&\textrm{ if and only if } && \e \nabla \hue \tsw{p,q} \nabla_y \hun \, 
\end{align*}
for $\hun\txy = \un(t,x,\psin\txy)$\,.
\end{lemma}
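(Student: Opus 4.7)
The plan is to reduce the assertion to the corresponding time-independent transformation result of \cite{AA23} by a slicing argument in time and unpacking the definition of two-scale convergence. First I fix a test function $\varphi \in L^{p'}(0,T;L^{q'}(\Omega;C_\#(Y)))$ and apply the change of variable $z = \psie(t,x)$ pointwise in $t$, giving
\[
\int_\Omega \hue\tx\, \varphi(t,x,x/\e) \dx = \int_\Omega \ue(t,z)\, \varphi\bigl(t,\psiem(t,z),\psiem(t,z)/\e\bigr)\, \Jem(t,\psiem(t,z)) \dd z.
\]
By Lemma~\ref{lem:Est:Psie} and Lemma~\ref{lem:TwoScaleJacobians} the factor $\Jem\circ\psiem$ is uniformly bounded in $L^\infty$ and strongly two-scale converges to $\Jnm(t,z,y)$. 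Moreover, Assumption~\ref{ass:psie}\ref{item:A1} applied to the inverse transformation, combined with the periodic structure from Assumption~\ref{ass:psie}\ref{item:L3}, identifies the microscopic limit of $\psiem(t,z)/\e$ with $\psinm(t,z,y)$. Hence the transformed test function is admissible with two-scale limit $\varphi(t,z,\psinm(t,z,y))\,\Jnm(t,z,y)$, and the inner change of variable $y\mapsto\psin(t,z,y)$ (with Jacobian $\Jn$) converts the right-hand side into the standard integral of $\un(t,z,y)\varphi(t,z,y)$. This simultaneously identifies $\hun\txy = \un(t,x,\psin\txy)$ and proves one direction of the weak equivalence; the reverse direction is symmetric with $\psie$ and $\psiem$ interchanged. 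Strong two-scale convergence then follows from the $L^p(L^q)$-norm identity under this change of variable, combined with the strong two-scale convergence of the Jacobian.

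For the gradient statements I invoke the chain rule $\nabla\hue = \PeT\,(\nabla\ue)\circ\psie$ and appeal to the scalar case just proved, using $\Pe \tss{p,p} \Pn$ from Lemma~\ref{lem:TwoScaleJacobians} to carry the transformation matrix into the limit. The identity $\nabla_y\hun = \PnT\,(\nabla_y\un)\circ\psin$, obtained from the chain rule applied to $\hun(t,x,y) = \un(t,x,\psin\txy)$, matches the claimed limit in the $\e\nabla\ue$ case. For the full gradient statement, the corrector $\hat u_1\txy = u_1(t,x,\psin) + \nabla_x\un\tx\cdot(\psin-y)$ arises from splitting $\partial_x\psie = \1 + \partial_x(\psie-x)$ and invoking the two-scale limit of Assumption~\ref{ass:psie}\ref{item:A1}: the order-one contribution reproduces $(\nabla_x\un)\circ\psin$, while the $\e$-scaled oscillatory part contributes the term $(\psin-y)\cdot\nabla_x\un$ via the $Y$-periodic displacement.

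The main obstacle is the identification of the two-scale limit of the transformed test function $\varphi(t,\psiem(t,\cdot),\psiem(t,\cdot)/\e)\,\Jem(t,\psiem(t,\cdot))$. The delicate point is that the microscopic argument $\psiem(t,z)/\e$ has a two-scale meaning only modulo $Y$-periodicity, so Assumption~\ref{ass:psie}\ref{item:L3} on periodicity of the displacement is essential in order that $\varphi(t,z,\psinm(t,z,y))$ be well defined as a $Y$-periodic function of $y$; without this hypothesis no limit identity could even be stated. Once admissibility of the transformed test function is granted, all remaining steps reduce to routine combinations of strong two-scale convergence of coefficients against weak two-scale convergence of the unknowns.
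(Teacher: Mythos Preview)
Your proposal is correct and follows essentially the same route as the paper: the paper's own proof merely cites \cite{AA23} for the time-independent result and remarks that the time-dependent case is obtained by the same argumentation, which is precisely your reduction by slicing in time. You in fact go further than the paper by sketching the actual mechanism (change of variables, strong two-scale convergence of the Jacobians from Lemma~\ref{lem:TwoScaleJacobians}, and the chain rule for the gradient statements), and you correctly isolate the only genuinely delicate step---the identification of the two-scale limit of the transformed test function, for which the periodicity hypothesis \ref{item:L3} is indispensable.
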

\begin{proof}
For the proof of the time-independent case, see \cite{AA23}. The time-dependent case can be deduced by the same argumentation.
\end{proof}

\begin{rem}\label{rem:two-scale-trafo:FixedPointTime}
Lemma \ref{lem:Est:Psie}--Lemma~\ref{lem:TwoScaleEquiv} are formulated for the two-scale convergence with the time as parameter. Since Assumption \ref{ass:psie} provides also a uniform control of the time derivative of $\psie$, the two-scale convergence of $\psie$ and its spatial derivatives holds also for every fixed point in time, in particular for the initial time. Thus, one can also deduce Lemma \ref{lem:Est:Psie}--Lemma~\ref{lem:TwoScaleEquiv} for a fixed point in time, which becomes useful for the investigation of the initial values.
\end{rem}
\bibliography{InstatStokes}
\end{document}